\journalname{}
\newtheorem{thm}{Theorem}[section]
\newtheorem{lem}{Lemma}[section]
\newtheorem{rem}{Remark}[section]
\newtheorem{fact}{Fact}[section]
\newtheorem{algo}{Algorithm}[section]
\newtheorem{prob}{Problem}[section]
\newtheorem{asmp}{Assumption}[section]
\def\be{\begin{eqnarray}}
\def\ee{\end{eqnarray}}
\def\ben{\begin{eqnarray*}}
\def\een{\end{eqnarray*}}
\def\ba{\begin{array}}
\def\ea{\end{array}}
\def\bi{\begin{itemize}}
\def\ei{\end{itemize}}
\def\cL{{\mathcal L}}
\def\cN{{\mathcal N}}
\def\cO{{\mathcal O}}
\def\bR{{\mathbb R}}
\def\prox{{\rm Prox}}
\def\[{\begin{equation}}
\def\]{\end{equation}}
\newcommand{\D}{\Delta}
\newcommand{\R}{\mathbb R}
\newcommand{\lr}[1]{\left\langle #1\right\rangle}
\begin{document}

\title{Golden ratio primal-dual algorithm with linesearch}
\titlerunning{Golden ratio primal-dual algorithm with linesearch}
\author{Xiaokai Chang$^{1}$
\and  Junfeng Yang$^2$
\and Hongchao Zhang$^3$
}

\institute{1 \ \   School of Science, Lanzhou University of Technology, Lanzhou, Gansu, P. R. China. Email: xkchang@lut.cn. \\
2  \ \  Department of Mathematics, Nanjing University, Nanjing, P. R. China. Email: jfyang@nju.edu.cn. \\
3 \ \    Department of Mathematics,
Louisiana State University, Baton Rouge, LA 70803-4918. Phone (225) 578-1982. Fax (225) 578-4276.
Email: hozhang@math.lsu.edu. http://www.math.lsu.edu/$\sim$hozhang.
}

\date{Received: date / Accepted: date}

\maketitle

\begin{abstract}
Golden ratio primal-dual algorithm (GRPDA) is a new variant of the classical Arrow-Hurwicz method for solving structured convex optimization problem, in which the objective function consists of the sum of two closed proper convex functions, one of which involves a composition with a linear transform.
The same as the Arrow-Hurwicz method and the popular primal-dual algorithm (PDA) of Chambolle and Pock, GRPDA is full-splitting in the sense that it does not rely on solving any subproblems or linear system of equations iteratively.
Compared with PDA, an important feature of GRPDA is that it permits larger primal and dual stepsizes.
However, the stepsize condition of standard GRPDA requires that the spectral norm of the linear transform is known, which can be difficult to obtain in some applications. Furthermore,  constant stepsizes prescribed by the stepsize condition are usually overconservative in practice.

In this paper, we propose a linesearch strategy for GRPDA, which not only does not require the spectral norm of the linear transform  but also allows adaptive and  potentially much larger stepsizes. Within each linesearch step, only the dual variable needs to be updated, and it is thus quite cheap and does not require any extra matrix-vector multiplications for many special yet important applications, e.g., regularized least squares problem. Global convergence and ${\cal O}(1/N)$ ergodic convergence rate results measured by the primal-dual gap function are established, where $N$ denotes the iteration counter. When one of the component functions is strongly convex, faster ${\cal O}(1/N^2)$ ergodic convergence rate results are established by adaptively choosing some algorithmic parameters. Moreover, when both component functions are strongly convex, nonergodic linear converge results are established.
Numerical experiments on matrix game and LASSO problems illustrate the effectiveness of the proposed linesearch strategy.
\end{abstract}
\keywords{Saddle point problem \and golden ratio primal-dual algorithm \and acceleration \and linesearch \and ergodic convergence rate \and spectral norm}
\subclass{49M29 \and  65K10 \and 65Y20 \and 90C25 }

\section{Introduction}
\label{sec:intro}
Let $\R^p$ and $\R^q$ be finite-dimensional Euclidean spaces, each endowed with an inner product and the induced norm denoted by $\langle\cdot, \cdot\rangle$ and $\|\cdot\| =\sqrt{\langle\cdot,\cdot\rangle}$, respectively. Let $f :\R^p \rightarrow(-\infty, +\infty]$ and $g :\R^q \rightarrow(-\infty, +\infty]$ be extended real-valued closed proper convex functions, $K\in\R^{p\times q}$ be a linear transform from $\R^q$ to $\R^p$. Denote the Legendre-Fenchel conjugate of $f$ by $f^*$, i.e., $f^*(y) = \sup_{x\in\R^p}\{ \langle y, x\rangle - f(x)\}$, $y\in\R^p$.
In this paper, we focus on the following saddle point problem with a bilinear coupling term
\be\label{saddle_point}
\min_{x\in \R^q}\max_{y\in \R^p} ~~\mathcal{L}(x,y):=g(x)+\langle Kx,y\rangle-f^*(y).
\ee
Since the biconjugate of $f$ is itself, i.e., $(f^*)^* = f$, see  \cite{Rockafellar1970Convex},   problem (\ref{saddle_point}) reduces to the following primal minimization problem
\be\label{primal}
\min_{x\in \R^q} g(x) + f(Kx).
\ee
On the other hand, by swapping the ``min" and the ``max",  problem \eqref{saddle_point} can be transformed to the following dual maximization problem
\be\label{dual}
\max_{y\in \R^p} -f^*(y) - g^*(-K^\top y),
\ee
where $K^\top$ denotes the matrix transpose or adjoint operator of $K$. Under regularity conditions, e.g., Assumption \ref{asmp-1} given below, strong duality holds between \eqref{primal} and \eqref{dual}.

Problems \eqref{saddle_point}-\eqref{dual} naturally arise from abundant interesting applications, including signal and image processing, machine learning, statistics, mechanics and economics, and so on, see, e.g., \cite{Chambolle2011A,Bouwmans2016Handbook,Yang2011Alternating,Hayden2013A,Bertsekas1982Projection}
and the references therein.
To solve \eqref{saddle_point}-\eqref{dual} simultaneously, popular choices include the well known alternating direction method of multipliers (ADMM) \cite{GM75,GabM76},
the primal-dual algorithm (PDA) of Chambolle and Pock \cite{Chambolle2011A,He2012Convergence,Pock2011Diagonal}, and their accelerated and generalized variants \cite{Liu2018Acceleration,Malitsky2018A}.
The focus of this paper is  primal-dual type full-splitting algorithms\footnote{An algorithm is said to be full-splitting if it does not rely on solving any subproblems or linear system of equations iteratively and the main computations per iteration are matrix-vector multiplications and the evaluations of proximal operators.}  for solving \eqref{saddle_point}-\eqref{dual}.
We emphasize that the literature on numerical algorithms for solving \eqref{saddle_point}-\eqref{dual} has become fairly vast and a thorough overview is not only impossible but also far beyond the focus of this work.
Instead, we review only some primal-dual type algorithms that are most closely related to this work.
Before going into details, we define our notation.

\subsection{Notations}

As already mentioned above, the transpose operation of a matrix or a vector is denoted by superscript ``$\top$".
The spectral norm of $K$ is denoted by $L$, i.e., $L := \|K\| = \sup \{\langle Kx, y\rangle: \|x\|=\|y\|=1, x\in\R^q, y\in\R^p\}$.
Let $h$ be any extended real-valued closed proper convex function defined on a finite dimensional Euclidean space $\R^m$.
The effective domain of $h$ is denoted by $\text{dom}(h) := \{ x\in\R^m: h(x) < +\infty\}$, and the subdifferential of $h$ at $x\in \R^m$ is denoted by $\partial h(x) := \{\xi\in\R^m: \, h(y) \geq h(x) + \langle \xi, y-x\rangle \text{~for all~}y\in\R^m\}$. Furthermore, for $\lambda >0$, the proximal operator  of $\lambda h$ is given by
\begin{eqnarray*}\label{def:prox}
  \prox_{\lambda h}(x) := \arg\min_{y\in \R^m } \Big\{h(y) + {1\over 2\lambda }\|y-x\|^2\Big\}, \quad x\in \R^m.
\end{eqnarray*}
The relative interior of $C$ is denoted by $\text{ri}(C)$.
Finally, throughout this paper, we denote the golden ratio by $\phi$, i.e., $\phi = {\sqrt{5}+1 \over 2}$, which  is a key parameter in golden ratio type algorithms.
Other notations will be specified in the context.

\subsection{Related works}

The theme of this paper is to incorporate linesearch into the golden ratio primal-dual algorithm, which was
originally introduced in \cite{ChY2020Golden} for solving \eqref{saddle_point}-\eqref{dual}.
A main feature of primal-dual type algorithms is that the three problems  \eqref{saddle_point}-\eqref{dual} are solved simultaneously by alternatingly updating the primal and the dual variables. Among others, the classical augmented Lagrangian method and its variants such as ADMM \cite{GM75,GabM76,Jonathan1992On,Lions1979Splitting} are most popular. However, ADMM is not full-splitting since at each iteration it requires to solve a subproblem of the form
$\min_{x\in \R^q}  \frac{1}{2}\|Kx-b_n\|^2 + g(x)$, where $b_n\in \R^p$ varies with the iteration counter $n$. Note that even if the proximal operator of $g$ is easy to evaluate, this problem needs to be solved iteratively, unless $K$ is the identity operator. On the other hand, for regularized least-squares problem, ADMM requires to solve a linear system of equations at each iteration, which could be prohibitive for large scale applications.

The most classical and simple full-splitting algorithm designed in the literature for solving \eqref{saddle_point}-\eqref{dual} goes back to \cite{Uzawa58}, which is nowadays widely known as the Arrow-Hurwicz method.  Started at $x_0\in\R^q$ and $y_0\in\R^p$, the Arrow-Hurwicz method iterates for $n\geq 1$ as
\ben 
\left\{
\ba{l}
x_{n}=\prox_{\tau g}(x_{n-1}-\tau K^\top y_{n-1}),\\
y_{n}=\prox_{\sigma f^*}(y_{n-1}+\sigma K x_{n}),
\ea\right.
\een
where $\tau, \sigma > 0$ are stepsize parameters.
Since $f$ and $g$, as well as their conjugate functions, are closed proper and convex,  their proximal operators are  uniquely well defined everywhere.
The heuristics of Arrow-Hurwicz method 
is to solve the minimax problem \eqref{saddle_point} by alternatingly minimizing with the primal variable $x$ and maximizing with the dual variable $y$ and, meanwhile, incorporating proximal steps by taking into account the latest information of each of the variables.
Convergence of the Arrow-Hurwicz method with small stepsizes was studied in \cite{Esser2010General}, and a sublinear convergence rate result, measured by primal-dual gap function, was obtained in
\cite{Chambolle2011A,Nedic2009Subgradient} when $\text{dom}(f^*)$ is bounded.
Though intuitively make sense,  the Arrow-Hurwicz method does not converge in general. In fact, a divergent example has been constructed in \cite{He2014On}. Nonetheless, this method has been popular in image processing community and is known as primal-dual hybrid gradient method \cite{ZhC08cam,Esser2010General,Chambolle2011A}.

To obtain a convergent full-splitting algorithm under more general setting, Chambolle and Pock \cite{Chambolle2011A,Chambolle2016ergodic} modified the Arrow-Hurwicz method by adopting an extrapolation step. Specifically, $x_n$ is replaced by the extrapolated point $z_n := x_{n}+\delta (x_{n}-x_{n-1})$ in the computation of $y_n$, where $\delta \in (0,1]$ is an extrapolation parameter, resulting
the following iterative scheme
\be\label{pda_CP}
\left\{
\ba{l}
x_{n}=\prox_{\tau g}(x_{n-1}-\tau K^\top y_{n-1}),\\
z_{n}=x_{n}+\delta (x_{n}-x_{n-1}),\\
y_{n}=\prox_{\sigma f^*}(y_{n-1}+\sigma K z_{n}).
\ea\right.
\ee
The convergence of (\ref{pda_CP}) with $\delta=1$ was established in \cite{Chambolle2011A} under the condition $\tau\sigma L^2 < 1$. Later, it was shown in \cite{He2012Convergence} that the iterative scheme \eqref{pda_CP}, with $\delta=1$, is an application of a weighted proximal point algorithm to solve an
equivalent mixed variational inequality of the optimality conditions of \eqref{saddle_point}. Furthermore, the scheme \eqref{pda_CP}, again with $\delta = 1$, is also referred to as split inexact Uzawa method in \cite{Esser2010General}, where the connection of PDA with preconditioned or linearized ADMM has been revealed, see \cite{Chambolle2011A,Shefi2014Rate}.
Note that, without taking a correction step as done in \cite{He2012Convergence}, the convergence of \eqref{pda_CP} with $\delta\in (0,1)$ is still open. The overrelaxed, inertial and accelerated versions of (\ref{pda_CP}) were investigated in \cite{Chambolle2016ergodic}, and its stochastic extension was studied in \cite{Chambolle2018STOCHASTIC}.

Recently, based on a seminal convex combination technique originally introduced by Malitsky \cite{Malitsky2019Golden} for variational inequality problem, we proposed a golden ratio primal-dual algorithm (GRPDA) in \cite{ChY2020Golden}.
Instead of an extrapolation step as taken in \eqref{pda_CP}, a convex combination of essentially all the primal iterates generated till far, i.e., $\{x_i: i = 0, 1, \ldots, n - 1\}$, is used in the $n$-th iteration of GRPDA. Specifically, given $x_0\in\R^q$ and let $z_0 := x_0$, GRPDA iterates for $n\geq 1$ as
\be\label{GRPDA}
\left\{\ba{rcl}
 z_{n}&=&\frac{\psi-1}{\psi} x_{n-1} + \frac{1}{\psi}z_{n-1}, \smallskip \\
 x_{n}&=&\prox_{\tau g}(z_{n}-\tau K^\top y_{n-1}), \smallskip \\
 y_{n}&=&\prox_{\sigma f^*}(y_{n-1}+\sigma K x_{n}).
 \ea\right.\ee
Here $\psi\in (1, \phi]$ determines the convex combination coefficients.
Global iterative convergence and ergodic convergence rate results are established  under the condition $\tau\sigma L^2<\psi$ in \cite{ChY2020Golden}. Since $\psi\in (1,\phi]$ and $\phi = {\sqrt{5}+1 \over 2}$, this stepsize condition is much relaxed than that of the Chambolle and Pock's PDA \eqref{pda_CP}, which is $\tau\sigma L^2<1$.
Therefore, GRPDA permits larger primal and dual stepsizes, which is essential for fast practical convergence.
The experimental results given in \cite{ChY2020Golden} confirmed the benefits of allowing larger stepsizes.


\subsection{Motivations and contributions}
The main contribution of this work is to introduce a linesearch strategy into GRPDA \eqref{GRPDA}.
Our motivations have two aspects. First, in many applications, especially when the matrix $K$ is large and dense, e.g., CT image reconstruction \cite{ImageBook04,SJP12PMB}, the exact spectral norm of $K$ can be very expensive to compute or estimate. On the other hand, even if the spectral norm of $K$ can be obtained, the stepsizes governed by the condition $\tau\sigma \|K\|^2<\psi$ is usually too conservative for fast practical convergence. Hence, our goal in this paper is to incorporate linesearch into GRPDA which can significantly
accelerate the convergence while still theoretically guarantees convergence with desired convergence rate.

%

In general, linesearch requires extra evaluations of  proximal operators and/or matrix-vector multiplications in every linesearch iteration. Interestingly, for many special yet important applications as pointed out in \cite{Malitsky2018A}, the proximal operator of $f^*$ is extremely simple, and as a result the linesearch procedure does not require any additional matrix-vector multiplications.
So, motivated by \cite{Malitsky2018A}, which introduced linesearch into the PDA \eqref{pda_CP}, we propose in this paper to incorporate linesearch into the GRPDA \eqref{GRPDA}. However, as will be seen in later
sections, our theoretical analysis on the stepsize behaviors generated by the linesearch is fundamentally
different from those presented in \cite{Malitsky2018A} or given in other literature.
In particular, our algorithm combining with linesearch not only does not assume \emph{a priori} knowledge
about the spectral norm of $K$, but also generates adaptive and potentially much larger stepsizes.
We establish global convergence as well as ergodic ${\cal O}(1/N)$ sublinear convergence rate in the general convex case, where $N$ denotes the iteration counter. When either one of the component functions is strongly convex, GRPDA with linesearch can be shown to converge at the faster ${\cal O}(1/N^2)$ ergodic sublinear rate. Furthermore, if both component functions are strongly convex, iterative and nonergodic linear convergence results are established.
Hence, even with the stepsize relaxations by the proposed linesearch, the global convergence as well as
theoretical convergence rates are still guaranteed to remain consistent with their counterparts without using linesearch. In addition, our numerical experiments show much practical benefits can be obtained
from the proposed linesearch strategies.
%

\subsection{Organization}
The rest of this paper is organized as follows. In Section \ref{sec:asmp}, we make our assumptions, provide some useful facts and further define some notations. Section \ref{sec-GRPDA-L} is devoted to the GRPDA with linesearch in the general convex case, while the cases when either one or both of the component functions are strongly convex are discussed in Section \ref{sec-acc-L}.
Our numerical results on minimax matrix game and LASSO problems are reported in Section \ref{sec-experiments} to show the benefits obtained by adopting the linesearch strategy. Finally, some concluding remarks are drawn in Section \ref{sec-conclusion}.

\section{Assumptions and preliminaries}
\label{sec:asmp}

\subsection{Assumptions and further notation}
Throughout the paper, we make the following blanket assumptions.
\begin{asmp}\label{asmp-1}
Assume that the set of solutions of \eqref{primal} is nonempty and,
in addition, there exists $\tilde x\in\text{ri}(\text{dom}(g))$ such that $K\tilde x\in\text{ri}(\text{dom}(f))$.
\end{asmp}

Under Assumption \ref{asmp-1}, it follows from \cite[Corollaries 28.2.2 and 28.3.1]{Rockafellar1970Convex} that
${\bar x}\in \R^q$ is a solution of \eqref{primal} if and only if there exists ${\bar y}\in \R^p$ such that $({\bar x},{\bar y})$ is a saddle point of $\cL(x,y)$, i.e.,
$\cL({\bar x},y)\leq\cL({\bar x},{\bar y})\leq\cL(x,{\bar y})$ for all $(x,y)\in \R^q\times \R^p$. As such, $({\bar x},{\bar y})$ is a solution of the minimax problem \eqref{saddle_point} and $\bar y$ is a solution of the dual problem \eqref{dual}.
We denote the set of solutions of \eqref{saddle_point} by $\Omega$, which is nonempty under  Assumption \ref{asmp-1} and
characterized by
\[\nonumber 
\Omega := \{({\bar x},{\bar y})\in \R^q\times \R^p: \; 0 \in \partial g({\bar x}) + K^\top {\bar y} \text{~~and~~} 0\in \partial f^*({\bar y}) - K{\bar x}\}.
\]
Hereafter, we let $({\bar x}, {\bar y}) \in \Omega$ be a generic saddle point. When $g$ (resp. $f^*$) is strongly convex, the primal optimal solution $\bar x$ (resp. the dual optimal solution $\bar y$) is unique.
Define
\ben
P(x) := g(x)-g(\bar{x}) + \langle K^\top \bar{y}, x-\bar{x}\rangle,~\forall x\in \R^q, \\
D(y) := f^*(y)-f^*(\bar{y}) - \langle K \bar{x}, y-\bar{y}\rangle,~\forall y\in \R^p.
\een
By subgradient inequality, it is clear that $P(x)\geq 0$ and $D(y)\geq 0$ for all $x\in\R^q$ and $y\in\R^p$.
Apparently, $P(x)$ and $D(y)$ are convex in $x$ and $y$, respectively.
The primal-dual gap function is defined by $G(x,y) := \mathcal{L}(x,\bar{y})-\mathcal{L}(\bar{x},y)$ for $(x,y)\in\R^q\times\R^p$. It is easy to verify that
\be\label{G}
G(x,y) = P(x)+D(y)\geq0,~\forall (x,y)\in \R^q\times\R^p.
\ee
Note that the functions $P(\cdot)$ and $D(\cdot)$ depend on the saddle point $(\bar x,\bar y)\in \Omega$.
Nonetheless, we omit this dependence in the notation $P(\cdot)$ and $D(\cdot)$ since it is always clear from  the context which saddle point is under consideration, and similarly for the primal-dual gap function $G(\cdot,\cdot)$. This measure of primal-dual gap function has been used in, e.g., \cite{Chambolle2011A,Chambolle2016ergodic,Malitsky2018A}, to establish convergence rate results for primal-dual type methods. We also adopt the measure in this paper.

\begin{asmp}\label{asmp-2}
  Assume that the proximal operators of the component functions $f$ and $g$ either have closed form formulas or can be evaluated efficiently.
\end{asmp}

In applications such as signal and image processing and machine learning, the component functions $f$ and $g$ usually enforce data fitting and regularization and hence, often preserve simple structures so that their proximal operators can be computed efficiently or just have closed form formulas. Examples of such functions are abundant, see, e.g., \cite[Chapter 6]{Beck2017book}. Therefore, Assumption \ref{asmp-2} is fulfilled in diverse applications. Note that  the proximal operators of $f^*$ and $g^*$ are also easily computable under Assumption \ref{asmp-2} due to the Moreau decomposition theorem \cite[Theorem 31.5]{Rockafellar1970Convex}.

\subsection{Facts and identities}
The following simple facts and identities will be used repeatedly in the convergence analysis.
\begin{fact}\label{fact_proj}
Let $h: \R^m\rightarrow (-\infty, +\infty]$ be an extended real-valued closed proper and $\gamma$-strongly convex function with modulus $\gamma \geq 0$. Then for any $\tau>0$ and $x\in \R^m$, it holds that $z = \prox_{\tau h}(x)$ if and only if
$h(y) \geq h(z)+ {1\over \tau}\langle x-z, y-z\rangle + {\gamma \over 2}\|y-z\|^2$ for all $y\in \R^m$.
\end{fact}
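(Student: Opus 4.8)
The statement to prove is Fact \ref{fact_proj}, the standard characterization of the proximal operator of a strongly convex function via a variational inequality. Let me sketch a proof plan.

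\medskip

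\textbf{Proof proposal.} The plan is to use the first-order optimality conditions for the strongly convex minimization problem defining the proximal operator, together with the fact that a $\gamma$-strongly convex function $h$ can be written as $h = \hat h + \frac{\gamma}{2}\|\cdot\|^2$ with $\hat h$ convex, or more directly by exploiting the subdifferential inequality available for strongly convex functions. Concretely, fix $\tau > 0$ and $x \in \R^m$, and set $\varphi(y) := h(y) + \frac{1}{2\tau}\|y-x\|^2$, so that $z = \prox_{\tau h}(x)$ means $z = \argmin_{y} \varphi(y)$. Since $h$ is closed, proper and $\gamma$-strongly convex and $\frac{1}{2\tau}\|\cdot - x\|^2$ is smooth and strongly convex, $\varphi$ is closed, proper and strongly convex, hence has a unique minimizer, so $\prox_{\tau h}$ is well defined.

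\medskip

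First I would establish the ``only if'' direction. If $z = \prox_{\tau h}(x)$, then by Fermat's rule $0 \in \partial \varphi(z) = \partial h(z) + \frac{1}{\tau}(z - x)$ (the sum rule for subdifferentials applies since the quadratic term is finite everywhere and differentiable), so $\frac{1}{\tau}(x - z) \in \partial h(z)$. Now I invoke the strong-convexity subgradient inequality: since $h$ is $\gamma$-strongly convex, for any $\xi \in \partial h(z)$ one has $h(y) \geq h(z) + \langle \xi, y - z\rangle + \frac{\gamma}{2}\|y-z\|^2$ for all $y$. Taking $\xi = \frac{1}{\tau}(x-z)$ gives exactly the claimed inequality. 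For the ``if'' direction, suppose the inequality $h(y) \geq h(z) + \frac{1}{\tau}\langle x - z, y - z\rangle + \frac{\gamma}{2}\|y-z\|^2$ holds for all $y \in \R^m$; in particular it shows $h(z)$ is finite, i.e. $z \in \dom(h)$, and dropping the nonnegative term $\frac{\gamma}{2}\|y-z\|^2$ it shows $\frac{1}{\tau}(x-z) \in \partial h(z)$, which is precisely $0 \in \partial \varphi(z)$; since $\varphi$ is convex this is sufficient for $z$ to be its (unique) global minimizer, i.e. $z = \prox_{\tau h}(x)$. The case $\gamma = 0$ is included throughout and recovers the usual prox characterization.

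\medskip

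\textbf{Main obstacle.} This is a routine fact, so there is no real obstacle; the only point requiring a little care is justifying the strong-convexity subgradient inequality in the form stated, namely that $\gamma$-strong convexity of $h$ is equivalent to $h(y) \geq h(z) + \langle \xi, y-z\rangle + \frac{\gamma}{2}\|y-z\|^2$ for every $\xi \in \partial h(z)$. This follows by applying the ordinary subgradient inequality to the convex function $\hat h := h - \frac{\gamma}{2}\|\cdot\|^2$ at the point $z$ (noting $\partial \hat h(z) = \partial h(z) - \gamma z$) and then rearranging, using the identity $\langle z, y - z\rangle + \frac12\|y - z\|^2 = \frac12\|y\|^2 - \frac12\|z\|^2$. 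The other mild technical point is the validity of the subdifferential sum rule $\partial \varphi = \partial h + \frac{1}{\tau}(\cdot - x)$, which holds here without a constraint qualification because the quadratic summand is real-valued and continuous on all of $\R^m$ \cite[Theorem 23.8]{Rockafellar1970Convex}.
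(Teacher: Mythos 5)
Your proof is correct: Fermat's rule plus the subdifferential sum rule (valid here because the quadratic term is everywhere finite) reduces the prox characterization to the membership $\frac{1}{\tau}(x-z)\in\partial h(z)$, and the strong-convexity subgradient inequality converts that membership into exactly the stated inequality, with the converse following by dropping the nonnegative quadratic term. The paper states this as a Fact without any proof, so there is nothing to compare against; your argument is the standard one and the technical points you flag (the sum rule and the equivalence between $\gamma$-strong convexity and the strengthened subgradient inequality) are handled correctly.
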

\begin{fact}\label{fact_ab}
Let $\{a_n: n\geq 1\}$ and $\{b_n: n\geq 1\}$ be real and nonnegative sequences. If $a_{n+1} \leq a_n-b_n$ for all $ n\geq1$, then $\lim_{n\rightarrow\infty}a_n$ exists and $\lim_{n\rightarrow \infty} b_n = 0$.
\end{fact}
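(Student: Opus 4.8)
The plan is to combine the monotone convergence theorem with a telescoping argument. First I would note that since $b_n\geq 0$, the hypothesis $a_{n+1}\leq a_n-b_n$ gives in particular $a_{n+1}\leq a_n$ for every $n\geq 1$, so the sequence $\{a_n\}$ is nonincreasing; as it is also bounded below by $0$, the monotone convergence theorem yields that $a:=\lim_{n\to\infty}a_n$ exists and is finite, with $a\geq 0$. This settles the first assertion.

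For the second assertion I would rearrange the hypothesis as $0\leq b_n\leq a_n-a_{n+1}$. Letting $n\to\infty$, the right-hand side converges to $a-a=0$, so by the squeeze principle $\lim_{n\to\infty}b_n=0$. Equivalently, summing over $n=1,\dots,N$ telescopes to $\sum_{n=1}^{N}b_n\leq a_1-a_{N+1}\leq a_1-a<\infty$, which shows that $\sum_{n=1}^{\infty}b_n$ converges and hence $b_n\to 0$; I would likely record this stronger summability form as well, since it is often the more directly usable consequence in the convergence proofs that follow.

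I do not anticipate any genuine obstacle: the argument is entirely elementary. The only point worth a moment's attention is to check that both ingredients required for monotone convergence are present — monotonicity is supplied by $b_n\geq 0$ and the lower bound by $a_n\geq 0$ — and, as a side remark, the nonnegativity of $a_n$ is used only to bound the sequence from below, so the same conclusion would hold if $\{a_n\}$ were merely assumed to be bounded below.
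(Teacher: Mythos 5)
Your proof is correct and is exactly the standard argument; the paper states this fact without proof, treating it as elementary, so there is nothing to compare against. Your added remark that $\sum_{n=1}^{\infty} b_n \leq a_1 - a < \infty$ is worth keeping, since this summability form is precisely what the paper invokes later (e.g., in the proof of Theorem 3.1, where $\sum_{n=1}^{\infty} b_n < \infty$ is used to extract a subsequence).
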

For any $x, y, z\in \bR^m$ and $\alpha\in\bR$, there hold
\be
2\langle x-y, x-z\rangle&=&  \|x-y\|^2 +  \|x-z\|^2  -\|y-z\|^2,\label{id}\\
\|\alpha x+(1-\alpha)y\|^2&=& \alpha \|x\|^2+(1-\alpha)\|y\|^2-\alpha(1-\alpha)\|x-y\|^2.\label{id2}
\ee
Verifications of these identities are elementary.

\section{GRPDA with linesearch --- Convex case}
\label{sec-GRPDA-L}


As pointed out in Section \ref{sec:intro}, in some applications it can be very expensive to estimate the spectral norm  of $K$. Furthermore, the stepsizes governed by the condition $\tau\sigma \|K\|^2 < \psi \in (1,\phi]$ are usually too conservative in the applications. To address these issues, we introduce a linesearch strategy into GRPDA to choose stepsizes adaptively.
Within each linesearch step, only the dual variable needs to be updated.
The resulting algorithm, called GRPDA-L, is stated in Algorithm \ref{algo2}.

%


\vskip5mm
\hrule\vskip2mm
\begin{algo}
[GRPDA-L]\label{algo2}
{~}\vskip 1pt {\rm
\begin{description}
\item[{\em Step 0.}] Choose $x_0 = z_{0} \in \bR^q,$ $y_0\in \bR^p$, $\psi\in(1,\phi)$, $\sigma\in(0,1)$, $\beta>0$, $\mu\in(0,1)$ and $\tau_0 > 0$.
    Set $\varphi=\frac{1+\psi}{\psi^2} \in (1,2)$ and $n=1$.
\item[{\em Step 1.}]Compute
\be
  z_{n}&=&\frac{\psi-1}{\psi} x_{n-1} + \frac{1}{\psi}z_{n-1}, \label{z_updating} \\
 x_{n}&=&\prox_{\tau_{n-1} g}(z_{n}-\tau_{n-1} K^\top y_{n-1}).\label{x_updating}
 \ee
\item[{\em Step 2.}] Let $\tau= \varphi\tau_{n-1}$ and compute
\be\label{y1_updating}
   y_{n}=\prox_{\beta\tau_{n} f^*}(y_{n-1}+\beta\tau_{n} Kx_{n}),
\ee
where $\tau_n = \tau \mu^i$ and $i$ is the smallest nonnegative integer such that
\be\label{y-ls}
\sqrt{\beta\tau_{n}}\|K^\top y_{n}-K^\top y_{n-1}\|\leq\sigma\sqrt{\psi / \tau_{n-1}}\|y_{n}-y_{n-1}\|.
\ee
\item[{\em Step 3.}] Set $n\leftarrow n + 1$ and return to Step 1.
  \end{description}
}
\end{algo}
\vskip1mm\hrule\vskip5mm


From Step 2 of Algorithm \ref{algo2}, the linesearch procedure may require to compute $\prox_{\beta\tau_n f^*}$ and $K^\top y_{n}$ repeatedly to find a proper $\tau_n$ at each iteration. However,  as pointed out in \cite[Remark 2]{Malitsky2019Golden}, the linesearch procedure becomes extremely simple when the proximal operator of $\lambda f^*$, where $\lambda >0$, is linear or affine. Some examples are listed below.
\begin{enumerate}
\item[(a)] $\prox_{\lambda f^*}(u)=u-\lambda c$ when $f^*(y) = \langle c,y\rangle$ for some $c\in\R^p$,

\item[(b)] $\prox_{\lambda f^*}(u)=\frac{1}{1+\lambda}(u+\lambda b)$ when $f^*(y)= \frac 1 2 \|y-b\|^2$ for some $b\in\R^p$,

\item[(c)] $\prox_{\lambda f^*}(u)=u+\frac{b-\langle u,a\rangle}{\|a\|^2}a$ when $f^*(y)$ is the indicator function of the hyperplane $H = \{u: \langle a,u\rangle= b\}$ for some $a\in\R^p$ and $b\in\R$.
\end{enumerate}
In all these cases, the evaluation of $\prox_{\lambda f^*}$ is very simple, and it is unnecessary to compute $K^\top y_n$ repeatedly since it can be obtained by combining some already computed quantities.
 Therefore, in these cases the linesearch step is quite cheap and does not require any additional matrix-vector multiplications.
Furthermore, if necessary, one can always exchange the roles of the primal and the dual variables in problem (\ref{saddle_point}) to take advantage of the above mentioned structure.
Also note that at each iteration the increasing ratio of stepsizes $\{\tau_n \}$ is upper bounded by $\varphi\in(1, 2)$ as $\tau_n\leq \varphi\tau_{n-1}$, and as suggested in \cite{Malitsky2019Golden}, one
default choice could let $\psi=1.5$ so that $\varphi=(1+\psi)/\psi^2 = 10/9$.
Here, the parameter $\beta>0$ in Algorithm \ref{algo2}, also appeared in Algorithm \ref{algo-acc3-L},
is introduced to scale the primal and the dual variables so that they will converge in a weighted
balance way. Similar settings has also been done in  \cite{ChY2020Golden} for GRPDA without linesearch.

The following lemma shows that the linesearch step of Algorithm \ref{algo2} is well-defined.
In addition, it establishes some important properties on sequences
$\{\tau_n: n\geq 1 \}$ and $\{\delta_n: n\geq 1\}$,
with $\delta_n := \tau_n/\tau_{n-1}$, which are essential for establishing the convergence results of Algorithm \ref{algo2}.
Since the proof of Lemma \ref{lem_bound} is rather technical and complicated, for fluency of
the overall paper, we put the proof in Appendix \ref{proof:lem-bound}.

\begin{lem}\label{lem_bound}
Let $\underline{\tau}:=  \frac{\sigma\sqrt{\psi}}{L\sqrt{\beta\varphi}}  > 0$. Then, we have the following properties.
(i) The linesearch step of Algorithm \ref{algo2}, i.e., Step 2, always terminates.
(ii) For  any $\rho \in (0, 1)$,
there exists an infinite subsequence $\{n_k: k \ge 1\} \subseteq \{1,2, \ldots\}$
such that  $\tau_{n_k} \geq \underline{\tau}$ and $\delta_{n_k} \ge \rho$.
(iii) For any integer $N >0$, we have $|\mathcal{K}_N| \ge \hat{c} N$ for some
constant $\hat{c} > 0$, where $\mathcal{K}_N = \{1 \le n \le N: \tau_n \ge \underline{\tau}\}$
and $|\mathcal{K}_N|$ is the cardinality of the set $\mathcal{K}_N$, which implies
$\sum_{n=1}^N \tau_n \ge \underline{c} N$
with $\underline{c} = \hat{c} \underline{\tau} $.
\end{lem}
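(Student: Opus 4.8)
The plan is to analyze the linesearch condition \eqref{y-ls} quantitatively to control how fast the stepsizes $\tau_n$ can shrink, and then to combine this with the upper bound $\tau_n \le \varphi \tau_{n-1}$ to show that the sequence cannot stay below $\underline\tau$ too often.

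\textbf{Step (i): Termination of the linesearch.} For fixed $n$, as $i$ increases, $\tau_n = \varphi\tau_{n-1}\mu^i \to 0$. The key is to show that for $\tau_n$ small enough the inequality \eqref{y-ls} is forced to hold. Using Fact \ref{fact_proj} applied to the proximal step \eqref{y1_updating} (with $h = f^*$, which is convex so $\gamma = 0$) and a standard firm-nonexpansiveness-type estimate, one gets $\|y_n - y_{n-1}\| \ge$ (something like) $\beta\tau_n\|\Pi(Kx_n - \text{subgradient term})\|$; more directly, since $y_n$ depends on $\tau_n$ continuously with $y_n \to y_{n-1}$ as $\tau_n\to 0$, and in fact $\|y_n - y_{n-1}\| = O(\beta\tau_n)$ while $\|K^\top y_n - K^\top y_{n-1}\| \le L\|y_n - y_{n-1}\|$, the left side of \eqref{y-ls} is $O((\beta\tau_n)^{3/2})$ whereas the right side is $\Theta(\sqrt{\tau_n^{-1}})\cdot\|y_n-y_{n-1}\|$. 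Comparing powers of $\tau_n$ shows \eqref{y-ls} holds once $\tau_n \le \underline\tau$ (the constant $\underline\tau$ is precisely calibrated so that $L\sqrt{\beta\tau_n} \le \sigma\sqrt{\psi/\tau_{n-1}}$ given $\tau_n \le \varphi\tau_{n-1}$, since then $L^2\beta\tau_n\tau_{n-1} \le L^2\beta\varphi\tau_{n-1}^2$, and one checks $\tau_{n-1}\le \underline\tau/\mu$ suffices, etc.). Hence the integer $i$ is finite.

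\textbf{Steps (ii)–(iii): The counting argument.} This is the heart of the lemma and the main obstacle. The difficulty is that a single large $\tau_{n-1}$ might be followed by a very small $\tau_n$ if the linesearch backtracks a lot, so there is no pointwise lower bound on $\tau_n$. The strategy is a global/amortized argument using $\delta_n = \tau_n/\tau_{n-1}$: note $\prod_{n=1}^N \delta_n = \tau_N/\tau_0$. From part (i)'s termination analysis, the backtracking stops as soon as $\tau_n \le \underline\tau$, which gives a lower bound $\tau_n \ge \mu\,\underline\tau$ \emph{whenever} $\tau_{n-1} > \underline\tau$ forced at least one backtracking step — more usefully it gives $\delta_n \ge \mu\underline\tau/\tau_{n-1}$ type control. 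For (ii): if $\tau_n < \underline\tau$ for all large $n$, then eventually $\tau_n = \varphi\tau_{n-1}$ is accepted without backtracking (the linesearch inequality holds at $i=0$ once $\tau_{n-1}$ is small enough), forcing $\tau_n$ to grow geometrically by factor $\varphi > 1$, contradicting $\tau_n < \underline\tau$; this yields infinitely many $n$ with $\tau_n \ge \underline\tau$, and a refinement gives $\delta_{n_k} \ge \rho$ along a subsequence by choosing, among indices where $\tau$ crosses the threshold $\underline\tau$ from below, those where the ratio is not too small. For (iii), I would set up the bookkeeping: let $\mathcal{K}_N = \{n \le N : \tau_n \ge \underline\tau\}$ and $\mathcal{B}_N$ its complement in $\{1,\dots,N\}$. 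For $n \in \mathcal{B}_N$ we have at worst $\delta_n \ge \mu\underline\tau/\tau_{n-1}$, and crucially when $n\in\mathcal{B}_N$ is preceded by $n-1\in\mathcal{B}_N$ we even have $\delta_n = \varphi$ eventually; for $n\in\mathcal{K}_N$ trivially $\delta_n \ge \underline\tau/\tau_{n-1}$ and $\tau_n$ is not tiny. Taking the product $\tau_N/\tau_0 = \prod_{n\le N}\delta_n$ and bounding below: the $\mathcal{B}_N$ indices contribute a factor at least $\mu^{|\mathcal{B}_N|}\varphi^{(\text{consecutive runs})}$ and the $\mathcal{K}_N$ indices cannot make $\tau_N$ arbitrarily large. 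Combined with an upper bound $\tau_N \le \tau_0\varphi^N$ and a lower bound forcing $\tau_N$ to not decay faster than geometrically, one derives $|\mathcal{B}_N| \le (1-\hat c)N$, i.e. $|\mathcal{K}_N| \ge \hat c N$. The cleanest route is: since each backtracking by $\mu$ can only happen "paid for" by a preceding accepted growth, the number of backtracking steps up to iteration $N$ is $O(N)$ with a fraction strictly less than $1$, so a positive fraction of iterations have $\tau_n \ge \underline\tau$.

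\textbf{Conclusion.} The final claim $\sum_{n=1}^N \tau_n \ge \underline c N$ with $\underline c = \hat c\,\underline\tau$ is then immediate: restricting the sum to $n \in \mathcal{K}_N$ gives $\sum_{n=1}^N \tau_n \ge \sum_{n\in\mathcal{K}_N}\tau_n \ge |\mathcal{K}_N|\,\underline\tau \ge \hat c\,\underline\tau\,N$. I expect the delicate point to be making the amortized counting in (iii) rigorous — precisely quantifying "a run of $k$ backtracking steps must be preceded/compensated by enough accepted increasing steps" so as to extract the explicit constant $\hat c$ — which is presumably why the authors defer it to Appendix \ref{proof:lem-bound}.
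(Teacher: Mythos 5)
Your overall strategy is the same as the paper's (reduce the linesearch test to the sufficient condition $L\sqrt{\beta\tau_n}\le\sigma\sqrt{\psi/\tau_{n-1}}$, note that once $\tau_n<\underline{\tau}$ the full trial step $\varphi\tau_{n-1}$ is accepted so the stepsize recovers geometrically, and then amortize the lengths of the ``bad'' stretches against the preceding ``good'' stretches). Parts (i) and (ii) are essentially correct, modulo the unnecessary $O((\beta\tau_n)^{3/2})$ detour in (i) and the unaddressed case in (ii) where $\tau_n\ge\underline{\tau}$ for all large $n$ (there the paper argues that if $\delta_n<\rho$ eventually then $\tau_n\to0$, a contradiction).

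The genuine gap is in (iii), and it starts from a false claim: you assert that ``the backtracking stops as soon as $\tau_n\le\underline{\tau}$,'' hence $\tau_n\ge\mu\underline{\tau}$ after any drop. This is not true. The guaranteed-acceptance threshold is $\tau_n\le\varphi\underline{\tau}^2/\tau_{n-1}$ (equivalently $\sqrt{\mu^i}\le\underline{\tau}/\tau_{n-1}$), which depends on $\tau_{n-1}$ and lies far \emph{below} $\underline{\tau}$ when $\tau_{n-1}$ is large; a single drop with $j$ backtracks lands at $\tau_n=\varphi\tau_{n-1}\mu^j$, which can be much smaller than $\mu\underline{\tau}$, and the subsequent recovery then occupies on the order of $j\log_\varphi(1/\mu)$ iterations, not a bounded number. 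Because of this, your product bookkeeping ($\mathcal{B}_N$ contributing ``at least $\mu^{|\mathcal{B}_N|}\varphi^{\cdots}$'') does not close: good indices can also have very small $\delta_n$ (backtracking that stays above $\underline{\tau}$), and nothing in your accounting prevents a constant number of good steps from being followed by arbitrarily long recovery stretches. The missing ingredient — which is precisely what the paper's Appendix proof supplies as \eqref{tau-aaa}--\eqref{tau-bbb} — is the information extracted from the \emph{failure} of the linesearch at trial $i=j-1$: since $\|K^\top(y-y')\|\le L\|y-y'\|$, failure forces $\beta\varphi\tau_{n-1}^2\mu^{j-1}L^2>\sigma^2\psi$, i.e.\ $\tau_{n-1}>\underline{\tau}\,\mu^{-(j-1)/2}$. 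Since each good step increases $\tau$ by at most the factor $\varphi$ and the good stretch starts below $\varphi\underline{\tau}$, this lower bound on $\tau_{n-1}$ forces the good stretch to have length at least about $\tfrac{j-1}{2}\log_\varphi(1/\mu)$, which is comparable to the recovery length $\le j\log_\varphi(1/\mu)$; that comparison (plus a separate treatment of small $j$ and of the initial interval) is what yields the uniform ratio $\bar{c}$ and hence $|\mathcal{K}_N|\ge\hat{c}N$. Without invoking the failed trial in this way, the claim ``the number of backtracking steps up to iteration $N$ is a fraction strictly less than $1$'' is asserted rather than proved.
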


We emphasize that the linesearch procedure adopted by Algorithm \ref{algo2} is motivated but
theoretically very different from that of
\cite[Algorithim 1]{Malitsky2018A}. In fact, the sequence $\{\tau_n: n\geq 1\}$ generated by \cite[Algorithim 1]{Malitsky2018A}
is uniformly bounded below by some positive constant, see \cite[Lemma 3.3 (ii)]{Malitsky2018A}.
In contrast, as seen in (iii) of Lemma~\ref{lem_bound},
only a subsequence $\{\tau_{n_k}: k\geq 1\}$, a fraction of $\{\tau_n: n\geq 1\}$, is guaranteed to
have uniform lower bound $\underline{\tau} > 0$.
 Similar arguments also apply to Algorithms \ref{algo-acc-L} and \ref{algo-acc3-L} in Section \ref{sec:g-str}.

\subsection{Useful lemmas}
We next present two useful lemmas, which play critical roles in the convergence analysis.
Hereafter, we always fix $(\bar{x}, \bar{y})\in \Omega$ arbitrarily without further mentioning.

\begin{lem}\label{lem1}
Let $\{(z_n,x_n,y_n): n\geq 1\}$ be generated by Algorithm \ref{algo2}. Then, for any
$(\bar{x}, \bar{y})\in \Omega$, there holds
\be
\tau_n G(x_n,y_n)
&\leq&\langle x_{n+1}-z_{n+1}, \bar{x}-x_{n+1}\rangle+ \frac{1}{\beta} \langle  y_{n}-y_{n-1}, \bar{y}-y_{n}\rangle+ \psi\delta_{n} \langle x_{n}-z_{n+1}, x_{n+1}- x_n \rangle \nonumber\\
&&+ \tau_n\big\langle K^\top (y_{n}- y_{n-1}), x_n-x_{n+1}\big\rangle. \label{lem1:ineq}
\ee
\end{lem}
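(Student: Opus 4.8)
The plan is to derive inequality \eqref{lem1:ineq} by writing down the variational characterizations of the two proximal updates \eqref{x_updating} and \eqref{y1_updating} via Fact \ref{fact_proj}, then combining them with the saddle point inequalities satisfied by $(\bar x, \bar y)\in\Omega$. First I would apply Fact \ref{fact_proj} to the $x$-update $x_n = \prox_{\tau_{n-1}g}(z_n - \tau_{n-1}K^\top y_{n-1})$ (with strong convexity modulus $\gamma = 0$, or $\gamma_g$ in the strongly convex setting, though here we take $0$), but shifted by one index: from $x_{n+1} = \prox_{\tau_n g}(z_{n+1} - \tau_n K^\top y_n)$ we get, for the choice $y = \bar x$,
\[
g(\bar x) \ge g(x_{n+1}) + \tfrac{1}{\tau_n}\langle z_{n+1} - \tau_n K^\top y_n - x_{n+1},\, \bar x - x_{n+1}\rangle.
\]
Rearranging, $\tau_n\big(g(x_{n+1}) - g(\bar x)\big) \le \langle x_{n+1} - z_{n+1},\, \bar x - x_{n+1}\rangle + \tau_n\langle K^\top y_n,\, x_{n+1} - \bar x\rangle$. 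Similarly, from $y_n = \prox_{\beta\tau_n f^*}(y_{n-1} + \beta\tau_n K x_n)$ with $y = \bar y$,
\[
\tfrac{1}{\beta}\big(f^*(y_n) - f^*(\bar y)\big) \le \tfrac{1}{\beta}\langle y_n - y_{n-1},\, \bar y - y_n\rangle \cdot \tfrac{1}{\tau_n} \cdot \tau_n + \langle Kx_n,\, y_n - \bar y\rangle,
\]
i.e.\ $\tau_n\big(f^*(y_n) - f^*(\bar y)\big) \le \tfrac{1}{\beta}\langle y_n - y_{n-1},\, \bar y - y_n\rangle + \tau_n\langle Kx_n,\, y_n - \bar y\rangle$ after I absorb the $\beta\tau_n$ appropriately; I will be careful with exactly where the factor $\tau_n$ sits.

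Next I would recall that $G(x_n, y_n) = P(x_n) + D(y_n) = \big(g(x_n) - g(\bar x) + \langle K^\top\bar y, x_n - \bar x\rangle\big) + \big(f^*(y_n) - f^*(\bar y) - \langle K\bar x, y_n - \bar y\rangle\big)$, so I need to relate the left-hand sides of the two proximal inequalities (which involve $g(x_{n+1})$ and $f^*(y_n)$) to $g(x_n)$ and $f^*(y_n)$. For the dual part this is immediate. For the primal part I would use the subgradient inequality for $g$ at $x_{n+1}$ in the direction $x_n$: a subgradient of $g$ at $x_{n+1}$ is available from the optimality of the prox, namely $\tfrac{1}{\tau_n}(z_{n+1} - \tau_n K^\top y_n - x_{n+1}) \in \partial g(x_{n+1})$, giving
\[
g(x_n) \ge g(x_{n+1}) + \tfrac{1}{\tau_n}\langle z_{n+1} - \tau_n K^\top y_n - x_{n+1},\, x_n - x_{n+1}\rangle,
\]
which converts the $g(x_{n+1})$ term into $g(x_n)$ at the cost of an inner-product remainder. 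Then I would collect the cross terms $\langle K^\top y_n, \cdot\rangle$, $\langle K^\top \bar y, \cdot\rangle$, $\langle Kx_n, \cdot\rangle$, $\langle K\bar x, \cdot\rangle$ and use bilinearity to regroup them; the terms involving only $\bar x, \bar y$ and $x_n, y_n$ should cancel using $\langle K x_n, y_n\rangle - \langle K\bar x, y_n\rangle - \langle K x_n, \bar y\rangle + \langle K\bar x, \bar y\rangle$ type manipulations, and what survives should be $\tau_n\langle K^\top(y_n - y_{n-1}), x_n - x_{n+1}\rangle$ plus terms absorbed into the other inner products. Finally I would use the defining relation \eqref{z_updating} for $z_{n+1}$, namely $z_{n+1} = \tfrac{\psi-1}{\psi}x_n + \tfrac{1}{\psi}z_n$, equivalently $\psi(x_n - z_{n+1}) = z_n - z_{n+1}$ rearranged, actually $x_n - z_{n+1} = \tfrac{1}{\psi}(x_n - z_n)$ — no, from $z_{n+1} = x_n - \tfrac{1}{\psi}(x_n - z_n)$ we get $x_n - z_{n+1} = \tfrac{1}{\psi}(x_n - z_n)$; I would use $\psi\delta_n(x_n - z_{n+1})$ paired with $(x_{n+1} - x_n)$ to match the third term on the right-hand side of \eqref{lem1:ineq}, rewriting the remainder from the subgradient step in this form.

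The main obstacle I expect is the bookkeeping in the regrouping step: matching the coefficient $\psi\delta_n = \psi\tau_n/\tau_{n-1}$ on the term $\langle x_n - z_{n+1}, x_{n+1} - x_n\rangle$ requires carefully tracking how the factor $1/\tau_n$ from the prox characterization of $x_{n+1}$ interacts with the factor $1/\tau_{n-1}$ implicit in the original $z_{n+1}$-update and in the subgradient remainder. In particular, the remainder $\tfrac{1}{\tau_n}\langle z_{n+1} - \tau_n K^\top y_n - x_{n+1}, x_n - x_{n+1}\rangle$ combines with $\langle x_{n+1} - z_{n+1}, x_n - x_{n+1}\rangle$-type pieces; one must pull out the correct multiple of $\langle x_n - z_{n+1}, x_{n+1} - x_n\rangle$, and the leftover $\langle x_{n+1} - z_{n+1}, x_{n+1} - \bar x\rangle$ should recombine with the first term of \eqref{lem1:ineq}. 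I would organize this by first proving the inequality with all proximal-characterization inner products written out explicitly in terms of $z_{n+1}$, $x_{n+1}$, $y_n$, $y_{n-1}$, and only at the very end substituting the $z$-recursion and grouping into the four advertised terms; checking that no stray terms remain (in particular that the $\tau_n K^\top y_n$ pieces telescope correctly against the $K^\top(y_n - y_{n-1})$ term and the $\langle K^\top\bar y, x_n - \bar x\rangle$ piece of $P(x_n)$) is the delicate point. Throughout, I would keep the modulus of strong convexity at $0$ since this is the general convex case, noting that the strongly convex refinements in Section \ref{sec:g-str} simply reinstate the $\tfrac{\gamma}{2}\|\cdot\|^2$ terms from Fact \ref{fact_proj}.
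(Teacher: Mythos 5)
Your overall architecture is the right one --- combine the prox characterizations of the $x$- and $y$-updates tested at $(\bar x,\bar y)$ with a third inequality that bridges $g(x_{n+1})$ to $g(x_n)$ --- and your treatment of the dual block and of the cross-term cancellation matches the paper's. But the bridging inequality you propose goes in the wrong direction, and this is a genuine gap. Since $P(x_n)$ contains $g(x_n)-g(\bar x)=\bigl(g(x_n)-g(x_{n+1})\bigr)+\bigl(g(x_{n+1})-g(\bar x)\bigr)$ and you already have an upper bound on the second bracket from the prox characterization of $x_{n+1}$ at $\bar x$, you need an \emph{upper} bound on $g(x_n)-g(x_{n+1})$. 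The inequality you write, $g(x_n)\ge g(x_{n+1})+\tfrac{1}{\tau_n}\langle z_{n+1}-\tau_n K^\top y_n-x_{n+1},\,x_n-x_{n+1}\rangle$, is the subgradient inequality at $x_{n+1}$ and gives a \emph{lower} bound on $g(x_n)-g(x_{n+1})$; it cannot be chained with the other upper bound to control $\tau_n\bigl(g(x_n)-g(\bar x)\bigr)$ from above. The needed upper bound requires a subgradient of $g$ at $x_n$, and the one the algorithm supplies is $\tfrac{1}{\tau_{n-1}}(z_n-\tau_{n-1}K^\top y_{n-1}-x_n)\in\partial g(x_n)$, coming from the prox step \eqref{x_updating} at iteration $n$ (stepsize $\tau_{n-1}$, dual point $y_{n-1}$), tested at $x_{n+1}$; this is exactly the paper's inequality \eqref{tem1}.

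This is not just a sign issue that could be patched in place: the whole structure of the right-hand side of \eqref{lem1:ineq} depends on using the iteration-$n$ prox data rather than the iteration-$(n+1)$ data. Multiplying \eqref{tem1} by $\delta_n=\tau_n/\tau_{n-1}$ is what makes the coefficient $\psi\delta_n$ appear (together with $x_n-z_n=\psi(x_n-z_{n+1})$ from \eqref{z_updating}), and it is the $\tau_{n}K^\top y_{n-1}$ term carried by the rescaled \eqref{tem1} that combines with the $\tau_n K^\top y_n$ term from the characterization of $x_{n+1}$ to produce $\tau_n\langle K^\top(y_n-y_{n-1}),\,x_n-x_{n+1}\rangle$. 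Your proposed remainder involves only $\tau_n$ and $y_n$, so neither $\delta_n$ nor $y_{n-1}$ can ever enter the primal-side bookkeeping, and the third and fourth terms of \eqref{lem1:ineq} cannot be assembled. Replacing your bridging step by \eqref{tem1}, scaled by $\delta_n$ as in \eqref{temp_02}, and then summing the three inequalities and cancelling the bilinear terms recovers the paper's proof.
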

\begin{proof}
It follows from  (\ref{x_updating}), (\ref{y1_updating}) and Fact  \ref{fact_proj} that
\begin{align}
  \big\langle x_{n+1}-z_{n+1} + \tau_{n}K^\top y_{n}, ~~ \bar{x}-x_{n+1}\big\rangle
  &\geq \tau_{n}\big(g(x_{n+1})-g(\bar{x})\big),\label{temp_01}\\
\langle x_{n}-z_{n} + \tau_{n-1}K^\top y_{n-1},~~ x_{n+1}-x_{n}\rangle
&\geq \tau_{n-1} \big(g(x_{n})-g(x_{n+1})\big), \label{tem1} \\
 \big\langle \frac{1}{\beta}(y_{n}-y_{n-1}) - \tau_{n}K x_{n}, ~~\bar{y}-y_{n} \big\rangle
&\geq \tau_{n}\big(f^*(y_{n})-f^*(\bar{y})\big).\label{temp_001}
\end{align}
Multiplying (\ref{tem1}) by $\delta_n = \tau_{n} /\tau_{n-1}$ and using the fact
$x_{n}-z_{n} = \psi (x_{n}-z_{n+1})$, which follows from \eqref{z_updating}, we obtain
\be\label{temp_02}
\left\langle \psi\delta_n(x_{n}-z_{n+1}) + \tau_{n}K^\top y_{n-1}, ~~x_{n+1}-x_{n}\right\rangle
\geq \tau_{n}\big(g(x_{n})-g(x_{n+1})\big).
\ee
Direct calculations show that a summation of (\ref{temp_01}), (\ref{temp_001}) and (\ref{temp_02}) gives
\begin{align}
%
%
\langle x_{n+1}-z_{n+1}, \; & \bar{x}-x_{n+1}\rangle +  \frac{1}{\beta} \langle y_{n}-y_{n-1}, \bar{y}-y_{n}\rangle+ \psi\delta_n \langle x_{n}-z_{n+1}, x_{n+1}- x_{n} \rangle \nonumber\\
+ &  \tau_n\big\langle K^\top (y_{n}- y_{n-1}),~~ x_{n}-x_{n+1}\big\rangle-\tau_n \big\langle K^\top \bar{y}, ~~ x_{n}-\bar{x}\big\rangle + \tau_{n} \langle K\bar{x}, ~~ y_{n}-\bar{y}\rangle\nonumber\\
\geq &\tau_{n}\big(f^*(y_{n})-f^*(\bar{y})\big)+\tau_{n}\big(g(x_{n})-g(\bar{x})\big), \nonumber 
\end{align}
which, by the definition of $G(\cdot,\cdot)$ in (\ref{G}),  implies \eqref{lem1:ineq} immediately.
\end{proof}

\begin{lem}\label{lem2}
Let $\{(z_n,x_n,y_n): n\geq 1\}$ be generated by Algorithm \ref{algo2}. For $n\geq 1$, define
\be
 a_n &:=&\frac{\psi}{\psi-1}\|z_{n+1}-\bar{x}\|^2+\frac{1}{\beta}\|y_{n-1}-\bar{y}\|^2,\label{a_n}\\
 b_n &:=&\psi\delta_n\|z_{n+1}-x_n\|^2+
 (1-\sigma)\big(\psi\delta_n\|x_{n+1}-x_n\|^2+\frac{1}{\beta}\|y_{n}-y_{n-1}\|^2\big). \label{b_n}
\ee
Then, it holds that  $a_{n+1}\leq a_{n}-b_n$ for $n\geq 1$.
\end{lem}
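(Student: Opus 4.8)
The plan is to multiply the inequality \eqref{lem1:ineq} of Lemma \ref{lem1} by $2$ and use $2\tau_n G(x_n,y_n)\ge 0$ (recall \eqref{G}) to obtain
\begin{equation*}
0 \le 2\langle x_{n+1}-z_{n+1},\bar{x}-x_{n+1}\rangle + \tfrac{2}{\beta}\langle y_n-y_{n-1},\bar{y}-y_n\rangle + 2\psi\delta_n\langle x_n-z_{n+1},x_{n+1}-x_n\rangle + 2\tau_n\langle K^\top(y_n-y_{n-1}),x_n-x_{n+1}\rangle ,
\end{equation*}
and then to rewrite each of the four inner products in terms of squared norms so that the telescoping form $a_{n+1}\le a_n-b_n$ emerges, with $a_n,b_n$ as in \eqref{a_n}--\eqref{b_n}.

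First I would dispose of the two ``distance-to-solution'' terms. For the dual one, identity \eqref{id} gives $\tfrac{2}{\beta}\langle y_n-y_{n-1},\bar{y}-y_n\rangle = -\tfrac{1}{\beta}\big(\|y_n-\bar{y}\|^2-\|y_{n-1}-\bar{y}\|^2\big) - \tfrac{1}{\beta}\|y_n-y_{n-1}\|^2$, which contributes exactly the dual part of $a_{n+1}-a_n$ plus a spare $-\tfrac{1}{\beta}\|y_n-y_{n-1}\|^2$. For the primal one I would combine \eqref{id} with \eqref{id2} applied to the convex combination $z_{n+2}=\tfrac{\psi-1}{\psi}x_{n+1}+\tfrac{1}{\psi}z_{n+1}$ that comes from \eqref{z_updating}; a short computation (using $1+\tfrac{1}{\psi-1}=\tfrac{\psi}{\psi-1}$) then yields
\begin{equation*}
2\langle x_{n+1}-z_{n+1},\bar{x}-x_{n+1}\rangle = -\tfrac{\psi}{\psi-1}\big(\|z_{n+2}-\bar{x}\|^2-\|z_{n+1}-\bar{x}\|^2\big) - \big(1+\tfrac{1}{\psi}\big)\|x_{n+1}-z_{n+1}\|^2 ,
\end{equation*}
and I would note that $1+\tfrac{1}{\psi}=\psi\varphi$ since $\varphi=\tfrac{1+\psi}{\psi^2}$. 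Substituting these two identities, the inequality becomes
\begin{equation*}
a_{n+1}\le a_n - \psi\varphi\|x_{n+1}-z_{n+1}\|^2 - \tfrac{1}{\beta}\|y_n-y_{n-1}\|^2 + 2\psi\delta_n\langle x_n-z_{n+1},x_{n+1}-x_n\rangle + 2\tau_n\langle K^\top(y_n-y_{n-1}),x_n-x_{n+1}\rangle .
\end{equation*}

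It remains to absorb the last two inner products. For the third term I would apply \eqref{id} with the splitting $(x_n-z_{n+1})+(x_{n+1}-x_n)=x_{n+1}-z_{n+1}$, giving $2\psi\delta_n\langle x_n-z_{n+1},x_{n+1}-x_n\rangle = \psi\delta_n\|x_{n+1}-z_{n+1}\|^2 - \psi\delta_n\|z_{n+1}-x_n\|^2 - \psi\delta_n\|x_{n+1}-x_n\|^2$, whose middle term is precisely the first summand of $b_n$. The key step is the fourth, coupling, term: by Cauchy--Schwarz it is at most $2\tau_n\|K^\top(y_n-y_{n-1})\|\,\|x_n-x_{n+1}\|$, and the linesearch test \eqref{y-ls}, rewritten (using $\tau_{n-1}\tau_n=\tau_n^2/\delta_n$) as $\|K^\top(y_n-y_{n-1})\|\le \tfrac{\sigma}{\tau_n}\sqrt{\psi\delta_n/\beta}\,\|y_n-y_{n-1}\|$, followed by Young's inequality $2ab\le a^2+b^2$ with $a=\sqrt{\psi\delta_n}\,\|x_n-x_{n+1}\|$ and $b=\tfrac{1}{\sqrt{\beta}}\|y_n-y_{n-1}\|$, bounds it by $\sigma\psi\delta_n\|x_{n+1}-x_n\|^2 + \tfrac{\sigma}{\beta}\|y_n-y_{n-1}\|^2$. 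Plugging both estimates in and collecting coefficients, these $\sigma$-terms convert the spare $-\tfrac{1}{\beta}\|y_n-y_{n-1}\|^2$ and $-\psi\delta_n\|x_{n+1}-x_n\|^2$ into the $(1-\sigma)$-weighted summands appearing in $b_n$, and the only leftover is $-\psi(\varphi-\delta_n)\|x_{n+1}-z_{n+1}\|^2$, which is $\le 0$ because $\delta_n=\varphi\mu^i\le\varphi$ (here $i\ge0$ and $\mu\in(0,1)$). Discarding it gives exactly $a_{n+1}\le a_n-b_n$.

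I expect the most delicate bookkeeping to be in the second step --- carrying the primal squared norms through the convex combination and verifying that the residual coefficient of $\|x_{n+1}-z_{n+1}\|^2$ is exactly $\psi\varphi$, so that it is later compensated by the $\psi\delta_n$ produced in the third step together with $\delta_n\le\varphi$. The conceptually essential point, however, is the fourth step: the specific constants in the linesearch inequality \eqref{y-ls} are chosen precisely so that the $K^\top$-coupling term is dominated by the $\sigma$-slack deliberately reserved in the $(1-\sigma)$-weighted terms of $b_n$; this is also where the argument departs from the analysis in \cite{Malitsky2018A}.
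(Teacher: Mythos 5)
Your proposal is correct and follows essentially the same route as the paper's proof: the identical use of Lemma \ref{lem1}, the identity \eqref{x-to-z} obtained from \eqref{z_updating} and \eqref{id2}, the linesearch bound \eqref{y-ls} combined with Young's inequality to absorb the coupling term, and the observation $\delta_n\le\varphi$ (equivalently $1+\tfrac1\psi-\psi\delta_n\ge 0$) to discard the residual $\|x_{n+1}-z_{n+1}\|^2$ coefficient. The only (immaterial for this lemma) difference is that you drop the nonnegative term $2\tau_n G(x_n,y_n)$ at the outset, whereas the paper carries it through to inequality \eqref{lem3.3-i2} so that the same display can be reused in the proof of Theorem \ref{thm22}.
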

\begin{proof}
Fix $n\geq 1$.
First, it is easy to verify from
 $\varphi= (1+\psi)/\psi^2$ and $\delta_n= \tau_n/\tau_{n-1}\leq\varphi$  that
\be\label{lem3.3-c1}
1+\frac{1}{\psi}-\psi\delta_n\geq1+\frac{1}{\psi}-\psi\varphi=0.
\ee
It follows from \eqref{y-ls} and Cauchy-Schwarz inequality that
\be
2\tau_n\|K^\top y_{n}-K^\top y_{n-1}\|\|x_{n+1}-x_n\|
&\leq&\sigma\big(\psi\delta_n\|x_{n+1}-x_n\|^2+ \frac{1}{\beta}\|y_{n}-y_{n-1}\|^2\big). \label{lem3.3-c2}
\ee
Furthermore, by Lemma \ref{lem1}, identity (\ref{id}) and Cauchy-Schwarz inequality, we have
\be\label{ineq11}
&&\|x_{n+1}-\bar{x}\|^2+\frac{1}{\beta}\|y_{n}-\bar{y}\|^2+ 2 \tau_n G(x_n,y_n)\nonumber\\
&\leq&\|z_{n+1}-\bar{x}\|^2+\frac{1}{\beta}\|y_{n-1}-\bar{y}\|^2 + 2\tau_n\|K^\top y_{n}-K^\top y_{n-1}\|\|x_{n+1}-x_n\|  \\
&&-\psi\delta_n\|z_{n+1}-x_n\|^2-(1-\psi\delta_n)\|x_{n+1}-z_{n+1}\|^2
-\psi\delta_n\|x_{n+1}-x_n\|^2-\frac{1}{\beta}\|y_{n}-y_{n-1}\|^2. \nonumber
\ee
Since $x_{n+1}=\frac{\psi}{\psi-1}z_{n+2}-\frac{1}{\psi-1}z_{n+1}$, which follows from
(\ref{id2}) and \eqref{z_updating}, we deduce
\be\label{x-to-z}
\|x_{n+1}-\bar{x}\|^2
&=& {\psi\over \psi-1} \|z_{n+2}-\bar{x}\|^2- {1 \over \psi-1} \|z_{n+1}-\bar{x}\|^2 + {\psi \over (\psi-1)^2} \|z_{n+2}-z_{n+1}\|^2\nonumber\\
&=& {\psi\over \psi-1} \|z_{n+2}-\bar{x}\|^2- {1 \over \psi-1} \|z_{n+1}-\bar{x}\|^2+\frac{1}{\psi}\|x_{n+1}-z_{n+1}\|^2,
\ee
where the second equality is due to $z_{n+2}-z_{n+1} = {\psi-1\over\psi} (x_{n+1} - z_{n+1})$.
Combining \eqref{x-to-z} with (\ref{ineq11}), we obtain
\be\label{ineq_rate1}
&&\frac{\psi}{\psi-1}\|z_{n+2}-\bar{x}\|^2+\frac{1}{\beta}\|y_{n}-\bar{y}\|^2+ 2 \tau_n G(x_n,y_n)\nonumber\\
&\leq&\frac{\psi}{\psi-1}\|z_{n+1}-\bar{x}\|^2+\frac{1}{\beta}\|y_{n-1}-\bar{y}\|^2 + 2\tau_n\|K^\top y_{n}-K^\top y_{n-1}\|\|x_{n+1}-x_n\| \nonumber \\
&&-\psi\delta_n\|z_{n+1}-x_n\|^2-(1+\frac{1}{\psi}-\psi\delta_n)\|x_{n+1}-z_{n+1}\|^2
-\psi\delta_n\|x_{n+1}-x_n\|^2-\frac{1}{\beta}\|y_{n}-y_{n-1}\|^2. \nonumber \\
%
&\leq&\frac{\psi}{\psi-1}\|z_{n+1}-\bar{x}\|^2+\frac{1}{\beta}\|y_{n-1}-\bar{y}\|^2 -\psi\delta_n\|z_{n+1}-x_n\|^2 \nonumber \\
& & - (1 - \sigma)\psi\delta_n\|x_{n+1}-x_n\|^2- \frac{ 1-\sigma }{\beta}\|y_{n}-y_{n-1}\|^2, \label{lem3.3-i2}
\ee
where the second inequality follows from \eqref{lem3.3-c1} and \eqref{lem3.3-c2}.
Finally, by the definitions of $a_n$ and $b_n$ in \eqref{a_n} and \eqref{b_n}, respectively, and the fact that $G(x_n,y_n) \geq 0$, \eqref{lem3.3-i2} implies $a_{n+1}\leq a_{n}-b_n$ immediately. 
\end{proof}

\subsection{Convergence results}
Now, we are ready to establish global convergence and ergodic sublinear convergence rate of Algorithm \ref{algo2}.
\begin{thm}[Global convergence]\label{thm12}
Let $\{(z_n,x_n,y_n): n\geq 1\}$ be generated by Algorithm \ref{algo2}.
 Then  $\{(x_{n}, y_n): n\geq 1\}$ converges to a solution of (\ref{saddle_point}).
\end{thm}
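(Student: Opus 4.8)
The plan is to combine the ``almost Fej\'er'' decrease inequality of Lemma~\ref{lem2} with the subsequence of well-behaved stepsizes furnished by Lemma~\ref{lem_bound}(ii), and then upgrade subsequential convergence to convergence of the whole sequence by an Opial-type argument. First I would apply Lemma~\ref{lem2} for a generic $(\bar x,\bar y)\in\Omega$ and invoke Fact~\ref{fact_ab}: this gives that $\lim_{n\to\infty}a_n$ exists and $b_n\to0$. Since $a_n$ is bounded and $\psi\in(1,\phi)$, $\beta>0$, the sequences $\{z_n\}$ and $\{y_n\}$ are bounded, and then the identity $x_n=\frac{\psi}{\psi-1}z_{n+1}-\frac{1}{\psi-1}z_n$ (which follows from \eqref{z_updating}, as in \eqref{x-to-z}) shows $\{x_n\}$ is bounded too. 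The catch is that $b_n\to0$ by itself does \emph{not} yield $\|z_{n+1}-x_n\|\to0$, $\|x_{n+1}-x_n\|\to0$ or $\|y_n-y_{n-1}\|\to0$ in general, because the coefficient $\psi\delta_n$ may degenerate along some indices. This is precisely where Lemma~\ref{lem_bound}(ii) is used: fixing any $\rho\in(0,1)$, there is an infinite subsequence $\{n_k\}$ along which $\tau_{n_k}\ge\underline\tau>0$ and $\delta_{n_k}\ge\rho$, so that $b_{n_k}\to0$ does force $\|z_{n_k+1}-x_{n_k}\|\to0$, $\|x_{n_k+1}-x_{n_k}\|\to0$ and $\|y_{n_k}-y_{n_k-1}\|\to0$.

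Using the boundedness established above, I would then pass to a further subsequence of $\{n_k\}$ (not relabelled) such that $(x_{n_k},y_{n_k})\to(x^*,y^*)$ for some $(x^*,y^*)$; by the three limits just obtained this also gives $x_{n_k+1}\to x^*$, $z_{n_k+1}\to x^*$ and $y_{n_k-1}\to y^*$. Next I would write down the optimality conditions of the proximal updates \eqref{x_updating} and \eqref{y1_updating} through Fact~\ref{fact_proj} (with strong convexity modulus $0$):
\[
\tfrac{1}{\tau_{n_k}}(z_{n_k+1}-x_{n_k+1})-K^\top y_{n_k}\in\partial g(x_{n_k+1}),
\qquad
\tfrac{1}{\beta\tau_{n_k}}(y_{n_k-1}-y_{n_k})+Kx_{n_k}\in\partial f^*(y_{n_k}).
\]
Because $\tau_{n_k}\ge\underline\tau$, the leading residual terms $\tfrac{1}{\tau_{n_k}}(z_{n_k+1}-x_{n_k+1})$ and $\tfrac{1}{\beta\tau_{n_k}}(y_{n_k-1}-y_{n_k})$ both tend to $0$; letting $k\to\infty$ and using the closedness of the graphs of $\partial g$ and $\partial f^*$ (a consequence of lower semicontinuity of $g$ and $f^*$) yields $-K^\top y^*\in\partial g(x^*)$ and $Kx^*\in\partial f^*(y^*)$, i.e.\ $(x^*,y^*)\in\Omega$ by the characterization of $\Omega$.

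To conclude convergence of the whole sequence, I would apply Lemma~\ref{lem2} once more, this time with $(\bar x,\bar y)=(x^*,y^*)$ (legitimate since Lemma~\ref{lem2} holds for every element of $\Omega$), so that $\lim_{n\to\infty}a_n$ exists for this choice of saddle point. Along the chosen subsequence, $z_{n_k+1}\to x^*$ and $y_{n_k-1}\to y^*$ give $a_{n_k}\to0$, hence the whole limit must be $0$; therefore $z_{n+1}\to x^*$ and $y_{n-1}\to y^*$ for the full sequence, and using $x_n=\frac{\psi}{\psi-1}z_{n+1}-\frac{1}{\psi-1}z_n$ (together with $z_n\to x^*$) gives $x_n\to x^*$, while $y_{n-1}\to y^*$ gives $y_n\to y^*$. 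Thus $(x_n,y_n)\to(x^*,y^*)\in\Omega$, which is a solution of \eqref{saddle_point}.

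The step I expect to be the main obstacle is the one dictated by Lemma~\ref{lem_bound}: in contrast to the linesearch analysis of \cite{Malitsky2018A}, where the stepsizes are uniformly bounded below, here both $\{\tau_n\}$ and the ratios $\{\delta_n\}$ are controlled only along a subsequence. Consequently every part of the argument --- extracting a cluster point from the iterates, passing to the limit in the two subdifferential inclusions (which needs exactly the lower bound $\underline\tau$ to kill the residual terms), and the final Opial-type passage --- must be funneled through that subsequence, and some care is required to verify that $\tau_{n_k}\ge\underline\tau$ and $\delta_{n_k}\ge\rho$ are indeed enough to make each limit go through.
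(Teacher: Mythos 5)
Your proposal is correct and follows essentially the same route as the paper's proof: Lemma~\ref{lem2} plus Fact~\ref{fact_ab} for boundedness and existence of $\lim a_n$, Lemma~\ref{lem_bound}(ii) to obtain a subsequence with $\tau_{n_k}\ge\underline\tau$ and $\delta_{n_k}\ge\rho$ along which the displacements vanish, passage to the limit in the proximal optimality conditions to identify a cluster point in $\Omega$, and finally re-instantiating $a_n$ at $(x^*,y^*)$ together with its monotonicity to upgrade to convergence of the whole sequence. The only cosmetic difference is that you deduce the vanishing displacements directly from $b_{n_k}\to 0$ and $\delta_{n_k}\ge\rho$ (arguably slightly cleaner than the paper's summation argument), and you phrase the limit passage via closedness of the subdifferential graphs rather than via the subgradient inequalities and lower semicontinuity; both are equivalent here.
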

\begin{proof}
Let $\rho\in(0,1)$ and $\underline{\tau} > 0$ be defined in Lemma \ref{lem_bound}.
By (ii) of Lemma \ref{lem_bound}, there exists an infinite sequence $\{n_k: k\ge 1\}$ such that
 $\tau_{n_k} \geq \underline{\tau}$ and $\delta_{n_k} \ge \rho$.
By Lemma \ref{lem2}, we have
\begin{eqnarray}\label{lim-1}
\sum_{k=1}^\infty \delta_{n_k} \left(\|z_{n_k+1} - x_{n_k}\|^2 + (1-\sigma)\|x_{n_k+1} - x_{n_k}\|^2 \right)  \leq \sum_{n=1}^\infty b_n < \infty.
\end{eqnarray}
Since $\lim_{k \to \infty} \sum_{k=1}^{\infty} \delta_{n_k} = \infty$,
which together with \eqref{lim-1} implies that
 there exists a subsequence of $\{n_k: k \geq 1\}$, still denoted
 as $\{n_k: k \geq 1\}$, such that
\begin{equation}\label{lim-2}
\lim\limits_{k \rightarrow\infty} \|z_{n_k+1} - x_{n_k}\| = 0
\quad \mbox{ and } \quad \lim\limits_{n\rightarrow\infty}\|x_{n_k+1}-x_{n_k}\|=0.
\end{equation}
In addition, it follows from Lemma \ref{lem2} and Fact \ref{fact_ab} that $\lim\limits_{n\rightarrow\infty}\|y_{n+1}-y_n\|=0$, $\lim\limits_{n\rightarrow\infty}\|z_n-{\bar x}\|$ and $\lim\limits_{n\rightarrow\infty}\|y_n-{\bar y}\|$ exist.
Thus,   $\{z_{n}: n\geq 1\}$ and $\{y_{n}: n\geq 1\}$ are bounded and, by \eqref{z_updating}, so is $\{x_{n}: n\geq 1\}$.
Therefore, there exist a subsequence of $\{n_k: k\geq 1\}$, still denoted as
$\{n_k: k\geq 1\}$, and $(x^*,y^*)$ such that
$\lim\limits_{k\rightarrow\infty} x_{n_k} = x^*$ and $\lim\limits_{k\rightarrow\infty} y_{n_k} = y^*$,
which together with \eqref{lim-2} also implies that $\lim_{k \to \infty} x_{n_k + 1} = \lim_{k \to \infty} z_{n_k+1} = x^*$.
Now, similar to \eqref{temp_01} and \eqref{temp_001}, for any $(x,y)$, there hold
\begin{equation} \label{sub-optmal}
\left\{\ba{l}
\langle x_{n_k+1}-z_{n_k+1} + \tau_{n_k} K^\top y_{n_k}, ~ x-x_{n_k+1}\rangle\geq \tau_{n_k} \big(g(x_{n_k+1})-g(x)\big), \smallskip \\
\big\langle {1\over\beta} (y_{n_k}-y_{n_k-1}) - \tau_{n_k} K x_{n_k}, ~y-y_{n_k}\big\rangle\geq \tau_{n_k} \big(f^*(y_{n_k})-f^*(y)\big).
\ea
\right.
\end{equation}
Then, dividing $\tau_{n_k} \geq \underline{\tau} > 0$ from both sides of \eqref{sub-optmal},
taking into account  that both $g$ and $f^*$ are closed (and thus lower semicontinuous) and
letting $k\rightarrow\infty$, we obtain
\begin{equation} \label{thm12-1}
\langle  K^\top y^*, ~ x-x^*\rangle\geq  g(x^*)-g(x) \text{~~and~~} - \langle    K x^*, ~y-y^*\rangle\geq  f^*(y^*)-f^*(y).
\end{equation}
Since \eqref{thm12-1} holds for any $(x,y)\in \R^q\times \R^p$, we have $(x^*,y^*)\in \Omega$.
Note that Lemma \ref{lem2} holds for any $(\bar{x},\bar{y})\in \Omega$.
Therefore, $(\bar{x},\bar{y})$ can be replaced by $(x^*,y^*)$
in the definition of $\{a_n: n\geq 1\}$ in \eqref{a_n}.
As such, we have $\lim_{k\rightarrow\infty} a_{n_k} = 0$ since
$\lim_{k\rightarrow\infty} z_{n_k+1} = x^*$ and
$\lim_{k\rightarrow\infty} y_{n_k-1} = \lim_{k\rightarrow\infty} y_{n_k} = y^*$.
Since $\{a_n:  n\geq1\}$ is monotonically nonincreasing, it follows that $\lim_{n\rightarrow\infty} a_n = 0$. Therefore,
$\lim_{n\rightarrow\infty}(z_{n},y_{n}) = (x^*,y^*)$.
Again by \eqref{z_updating}, we have $\lim_{n\rightarrow\infty}x_n = x^*$.
This completes the proof.
\end{proof}

We now establish an $\cO(1/N)$  ergodic sublinear convergence rate of Algorithm \ref{algo2}.

\begin{thm}[Sublinear convergence]\label{thm22}
Let $\{(z_n,x_n,y_n,\tau_n): n\geq 1\}$ be the sequence generated by Algorithm \ref{algo2}
 and $\underline{c} >0$ be the constant given in Lemma~\ref{lem_bound}.
For any $N\geq 1$, define
\be\label{definition_XY}
X_N=\frac{1}{S_N}\sum_{n=1}^N\tau_n x_n~~\mbox{and}~~Y_N=\frac{1}{S_N} \sum_{n=1}^N\tau_n y_n~~\mbox{with}~~S_N=\sum_{n=1}^N\tau_n.
\ee
Then, it holds that
\begin{equation}\label{gap-sublinear}
G(X_N,Y_N)  \leq \frac{1}{2 \underline{c} N}\Big(\frac{\psi}{\psi-1}\|z_{2}-\bar{x}\|^2+\frac{1}{\beta}\|y_{0}-\bar{y}\|^2\Big).
\end{equation}
\end{thm}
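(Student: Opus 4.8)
The plan is to sum the one-step inequality from Lemma~\ref{lem2} over $n = 1, \ldots, N$ and exploit the telescoping structure together with the nonnegativity of $b_n$ and $G$. More precisely, going back into the proof of Lemma~\ref{lem2}, inequality \eqref{lem3.3-i2} (before discarding the gap term) already reads
\[
\frac{\psi}{\psi-1}\|z_{n+2}-\bar{x}\|^2+\frac{1}{\beta}\|y_{n}-\bar{y}\|^2+ 2 \tau_n G(x_n,y_n) \leq a_n - \tilde b_n,
\]
where $a_n$ is as in \eqref{a_n} and $\tilde b_n \ge 0$ collects the squared-norm remainder terms. Since $a_{n+1} = \frac{\psi}{\psi-1}\|z_{n+2}-\bar x\|^2 + \frac1\beta\|y_n - \bar y\|^2$, this is exactly $a_{n+1} + 2\tau_n G(x_n,y_n) \le a_n - \tilde b_n \le a_n$, so
\[
2\tau_n G(x_n,y_n) \le a_n - a_{n+1}, \qquad n \ge 1.
\]
Summing from $n=1$ to $N$ telescopes the right-hand side to $a_1 - a_{N+1} \le a_1$, giving $2\sum_{n=1}^N \tau_n G(x_n,y_n) \le a_1 = \frac{\psi}{\psi-1}\|z_2 - \bar x\|^2 + \frac1\beta\|y_0 - \bar y\|^2$.

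The second ingredient is Jensen's inequality: since $G(\cdot,\cdot) = P(\cdot) + D(\cdot)$ with $P$ convex in $x$ and $D$ convex in $y$ (as noted right after \eqref{G}), and $X_N, Y_N$ are convex combinations of the $x_n$'s and $y_n$'s with weights $\tau_n / S_N$, we get
\[
G(X_N, Y_N) = P(X_N) + D(Y_N) \le \frac{1}{S_N}\sum_{n=1}^N \tau_n \big(P(x_n) + D(y_n)\big) = \frac{1}{S_N}\sum_{n=1}^N \tau_n G(x_n,y_n).
\]
Combining this with the telescoped bound yields $G(X_N,Y_N) \le \frac{1}{2 S_N}\big(\frac{\psi}{\psi-1}\|z_2 - \bar x\|^2 + \frac1\beta\|y_0 - \bar y\|^2\big)$.

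Finally I invoke part (iii) of Lemma~\ref{lem_bound}, which guarantees $S_N = \sum_{n=1}^N \tau_n \ge \underline{c} N$, so $1/S_N \le 1/(\underline c N)$, and \eqref{gap-sublinear} follows. The only genuinely nontrivial point — and the one I would be careful about — is the lower bound $S_N \ge \underline c N$: without it, the adaptive linesearch stepsizes could in principle decay and the ``$1/N$'' rate would degrade. That estimate, however, is exactly the content of Lemma~\ref{lem_bound}(iii), which we are entitled to assume; the rest is the standard telescoping-plus-Jensen argument and involves no new obstacle. One minor bookkeeping check is that \eqref{lem3.3-i2} indeed retains the $2\tau_n G(x_n,y_n)$ term on the left before it is dropped at the end of that proof, so that re-deriving the displayed inequality above is legitimate rather than circular.
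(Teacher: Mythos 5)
Your proposal is correct and follows essentially the same route as the paper's proof: both rewrite \eqref{lem3.3-i2} as $2\tau_n G(x_n,y_n)\le a_n-a_{n+1}$, telescope over $n=1,\dots,N$, apply Jensen's inequality via the convexity of $P$ and $D$, and invoke Lemma \ref{lem_bound}(iii) for $S_N\ge \underline{c}N$. Your bookkeeping check that the gap term is retained in \eqref{lem3.3-i2} before being dropped is accurate, so there is no circularity.
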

\begin{proof}
First, it follows from \eqref{lem3.3-i2} and the definition of $b_n$ in \eqref{b_n} that
\ben
2 \tau_n  G(x_n,y_n)
 \leq \frac{\psi}{\psi-1} \big( \|z_{n+1}-\bar{x}\|^2 - \|z_{n+2}-\bar{x}\|^2\big)
 +\frac{1}{\beta} \big(\|y_{n-1}-\bar{y}\|^2-  \|y_{n}-\bar{y}\|^2 \big).
\een
By taking summation over $n = 1, \ldots, N$, we obtain
\be
 2\sum_{n=1}^N  \tau_n  G(x_n,y_n) \leq
\frac{\psi}{\psi-1}\|z_{2}-\bar{x}\|^2+\frac{1}{\beta}\|y_{0}-\bar{y}\|^2. \label{thm2-1}
\ee
Since $G(x,y) = P(x) + D(y)$ and $P(\cdot)$ and $D(\cdot)$ are convex,
it follows that
\be
\sum_{n=1}^N \tau_n  G(x_n,y_n)=\sum_{n=1}^N \tau_n  P(x_n)+\sum_{n=1}^N \tau_n D(y_n) \geq S_N\big(P(X_N)+D(Y_N)\big), \label{thm2-2}
\ee
where $S_N$, $X_N$ and $Y_N$ are defined in (\ref{definition_XY}).
Combining \eqref{thm2-1} and \eqref{thm2-2}, we obtain
\ben
G(X_N,Y_N) = P(X_N)+D(Y_N)\leq \frac{1}{2S_N}\big(\frac{\psi}{\psi-1}\|z_{2}-\bar{x}\|^2+\frac{1}{\beta}\|y_{0}-\bar{y}\|^2\big).
\een
By property (iii) in Lemma~\ref{lem_bound}, we have $S_N = \sum_{n=1}^N\tau_n \geq \underline{c} N$.
Hence, \eqref{gap-sublinear} holds.
\end{proof}

%

\section{GRPDA with linesearch --- Strongly convex case}
\label{sec-acc-L}

In this section, we introduce linesearch strategy into GRPDA \eqref{GRPDA} when  either $g$ or $f^*$ or both are strongly convex.
When $g$ or $f^*$ is strongly convex, it was shown in \cite{Chambolle2011A} that one can adaptively choose the primal and the dual stepsizes, as well as the inertial parameter, so that the PDA \eqref{pda_CP} achieves a faster $\cO(1/N^2)$ convergence rate. Similar results have been obtained in \cite{Malitsky2018A} for the PDA with linesearch. In this section, we propose an adaptive linesearch strategy
for GRPDA \eqref{GRPDA}.

Apparently, the minimax problem \eqref{saddle_point} is equivalent to
\be\label{saddle_point2}
\max_{x\in \R^q}\min_{y\in \R^p} ~~f^*(y) + \langle -K^\top y, x \rangle  -g(x).
\ee
Then, by swapping ``$\max_{x\in \R^q}$" with ``$\min_{y\in \R^p}$" and $(g,K,x,q)$ with $(f^*,-K^T,y,p)$,
\eqref{saddle_point2} is reducible to \eqref{saddle_point}.
Therefore, we only present the analysis for the case when $g$ is strongly convex in Section \ref{sec:g-str}, while the case when $f^*$ is strongly convex can be treated similarly and is thus omitted.
Finally, we establish in Section \ref{sec-gf-strongly} nonergodic linear convergence results when both $g$ and $f^*$ are strongly convex.

\subsection{The case when $g$ is strongly convex}
\label{sec:g-str}
In this subsection, we assume that $g$ is $\gamma_g$-strongly convex, i.e., it holds  for some $\gamma_g>0$  that
\ben
g(y) \geq g(x) + \langle u, y-x\rangle +
\frac{\gamma_g}{2}\|y-x\|^2,~~\forall \, x, y\in \R^q, \; \forall\, u\in \partial g(x),
\een
where the strongly convex parameter $\gamma_g>0$ is known in our algorithm.
Then, the following algorithm that incorporates linesearch into GRPDA \eqref{GRPDA} and
exploits the strong convexity of $g$ achieves faster convergence.

\vskip5mm
\hrule\vskip2mm
\begin{algo}
[Accelerated GRPDA with linesearch when $g$ is $\gamma_g$-strongly convex]\label{algo-acc-L}
{~}\vskip 1pt {\rm
\begin{description}
\item[{\em Step 0.}] Let $\psi_0 = 1.3247...$ be the unique real root of $\psi^3-\psi-1 = 0$. Choose $\psi\in(\psi_0,\phi)$, $\beta_0>0$, $\tau_0>0$ and $\mu\in(0,1)$.
    Choose $x_0=z_{0}\in \bR^q$ and $y_0\in \bR^p$.
    Set $\varphi=\frac{1+\psi}{\psi^2}$ and $n=1$.
\item[{\em Step 1.}]Compute
\be
z_{n}&=& \frac{\psi-1}{\psi} x_{n-1} + \frac{1}{\psi}z_{n-1}, \label{z-gstrong} \\
x_{n}&=&\prox_{\tau_{n-1} g}(z_{n}-\tau_{n-1} K^\top y_{n-1}), \label{x-gstrong}\\
 \omega_n&=&\frac{\psi-\varphi}{\psi+\varphi\gamma_g\tau_{n-1}},\label{omega-gstrong}\\
 \beta_{n}&=&\beta_{n-1}(1+ \gamma_g\omega_n \tau_{n-1}). \label{beta-gstrong}
\ee
\item[{\em Step 2.}] Let $\tau= \varphi\tau_{n-1}$ and compute
\be\label{y-gstrong}
   y_{n}=\prox_{\beta_n\tau_{n} f^*}(y_{n-1}+\beta_n\tau_{n} Kx_{n}),
\ee
where $\tau_n = \tau \mu^i$ and $i$ is the smallest nonnegative integer such that
\be\label{y-ls-gstrong}
\sqrt{\beta_n\tau_{n}}\|K^\top y_{n}-K^\top y_{n-1}\|\leq \sqrt{\psi / \tau_{n-1}}\|y_{n}-y_{n-1}\|.
\ee
\item[{\em Step 3.}] Set $n\leftarrow n + 1$ and return to Step 1.
  \end{description}
}
\end{algo}
\vskip1mm\hrule\vskip5mm

\begin{rem}
As in \cite[Algorithm 4.1]{ChY2020Golden}, $\omega_n$ is used to update $\beta_n$, which plays a key role in establishing the $\cO(1/N^2)$ ergodic convergence rate of Algorithm \ref{algo-acc-L}. The condition $\psi > \psi_0$, where $\psi_0$ is the unique real root of $\psi^3-\psi-1=0$, ensures that $\psi> (1+\psi)/\psi^2 = \varphi$. Therefore, we have $\omega_n>0$ and
$\beta_{n} > \beta_{n-1} > 0$ for all $n\geq 1$.
\end{rem}

Next, we state a key lemma with respect to  Algorithm~\ref{algo-acc-L}, whose detailed proof is left to Appendix \ref{proof:lem-beta-cnn}
since it takes similar spirit with 
that of Lemma~\ref{lem_bound}.

\begin{lem}\label{lem-beta-cnn}
  Let $\{(\tau_n,\beta_n): n\geq 0\}$ be the sequence generated by Algorithm \ref{algo-acc-L}. Then, we have the following properties.
(i) The linesearch step of Algorithm~\ref{algo-acc-L}, i.e., Step 2, always terminates.
(ii)  There exists constant $c>0$ such that $\beta_n \geq c n^2$ for all $n\geq 1$.
(iii)  There exists a constant $\tilde{c} >0$ such that for any $N \ge 1$, we have $\sum_{n=1}^N \tau_n \le \tilde{c} \sum_{n \in \mathcal{S}_N} \tau_n$, where $\mathcal{S}_N := \left\{1 \le n \le N : \sqrt{\beta_{n}} \tau_n \ge 1/L \right\}$.
\end{lem}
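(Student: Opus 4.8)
The plan is to follow the template of the appendix proof of Lemma~\ref{lem_bound}, adapted to the two coupled sequences $\{\tau_n\}$ and $\{\beta_n\}$ produced by Algorithm~\ref{algo-acc-L}. First I would dispose of (i) and, along the way, extract a one-step lower bound on $\tau_n$: since $\beta_n$ is computed in Step~1 from $\tau_{n-1}$ via \eqref{omega-gstrong}--\eqref{beta-gstrong}, it is fixed before the linesearch, and because $\|K^\top y_n-K^\top y_{n-1}\|\le L\|y_n-y_{n-1}\|$, every trial stepsize $\tau$ with $\beta_n\tau\tau_{n-1}L^2\le\psi$ passes the test \eqref{y-ls-gstrong}; as the trial values $\tau=\varphi\tau_{n-1}\mu^i$ decrease to $0$ with $i$, Step~2 terminates after finitely many backtracks, and the accepted $\tau_n$ satisfies $\tau_n\ge\min\{\varphi\tau_{n-1},\ \mu\psi/(\beta_n\tau_{n-1}L^2)\}$. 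This bound will play the role that the constant $\underline\tau$ played in Lemma~\ref{lem_bound}. In parallel, substituting \eqref{omega-gstrong} into \eqref{beta-gstrong} gives the closed form $\beta_n/\beta_{n-1}=\psi(1+\gamma_g\tau_{n-1})/(\psi+\varphi\gamma_g\tau_{n-1})$, which lies strictly between $1$ and $\psi/\varphi$ since $\varphi=(1+\psi)/\psi^2<\psi$ (this inequality is exactly what $\psi>\psi_0$ guarantees); in particular $\{\beta_n\}$ is strictly increasing and $\beta_n\le(\psi/\varphi)\beta_{n-1}$.

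For (iii), note that $n\notin\mathcal{S}_N$ means $\beta_n\tau_n^2L^2<1$. If $n\notin\mathcal{S}_N$ then $\varphi\beta_{n+1}\tau_n^2L^2\le\psi\beta_n\tau_n^2L^2<\psi$, so the \emph{first} trial at iteration $n+1$ already passes and $\tau_{n+1}=\varphi\tau_n$. Hence along any maximal run of consecutive indices outside $\mathcal{S}_N$ the stepsizes form a geometric progression of ratio $\varphi>1$, so the total $\tau$-mass of the run is at most $1/(\varphi-1)$ times the stepsize at the first $\mathcal{S}_N$-index immediately following it (a run that reaches $N$, if any, being handled by the run-length bound obtained below). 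Since distinct runs are followed by distinct indices of $\mathcal{S}_N$, summing over all runs gives $\sum_{1\le n\le N,\ n\notin\mathcal{S}_N}\tau_n\le\frac{1}{\varphi-1}\sum_{n\in\mathcal{S}_N}\tau_n$, i.e.\ (iii) with $\tilde c=\varphi/(\varphi-1)$.

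For (ii), the closed form together with the elementary inequality $\sqrt{1+t}-1\ge t/(2\sqrt{\psi/\varphi})$ valid for $t\in[0,(\psi-\varphi)/\varphi]$ would give a constant $c_0>0$, depending only on $\psi$ and $\varphi$, such that $\sqrt{\beta_n}-\sqrt{\beta_{n-1}}\ge c_0\sqrt{\beta_{n-1}}\,\gamma_g\tau_{n-1}/(\psi+\varphi\gamma_g\tau_{n-1})$. Whenever $\sqrt{\beta_{n-1}}\tau_{n-1}\ge1/L$ or $\gamma_g\tau_{n-1}\ge1$, the right-hand side is bounded below by an absolute constant $c_1>0$; calling such $n$ \emph{productive}, one then has $\sqrt{\beta_N}\ge\sqrt{\beta_0}+c_1\,|\{1\le n\le N:\ n\ \text{productive}\}|$. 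It remains to show that a fixed positive fraction of the indices is productive. A non-productive $n$ has $\sqrt{\beta_{n-1}}\tau_{n-1}<1/L$ and $\gamma_g\tau_{n-1}<1$, hence (exactly as in (iii)) $\tau_n=\varphi\tau_{n-1}$, so along a maximal run of non-productive indices both $\gamma_g\tau_n$ and $\sqrt{\beta_n}\tau_nL$ get multiplied by $\varphi$ at every step, which forces the run to terminate after a bounded number of steps once its starting stepsize is bounded from below, and such a lower bound at the start of each run is provided by the one-step bound together with the productivity of the index preceding the run. Consequently $|\{1\le n\le N:\ n\ \text{productive}\}|\ge c_2N$ and therefore $\beta_N\ge(c_1c_2)^2N^2$.

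The part I expect to be the main obstacle --- the analogue of the technical core of Lemma~\ref{lem_bound} --- is making the run-length control fully rigorous. The one-step bound only prevents $\tau_n$ from falling below roughly $1/(\beta_n\tau_{n-1}L^2)$, and since $\{\tau_n\}$ and $\{\beta_n\}$ are defined through one another, one must carefully dichotomize according to whether $\tau_{n-1}$ is ``large'', in which case $\beta_n$ already grows geometrically and (ii) is immediate, or ``moderate'', in which case the geometric-progression arguments above apply, and then patch the two regimes together to obtain absolute bounds on the lengths of the runs outside $\mathcal{S}_N$ and of the non-productive runs. Because this bookkeeping is lengthy, I would relegate it to Appendix~\ref{proof:lem-beta-cnn}, as already announced in the statement.
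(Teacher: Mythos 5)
Your part (i) and the one-step lower bound $\tau_n \ge \min\{\varphi\tau_{n-1},\ \mu\psi/(\beta_n\tau_{n-1}L^2)\}$ are correct and match the paper's \eqref{AAA}, and your observation that $n\notin\mathcal{S}_N$ forces $\tau_{n+1}=\varphi\tau_n$ is exactly the paper's property \eqref{property-A-strong}. Your geometric-sum bound $\sum_{\text{run}}\tau_n\le\tau_k/(\varphi-1)$ for a bad run followed by a good index $k\le N$ is in fact a slicker route to (iii) than the paper's cardinality argument for those runs. But the proposal has a genuine gap precisely where you flag the ``main obstacle'': the run-length control. You assert that a maximal non-productive run ``terminates after a bounded number of steps'' because its starting stepsize is ``bounded from below'' by the one-step bound plus productivity of the preceding index, and later that one can ``obtain absolute bounds on the lengths of the runs.'' This is false. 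If the backtracking at the first index $m$ of a bad run uses $j$ reductions, the failure of the $(j-1)$st trial gives only $\sqrt{\beta_m}\,\tau_m \ge \sqrt{\psi\varphi}\,\mu^{(j+1)/2}/L$, so the starting value — and hence the run length, which is of order $j\log_\varphi(1/\mu)$ — degrades without bound as $j$ grows, and $j$ is not a priori bounded. No absolute bound on run lengths exists, so your counting of productive indices (and your treatment of a terminal bad run in (iii)) does not close.

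The missing idea, which is the technical core of the paper's Appendix~\ref{proof:lem-beta-cnn}, is a \emph{relative} rather than absolute bound: since $\beta_{n+1}\le(\psi/\varphi)\beta_n$ and $\tau_{n+1}\le\varphi\tau_n$, the quantity $\sqrt{\beta_n}\tau_n$ can increase by at most the fixed factor $\sqrt{\psi\varphi}$ per iteration, and at the first index of every good block it is below $\sqrt{\psi\varphi}/L$ (the paper's \eqref{XXX} and property (d)). Hence, for a depth-$j$ backtrack to occur — the only way a long bad run can start — the quantity must first have climbed to roughly $\mu^{-(j-1)/2}/L$, which requires the preceding good run to have length at least of order $j$ (the paper's \eqref{CCC}). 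This yields $m_i-k_i\ge\bar c\,(k_{i+1}-m_i+1)$, i.e., each bad run is short \emph{relative to the good run preceding it}, and it is this proportionality that delivers both the linear-in-$N$ count of good indices needed for $\beta_n\ge cn^2$ and the bound on a terminal bad run in (iii). Your dichotomy on whether $\tau_{n-1}$ is large or moderate does not substitute for this, because the dangerous scenario is exactly a deep backtrack from a large value of $\sqrt{\beta_{n-1}}\tau_{n-1}$ down to a tiny one; without the growth-rate/starting-value argument for $\sqrt{\beta_n}\tau_n$ across the preceding good block, the bookkeeping you defer to the appendix cannot be completed.
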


We next establish the promised $\cO(1/N^2)$ ergodic convergence rate.
Note that in this case $\bar{x}$ is unique and will be simply denoted as $x^*$.
\begin{thm}[Accelerated sublinear convergence]
  \label{thm:gstrong}
 Let $\{(z_n,x_n,y_n, \beta_n, \tau_n): n\geq 1\}$ be the sequence generated by Algorithm \ref{algo-acc-L}.
Then, the following holds: \\
(a)   there exist constants $C_1, C_2 > 0$ such that
$\|z_{n+1}-x^*\| \leq C_1/n$ and  $\|x_{n+1}-x^*\| \leq C_2/n$ for all $n\geq 1$; \\
(b)  the sequence $\{y_n: n \ge 1\}$ is bounded and there exists a subsequence of
$\{y_n: n \ge 1\}$ converging to $y^*$ such that $(x^*, y^*)$ is a solution of (\ref{saddle_point}); \\
(c)  there exist a constant $C_3 >0$ such that  $G(X_N,Y_N) \leq C_3/N^2$ for any integer $N \ge 1$, where
\be\label{x_n}
S_N=\sum_{n=1}^N \beta_{n}\tau_{n}, ~~
X_N=\frac{1}{S_N}\sum_{n=1}^N \beta_{n}\tau_{n}x_n \text{~~and~~}
Y_N=\frac{1}{S_N}\sum_{n=1}^N \beta_{n}\tau_{n}y_n.
\ee
\end{thm}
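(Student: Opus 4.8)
The plan is to mirror the structure of the convex-case analysis (Lemmas~\ref{lem1}, \ref{lem2} and Theorems~\ref{thm12}, \ref{thm22}) but carry the strong-convexity term through, and then exploit the $\beta_n = \Theta(n^2)$ growth from Lemma~\ref{lem-beta-cnn}(ii). First I would prove a strongly-convex analogue of Lemma~\ref{lem1}: applying Fact~\ref{fact_proj} to the prox step \eqref{x-gstrong} now yields an extra $\frac{\gamma_g}{2}\|\bar x - x_{n+1}\|^2$ (and similarly when comparing $x_n$ and $x_{n+1}$), so that after the same rearrangement one gets a version of \eqref{lem1:ineq} in which the left side is $\tau_n G(x_n,y_n) + \frac{\gamma_g\tau_n}{2}\|x_{n+1}-\bar x\|^2$ and the right side carries the weight $\beta_n$ in the dual inner-product term, i.e. $\frac{1}{\beta_n}\langle y_n-y_{n-1},\bar y-y_n\rangle$ multiplied through by $\beta_n\tau_n$. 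The choice \eqref{omega-gstrong}--\eqref{beta-gstrong} of $\omega_n$ and $\beta_n$ is precisely engineered so that the $\gamma_g$-term from iteration $n$ telescopes against the change in the $\beta_n$-weighted energy; this is where I would use the identity $\beta_n(1+\gamma_g\omega_n\tau_{n-1})=\beta_{n+1}$ rewritten appropriately, exactly as in \cite[Algorithm~4.1, Lemma~4.2]{ChY2020Golden}.

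Next, combining this with identities \eqref{id}, \eqref{id2}, the substitution $x_{n+1}=\frac{\psi}{\psi-1}z_{n+2}-\frac{1}{\psi-1}z_{n+1}$ as in \eqref{x-to-z}, the linesearch inequality \eqref{y-ls-gstrong} (used via Cauchy--Schwarz as in \eqref{lem3.3-c2}), and the key algebraic fact $1+\frac1\psi-\psi\delta_n\ge 0$, I would obtain a one-step descent inequality of the form
\[
E_{n+1} + 2\beta_n\tau_n G(x_n,y_n) \;\le\; E_n - (\text{nonnegative terms}),
\]
where $E_n = \frac{\psi}{\psi-1}\beta_{n-1}\widehat\mu_{n}\|z_{n+1}-x^*\|^2 + \frac{1}{1}\|y_{n-1}-y^*\|^2$ for a suitable scalar weight $\widehat\mu_n$ incorporating $\gamma_g$ and $\tau$ (again following the bookkeeping of \cite{ChY2020Golden}). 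Boundedness of $\{E_n\}$ then gives $\beta_n\|z_{n+1}-x^*\|^2 = O(1)$, and since $\beta_n\ge cn^2$ this yields $\|z_{n+1}-x^*\|\le C_1/n$; then \eqref{z-gstrong} (a convex combination) propagates the bound to $\|x_{n+1}-x^*\|\le C_2/n$, proving (a). For (b), $\{E_n\}$ bounded also bounds $\{\|y_{n-1}-y^*\|\}$, hence $\{y_n\}$ is bounded; extracting a convergent subsequence $y_{n_k}\to y^*$ along indices with $\tau_{n_k}$ bounded below (available from Lemma~\ref{lem-beta-cnn}(iii), analogously to Lemma~\ref{lem_bound}(ii)) and passing to the limit in the prox optimality conditions \eqref{x-gstrong}, \eqref{y-gstrong} exactly as in the proof of Theorem~\ref{thm12} shows $(x^*,y^*)\in\Omega$.

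For (c), summing the descent inequality over $n=1,\dots,N$ telescopes $E_n$ and leaves $2\sum_{n=1}^N\beta_n\tau_n G(x_n,y_n)\le E_1$, a constant. Convexity of $P$ and $D$ and Jensen's inequality with the weights $\beta_n\tau_n/S_N$ give $G(X_N,Y_N)\le E_1/(2S_N)$ with $S_N=\sum_{n=1}^N\beta_n\tau_n$. It then remains to show $S_N\ge c' N^3$ (so that $E_1/(2S_N)=O(1/N^2)$ is actually weaker than needed—wait, we only need $S_N \gtrsim N^2$): using Lemma~\ref{lem-beta-cnn}(iii), $\sum_{n=1}^N\beta_n\tau_n \ge \frac1{\tilde c}\cdot$(something)\ldots more carefully, on $\mathcal S_N$ one has $\sqrt{\beta_n}\tau_n\ge 1/L$, hence $\beta_n\tau_n \ge \sqrt{\beta_n}/L \ge \sqrt{c}\,n/L$ on $\mathcal S_N$, and by (iii) the sum over $\mathcal S_N$ controls a fixed fraction of $\sum_{n=1}^N\tau_n$; combined with $\beta_n\ge cn^2$ this delivers $S_N\ge c' N^3 \ge c' N^2$, giving $G(X_N,Y_N)\le C_3/N^2$.

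\textbf{Main obstacle.} The delicate part is the energy bookkeeping in the strongly-convex Lemma~\ref{lem1}/Lemma~\ref{lem2} analogue: getting the $\gamma_g$-term to telescope requires choosing the weight multiplying $\|z_{n+1}-x^*\|^2$ to be $\beta_{n-1}$ times a factor that changes by exactly $(1+\gamma_g\omega_n\tau_{n-1})$ per step, and simultaneously verifying that all the leftover squared terms ($\|z_{n+1}-x_n\|^2$, $\|x_{n+1}-x_n\|^2$, $\|y_n-y_{n-1}\|^2$, $\|x_{n+1}-z_{n+1}\|^2$) have nonnegative coefficients — the latter forcing the constraint $\psi>\psi_0$ (root of $\psi^3-\psi-1$) rather than merely $\psi\in(1,\phi)$, since one needs $\psi>\varphi=(1+\psi)/\psi^2$, i.e. $\psi^3>1+\psi$. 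This is the same subtlety already handled in \cite{ChY2020Golden} for GRPDA without linesearch, but here it must be reconciled with the \emph{variable} $\tau_n$ produced by the linesearch, which is exactly why Lemma~\ref{lem-beta-cnn} is needed and why the stepsize lower bound holds only along a subsequence.
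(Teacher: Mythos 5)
Your overall architecture — carrying the $\gamma_g$-term through the one-step inequality, telescoping a $\beta_n$-weighted energy so that \eqref{beta-gstrong} absorbs the strong-convexity gain, and then invoking $\beta_n\ge cn^2$ — is exactly the paper's route for part (a) and for the Jensen step in (c). However, part (b) of your proposal has a genuine gap. You claim a subsequence with ``$\tau_{n_k}$ bounded below'' is available from Lemma~\ref{lem-beta-cnn}(iii), analogously to Lemma~\ref{lem_bound}(ii). It is not: Lemma~\ref{lem-beta-cnn}(iii) only controls the set where $\sqrt{\beta_n}\,\tau_n\ge 1/L$, and since $\beta_n\to\infty$ this is entirely compatible with $\tau_n\to 0$ (indeed $\tau_n\sim 1/n$ is the expected behavior of the accelerated scheme). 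Without a uniform positive lower bound on $\tau_{n_k}$, you cannot pass to the limit in the primal optimality condition: after dividing by $\tau_{n_k}$ you must show $\|x_{n_k+1}-z_{n_k+1}\|/\tau_{n_k}\to 0$, and the $O(1/n)$ rate from (a) together with $\tau_{n_k}\ge 1/(L\sqrt{\beta_{n_k}})$ is insufficient because $\beta_n$ carries no matching $O(n^2)$ upper bound. Note also that the convex-case escape route is closed here: the linesearch \eqref{y-ls-gstrong} is run with effective $\sigma=1$, so after Cauchy--Schwarz the coefficients of $\|x_{n+1}-x_n\|^2$ and $\|y_n-y_{n-1}\|^2$ vanish exactly and the only surviving nonnegative remainder in \eqref{key-ineq2} is $\frac{\beta_n\gamma_g\tau_n}{2\psi}\|x_{n+1}-z_{n+1}\|^2$. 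The paper's proof builds the needed subsequence precisely from this term: summability in \eqref{beta-A} plus the rewriting $\beta_n\tau_n\|x_{n+1}-z_{n+1}\|^2=\beta_n\tau_n^3\bigl(\|x_{n+1}-z_{n+1}\|/\tau_n\bigr)^2\ge L^{-2}\tau_n\bigl(\|x_{n+1}-z_{n+1}\|/\tau_n\bigr)^2$ on $\mathcal{S}$, combined with the divergence $\sum_{n\in\mathcal{S}}\tau_n=\infty$ established through the logarithmic estimate \eqref{sum_SNtn}. This mechanism is absent from your sketch and is the real content of part (b).

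Your argument for the lower bound $S_N\gtrsim N^2$ in (c) also does not close. Lemma~\ref{lem-beta-cnn}(iii) compares weighted sums of $\tau_n$; it gives neither a cardinality bound $|\mathcal{S}_N|\ge \hat c N$ nor any information on where in $[1,N]$ the indices of $\mathcal{S}_N$ sit, and $\sum_{n=1}^N\tau_n$ is only known to grow logarithmically (again \eqref{sum_SNtn}), so the chain ``$\beta_n\tau_n\ge\sqrt{c}\,n/L$ on $\mathcal{S}_N$ plus (iii) gives $S_N\ge c'N^3$'' is unjustified (and $N^3$ is in any case not the true order). The paper's argument is much simpler and avoids $\mathcal{S}_N$ entirely: from \eqref{beta-gstrong} and $\omega_{n+1}<1$, $\beta_n\tau_n=(\beta_{n+1}-\beta_n)/(\omega_{n+1}\gamma_g)\ge(\beta_{n+1}-\beta_n)/\gamma_g$, so $S_N\ge(\beta_{N+1}-\beta_1)/\gamma_g\ge c_1N^2$ by Lemma~\ref{lem-beta-cnn}(ii). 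You should replace your $\mathcal{S}_N$-based estimate with this telescoping bound.
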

\begin{proof}
Since $g$ is strongly convex, it follows from \eqref{x-gstrong} and Fact \ref{fact_proj} that
\begin{eqnarray}\label{ineq-ls-subgrad}
  \langle x_{n}-z_{n} + \tau_{n-1} K^\top y_{n-1}, ~x-x_{n}\rangle \geq \tau_{n-1} \Big(g(x_{n})-g(x)+\frac{\gamma_g}{2}\|x_{n}-x\|^2\Big), \; \forall\, x.
\end{eqnarray}
By passing $n+1$ to $n$ and $\bar x$ to $x$ in \eqref{ineq-ls-subgrad}, we obtain
\be\label{gstrong-ieq2}
\langle x_{n+1}-z_{n+1} + \tau_{n}K^\top y_{n}, ~~ \bar{x}-x_{n+1}\rangle\geq \tau_{n}\Big(g(x_{n+1})-g(\bar{x}) +\frac{\gamma_g}{2} \|x_{n+1}-\bar{x}\|^2\Big).
\ee
Similarly, by passing $x_{n+1}$ to $x$ in \eqref{ineq-ls-subgrad} and multiplying both sides by
$\delta_n =  \tau_n / \tau_{n-1}$, we obtain
\begin{eqnarray}\label{thm4.1-ieq2}
  \langle \delta_n(x_{n}-z_{n}) + \tau_{n} K^\top y_{n-1}, ~ x_{n+1}-x_{n}\rangle &\geq& \tau_{n} \Big(g(x_{n})-g(x_{n+1})+\frac{\gamma_g}{2}\|x_{n+1}-x_{n}\|^2\Big).
\end{eqnarray}
Similar to \eqref{temp_001}, it follows from \eqref{y-gstrong} that
\begin{eqnarray}\label{thm4.1-ieq3}
\big\langle {1\over \beta_n} (y_{n}-y_{n-1}) - \tau_n K x_{n}, ~\bar{y}-y_{n}\big\rangle \geq \tau_n \big(f^*(y_{n})-f^*(\bar{y})\big).
\end{eqnarray}
From \eqref{z-gstrong}, it is easy to derive $x_n-z_n = \psi(x_n - z_{n+1})$.
Then, by adding \eqref{gstrong-ieq2}-\eqref{thm4.1-ieq3} and using similar arguments as in Lemma \ref{lem1}, we obtain
\be\label{stronger-c}
\tau_n G(x_n,y_n)
&\leq&\langle x_{n+1}-z_{n+1}, \bar{x}-x_{n+1}\rangle+
\frac{1}{\beta_{n}} \big\langle y_{n}-y_{n-1}, \bar{y}-y_{n}\big\rangle
+\psi\delta_n \big\langle x_{n}-z_{n+1}, x_{n+1}- x_n\big\rangle \nonumber\\
&&+ \tau_n \langle K^\top y_{n}- K^\top y_{n-1}, x_n-x_{n+1}\rangle
- \frac{\gamma_g\tau_n}{2} \|x_{n+1}-\bar{x}\|^2 - \frac{\gamma_g\tau_n}{2} \|x_{n+1}-x_n\|^2.
\ee
By removing $-{\gamma_g\tau_n\over 2}  \|x_{n+1}-x_n\|^2\leq0$,  using \eqref{id} and Cauchy-Schwarz inequality, we obtain from \eqref{stronger-c} that
\be
&&(1+\gamma_g\tau_n)\|x_{n+1}-\bar{x}\|^2+\frac{1}{\beta_{n}}\|y_{n}-\bar{y}\|^2+ 2 \tau_n G(x_n,y_n)\nonumber\\
&\leq&\|z_{n+1}-\bar{x}\|^2+\frac{1}{\beta_{n}}\|y_{n-1}-\bar{y}\|^2  -\psi\delta_n\|z_{n+1}-x_n\|^2+(\psi\delta_n-1)\|x_{n+1}-z_{n+1}\|^2 \nonumber\\
&& -\psi\delta_{n}\|x_{n+1}-x_n\|^2-\frac{1}{\beta_{n}}\|y_{n}-y_{n-1}\|^2  + 2\tau_n\|K^\top (y_{n}- y_{n-1})\|\|x_{n+1}-x_n\|.
\label{jy-0316-1}
\ee
Plugging in (\ref{x-to-z}) and using  Cauchy-Schwarz inequality, we deduce
\be\label{ineq12}
&&(1+\gamma_g\tau_n)\frac{\psi}{\psi-1}\|z_{n+2}-\bar{x}\|^2
+\frac{1}{\beta_{n}}  \|y_{n}-\bar{y}\|^2+ 2 \tau_n G(x_n,y_n)\nonumber\\
&\leq&\frac{\psi+\gamma_g\tau_n}{\psi-1}\|z_{n+1}-\bar{x}\|^2+\frac{1}{\beta_{n}}\|y_{n-1}-\bar{y}\|^2 +\Big(\psi\delta_n-1-\frac{1+\gamma_g\tau_n}{\psi}\Big)\|x_{n+1}-z_{n+1}\|^2\nonumber\\
&&-\psi\delta_{n}\|z_{n+1}-x_n\|^2
-\psi\delta_{n}\|x_{n+1}-x_n\|^2-\frac{1}{\beta_{n}}\|y_{n}-y_{n-1}\|^2   \nonumber\\
&&+ 2\tau_n \|K^\top y_{n}-K^\top y_{n-1}\|\|x_{n+1}-x_n\|.
\ee
Recall that $\delta_n = \tau_n/\tau_{n-1}$. It follows from \eqref{y-ls-gstrong} and Cauchy-Schwarz inequality that
\be\label{cw-ieq}
2\tau_n \|K^\top y_{n}-K^\top y_{n-1}\|\|x_{n+1}-x_n\|&\leq& \psi\delta_{n}\|x_{n+1}-x_n\|^2+\frac{1}{\beta_{n}}\|y_{n}-y_{n-1}\|^2.
\ee
Furthermore,  $\psi\delta_n-1-\frac{1+\gamma_g\tau_n}{\psi} \leq \psi \varphi -1- \frac{1}{\psi}
- \frac{\gamma_g \tau_n}{\psi}= - \frac{\gamma_g \tau_n}{\psi}$
since $\delta_n\leq\varphi$.
Therefore, (\ref{ineq12}) implies
\be\label{ineq13}
&&(1+\gamma_g\tau_n)\frac{\psi}{\psi-1}\|z_{n+2}-\bar{x}\|^2
+\frac{1}{\beta_{n}}  \|y_{n}-\bar{y}\|^2+ 2 \tau_n G(x_n,y_n)\nonumber\\
&\leq&\frac{\psi+\gamma_g\tau_n}{\psi-1}\|z_{n+1}-\bar{x}\|^2+\frac{1}{\beta_{n}}\|y_{n-1}-\bar{y}\|^2
- \frac{\gamma_g \tau_n}{\psi} \|x_{n+1}-z_{n+1}\|^2.
\ee
Note that $(1+\gamma_g\tau_n)\frac{\psi}{\psi-1} = \frac{\psi(1+\gamma_g\tau_n)}{\psi+\gamma_g\tau_{n+1}} \frac{\psi+\gamma_g\tau_{n+1}}{\psi-1}$.
It follows from $\tau_{n+1}\leq\varphi\tau_{n}$ that
\be\label{relax-coe}
  \frac{\psi(1+\gamma_g\tau_n)}{\psi+\gamma_g\tau_{n+1}}
&\geq& \frac{\psi(1+\gamma_g\tau_n)}{\psi+\gamma_g\varphi\tau_{n}}
= 1+\frac{\psi-\varphi}{{\psi}+\gamma_g\varphi\tau_n}\gamma_g\tau_n
=  1+\omega_{n+1}\gamma_g\tau_n,
\ee
and thus
\be\label{key-ineq}
(1+\gamma_g\tau_n)\frac{\psi}{\psi-1}
\geq (1+\omega_{n+1}\gamma_g\tau_n)\frac{\psi+\gamma_g\tau_{n+1}}{\psi-1}
=\frac{\beta_{n+1}}{\beta_n}\frac{\psi+\gamma_g\tau_{n+1}}{\psi-1}.
\ee
Define $A_{n}:=\frac{\psi+\gamma_g\tau_n}{2(\psi-1)}\|z_{n+1}-\bar{x}\|^2+ \frac{1}{2\beta_{n}}\|y_{n-1}-\bar{y}\|^2$.
Combining (\ref{ineq13}) and \eqref{key-ineq}, we deduce
\be\label{key-ineq2}
\beta_{n+1}A_{n+1}+\beta_{n}\tau_{n} G(x_n,y_n)
\leq \beta_{n}A_{n} - \frac{\beta_n \gamma_g \tau_n}{2\psi} \|x_{n+1}-z_{n+1}\|^2.
\ee
By summing \eqref{key-ineq2} over $n = 1, \ldots, N$, we obtain
\be\label{beta-A}
\beta_{N+1}A_{N+1}+\sum_{n=1}^N\beta_{n}\tau_{n} G(x_n,y_n)
+ \sum_{n=1}^N \frac{\beta_n \gamma_g \tau_n}{2\psi} \|x_{n+1}-z_{n+1}\|^2
\leq \beta_{1}A_{1}.
\ee
Then, the convexity of $G(\cdot,\cdot)$, $G(x_n,y_n) \ge 0$,
 the definition of $A_n$, \eqref{x_n}  and (\ref{beta-A}) imply that
\be
G(X_N,Y_N)&\leq&  {1\over  S_N}\sum_{n=1}^N\beta_{n}\tau_{n} G(x_n,y_n) \leq { \beta_{1}A_{1} \over S_N},\label{G-XY}\\
\|z_{N+2}-\bar{x}\|^2&\leq& \frac{2(\psi-1)}{\psi +  \gamma_g \tau_{N+1}}
{\beta_{1}A_{1} \over \beta_{N+1}} \leq {2\beta_{1}A_{1} \over \beta_{N+1}}.
\label{z-N}
\ee
From Lemma \ref{lem-beta-cnn}, there exists $c>0$ such that $\beta_n \geq c n^2$ for all $n\geq 1$.
Taking $x^* = \bar{x}$, then
 (\ref{z-N}) implies $\|z_{N+1}-x^*\| \leq C_1/N$ with $C_1 := \sqrt{2\beta_1A_1/c} > 0$.
Thus, \eqref{z-gstrong} implies  $\|x_{N+1}-x^*\| \leq C_2/N$ for some $C_2 >0$.
Hence, property (a) holds and the convergence of $\{ z_n: n\geq 1 \}$ and $\{x_n: n\geq 1\}$ to $x^*$ follows immediately.

Let $\mathcal{S} = \{n \in \mathcal{Z}^+ : \sqrt{\beta_n} \tau_n \ge 1/L \}$.
Properties (ii) and (iii) in Lemma~\ref{lem-beta-cnn}  imply that $|\mathcal{S}| =\infty$.
We next show by contradiction that there exists a subsequence $\{n_k: k \ge 1\} \subseteq \mathcal{S}$
 such that
\begin{equation}\label{sub-x-z}
\lim\limits_{k \to \infty } \|x_{n_k+1}-z_{n_k+1}\|/\tau_{n_k} = 0.
\end{equation}
Let $N\geq 1$ be arbitrarily fixed and $\theta =  (\psi - \varphi)\gamma_g/\psi >0$. By Lemma \ref{lem-beta-cnn} (ii) and \eqref{beta-gstrong}, we have
\ben
c (N+1)^2 \le
\beta_{N+1} = \beta_N \left(
1+ \frac{(\psi-\varphi)\gamma_g\tau_N}{\psi+\varphi\gamma_g\tau_N} \right)
\le \beta_N \big( 1+ \theta \tau_N \big)
\le
 \beta_{1} \prod_{n=1}^N \left( 1+ \theta \tau_n \right).
\een
Then, by Lemma \ref{lem-beta-cnn} (iii) we deduce
\be\label{sum_SNtn}
2\ln(N+1) - \ln(\beta_1/c) \leq \sum_{n=1}^N\ln(1 + \theta \tau_n)
\leq \theta \sum_{n=1}^N\tau_n
\leq \theta \tilde{c} \sum_{n\in {\cal S}_N} \tau_n.
\ee
On the other hand, it follows from \eqref{beta-A} that
\ben
{2\psi \beta_1 A_1  \over \gamma_g}
\geq \sum_{n\in {\cal S}} \beta_n\tau_n^3 \left({\|x_{n+1}-z_{n+1}\|  \over \tau_n }\right)^2
\geq {1\over L^2}\sum_{n\in {\cal S}}  \tau_n   \left({\|x_{n+1}-z_{n+1}\|  \over \tau_n }\right)^2,
\een
from which the existence of a subsequence $\{n_k: k \ge 1\} \subseteq \mathcal{S}$ such
that \eqref{sub-x-z} holds is guaranteed since \eqref{sum_SNtn} indicates that
$\sum_{n\in {\cal S}} \tau_n = \lim_{N\rightarrow\infty } \sum_{n\in {\cal S}_N} \tau_n = \infty$.

Now, it follows from \eqref{beta-A} that $\{ \beta_n A_n : n \ge 1\}$ is bounded,
which implies that $\{ y_n: n \ge 1\}$ is bounded.
Let $\{n_k : k \ge 1 \} \subseteq \mathcal{S}$ be the subsequence
satisfying \eqref{sub-x-z}, which then has a subsequence, still denoted as $\{n_k: k \ge 1\}  \subseteq \mathcal{S}$,
such that $\lim_{k \to \infty} y_{n_k} = y^*$.
Similar to \eqref{sub-optmal}, for any $(x,y)\in \R^q\times \R^p$, we have
\begin{equation}\label{sub-opt-2}
\left\{\ba{l}
\langle x_{n_k+1}-z_{n_k+1} + \tau_{n_k} K^\top y_{n_k}, ~ x-x_{n_k+1}\rangle\geq \tau_{n_k} \big(g(x_{n_k+1})-g(x)\big), \smallskip \\
\big\langle {1\over\beta_{n_k}} (y_{n_k}-y_{n_k-1}) - \tau_{n_k} K x_{n_k}, ~y-y_{n_k}\big\rangle\geq \tau_{n_k} \big(f^*(y_{n_k})-f^*(y)\big).
\ea
\right.
\end{equation}
Since $n_k \in \mathcal{S}$, we have $\sqrt{\beta_{n_k}} \tau_{n_k} \ge 1/L$, which
together with $\beta_n \ge c n^2$ for some $c >0$ implies
$\lim_{k \to \infty} \beta_{n_k} \tau_{n_k} = \infty$.
Hence,
 $\lim_{k \to \infty} \|y_{n_k}-y_{n_k-1}\|/(\beta_{n_k} \tau_{n_k}) =0$ since   $\{ y_n: n \ge 1\}$ is bounded.
Then, dividing $\tau_{n_k}$ from both sides of the two inequalities in \eqref{sub-opt-2},
taking $k \to \infty$, it follows from $\lim_{n \to \infty} x_{n} = x^*$,
$\lim_{k \to \infty} y_{n_k} = y^*$, lower semicontinuous of $g$ and $f^*$, and \eqref{sub-x-z} that
\eqref{thm12-1} holds, which implies $(x^*, y^*)$ is a saddle point of \eqref{saddle_point}.
Hence, property (b) holds.

Finally, it follows from \eqref{beta-gstrong} and $\omega_{n+1} < 1$ that
\ben
\beta_n \tau_n = \frac{\beta_{n+1} - \beta_n}{ \omega_{n+1} \gamma_g}
\ge \frac{\beta_{n+1} - \beta_n}{ \gamma_g}.
\een
Hence, we have from $\beta_n \ge c n^2$ that
$ S_N =\sum_{n=1}^N \beta_{n}\tau_{n} \ge  \frac{\beta_{N+1} - \beta_1}{ \gamma_g}
\ge c_1 N^2$ for some $c_1 > 0$.
Consequently, $G(X_N,Y_N) \le C_3/  N^{2}$ with $C_3 = \beta_1 A_1/c_1$ follows from (\ref{G-XY}).
Hence, property (c) holds. We complete the proof.
\end{proof}

\subsection{Linear convergence when $g$ and $f^*$ are strongly convex}
\label{sec-gf-strongly}
In this section, we assume that both $g$ and $f^*$ are strongly convex, with modulus $\gamma_g>0$ and $\gamma_f>0$, respectively, and
establish nonergodic linear convergence rate of GRPDA with linesearch. Note that when only one of the component function is strongly convex, say $g$, the strong convexity parameter $\gamma_g$ is used in the algorithm, see \eqref{beta-gstrong}. However,
when both $g$ and $f^*$ are strongly convex, the strong convexity modulus $\gamma_g$ and $\gamma_f$ do not need to be known in advance  since they are only used in the analysis and play no role in the algorithm itself.
We now summarize the modified algorithm as follows.
\vskip5mm
\hrule\vskip2mm
\begin{algo}
[GRPDA with linesearch when $g$ and $f^*$ are strongly convex]\label{algo-acc3-L}
{~}\vskip 1pt {\rm
\begin{description}
\item[{\em Step 0.}] Let $\psi_0 = 1.3247...$ be the unique real root of $\psi^3-\psi-1 = 0$. Choose $\psi\in(\psi_0,\phi)$, $\beta>0$, $\tau_0 > 0$ and $\mu\in(0,1)$. Choose $x_0=z_{0}\in \bR^q$ and $y_0\in \bR^p$. Set $\varphi=\frac{1+\psi}{\psi^2}$ and $n=1$.
\item[{\em Step 1.}]Compute
\be
z_{n}&=& \frac{\psi-1}{\psi} x_{n-1} + \frac{1}{\psi}z_{n-1}, \label{z-gfstrong} \\
x_{n}&=&\prox_{\tau_{n-1} g}(z_{n}-\tau_{n-1} K^\top y_{n-1}).\label{x-gfstrong}
\ee
\item[{\em Step 2.}]  Let $\tau=\varphi\tau_{n-1}$ and compute
\be\label{y-gfstrong}
   y_{n}=\prox_{\beta\tau_{n} f^*}(y_{n-1}+\beta\tau_{n} Kx_{n}),
\ee
where $\tau_n = \tau \mu^i$ and $i$ is the smallest nonnegative integer such that
\be\label{y-ls-gfstrong}
\sqrt{\beta\tau_{n}}\|K^\top y_{n}-K^\top y_{n-1}\|\leq \sqrt{\psi / \tau_{n-1}}\|y_{n}-y_{n-1}\|.
\ee
\item[{\em Step 3.}] Set $n\leftarrow n + 1$ and return to Step 1.
  \end{description}
}
\end{algo}
\vskip1mm\hrule\vskip5mm

With respect to Algorithm~\ref{algo-acc3-L}, we have the following key lemma.
\begin{lem}\label{lem-beta-fg-acc3}
  Let $\{\tau_n: n\geq 0\}$ be the sequence generated by Algorithm~\ref{algo-acc3-L}.
   Then, we have the following properties.
(i) The linesearch step of Algorithm~\ref{algo-acc-L}, i.e., Step 2, always terminates.
(ii)  There exists a constant $\theta>1$  such that for any $N \ge 1$, we have
$\Gamma_{N+1}: = \prod_{n=2}^{N+1} \theta_n \ge \theta^N$ with
\begin{equation}\label{def-theta-n}
\theta_n = \min\left\{1 + \omega_n\gamma_g\tau_{n-1},\, 1 + \beta\gamma_f\tau_{n-1}\right\},
\quad n \ge 2,
\end{equation}
where $\omega_n = \frac{\psi - \varphi}{\psi+\gamma_g \varphi\tau_{n-1}}$ is defined in \eqref{omega-gstrong}
\end{lem}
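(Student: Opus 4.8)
The plan is to mirror the proof of Lemma~\ref{lem_bound}, since the three linesearch procedures share the same structure. Part (i) is immediate: the linesearch test \eqref{y-ls-gfstrong} is identical in form to \eqref{y-ls} (with $\sigma=1$ and with the constant $\beta$ in place of $\beta\tau_n$ only up to notation), so the argument from Appendix~\ref{proof:lem-bound} that Step~2 terminates after finitely many backtracking steps carries over verbatim: as $i\to\infty$ the trial stepsize $\tau\mu^i\to0$, and the proximal step for $f^*$ makes $\|y_n-y_{n-1}\|$ comparable to $\beta\tau_n\|Kx_n+\text{(bounded)}\|$, so the left side of \eqref{y-ls-gfstrong} is $O((\beta\tau_n)^{3/2})$ while the right side is $\Theta(\sqrt{\tau_n})\cdot\Theta(\beta\tau_n)=\Theta(\beta\tau_n^{3/2}/\sqrt{\tau_{n-1}})$, so the inequality holds once $\beta\tau_n$ is small enough; more cleanly, when \eqref{y-ls-gfstrong} fails we extract a lower bound $\tau_n\mu^{-1}\ge \underline\tau:=\sqrt{\psi}/(L\sqrt{\beta\varphi})$ on the previous trial value, which bounds the number of backtracks.

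For part (ii), the key structural fact is the same dichotomy used in Lemma~\ref{lem_bound}(ii)-(iii): by definition $\tau_n\le\varphi\tau_{n-1}$, i.e. $\delta_n:=\tau_n/\tau_{n-1}\le\varphi$, so $\theta_n\le\min\{1+\omega_n\gamma_g\tau_{n-1},\,1+\beta\gamma_f\tau_{n-1}\}$ with $\omega_n\in(0,1)$ bounded away from... no — rather, the point is that $\theta_n>1$ always (since $\omega_n>0$ by $\psi>\psi_0\Rightarrow\psi>\varphi$, and $\gamma_g,\gamma_f,\beta,\tau_{n-1}>0$), but $\theta_n$ could be arbitrarily close to $1$ when $\tau_{n-1}$ is small. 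So a uniform per-step bound is impossible, exactly as in Lemma~\ref{lem_bound}; instead I would argue that $\tau_{n-1}$ cannot stay small on too large a fraction of indices. Concretely: whenever a backtrack occurs at step $n$ (i.e. $i\ge1$), the rejected value gives $\tau_n/\mu\le \underline\tau$ fails, hence $\tau_n\ge\mu\underline\tau$... wait, that is backwards; rejection at the first trial $\tau=\varphi\tau_{n-1}$ means $\sqrt{\beta\varphi\tau_{n-1}}L\|\cdots\|>\sqrt{\psi/\tau_{n-1}}\|\cdots\|$, which forces $\beta\varphi\tau_{n-1}^2 L^2 > \psi\cdot(\text{ratio})^{-2}$... the clean statement, taken from Appendix~\ref{proof:lem-bound}, is that either $\tau_n\ge\underline\tau$, or $i\ge1$ and $\tau_{n-1}$ is bounded below; combining these across a block of $N$ consecutive indices shows that on a positive-density subset $\mathcal K_N$ of $\{1,\dots,N\}$ one has $\tau_n\ge\underline\tau$, so $\sum_{n\le N}\tau_n\ge \underline c N$. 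Then, since $\ln\theta_n\ge \ln(1+\beta\gamma_f\tau_{n-1})\ge c'\min\{1,\tau_{n-1}\}$ and also $\ln\theta_n\ge\ln(1+\omega_n\gamma_g\tau_{n-1})\ge c''\tau_{n-1}/(1+\tau_{n-1})$, we get $\ln\theta_n\ge c_0\tau_{n-1}$ on indices where $\tau_{n-1}\le 1$ and $\ln\theta_n\ge c_0$ where $\tau_{n-1}>1$; either way $\ln\theta_n\ge c_0\min\{\tau_{n-1},1\}$, and on the density set $\mathcal K_{N}$ (shifted by one) this is $\ge c_0\min\{\underline\tau,1\}=:\ln\theta>0$. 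Hence $\ln\Gamma_{N+1}=\sum_{n=2}^{N+1}\ln\theta_n\ge |\mathcal K_N'|\cdot\ln\theta\ge \hat c N\ln\theta$, giving $\Gamma_{N+1}\ge\theta^{\hat c N}$; absorbing $\hat c$ into the base (replace $\theta$ by $\theta^{\hat c}$) yields $\Gamma_{N+1}\ge\theta^N$ as claimed.

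The main obstacle is handling the two-sided minimum in \eqref{def-theta-n} simultaneously with the adaptivity of $\tau_n$: each factor $\theta_n$ is genuinely close to $1$ when $\tau_{n-1}$ is tiny, and unlike the Malitsky--Pock analysis there is no uniform lower bound on the whole sequence $\{\tau_n\}$, only on a density-$\hat c$ subsequence (Lemma~\ref{lem_bound}(iii) and the remark following it). The resolution — and the only genuinely new bookkeeping relative to Appendix~\ref{proof:lem-bound} — is to lower-bound $\ln\theta_n$ by a single expression $c_0\min\{\tau_{n-1},1\}$ valid for both branches of the minimum, and then to invoke the ``positive fraction of large $\tau_n$'' conclusion of Lemma~\ref{lem_bound}(iii) applied to Algorithm~\ref{algo-acc3-L} (whose Step~1–2 are literally those of Algorithm~\ref{algo2} with $\sigma=1$ and $\beta\tau_n\leftrightarrow\beta$, so the lemma's proof applies without change). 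Everything else — the termination count in (i), the elementary inequality $\ln(1+t)\ge t/(1+t)$, and the product-to-sum passage — is routine. Accordingly I would present (i) in one line by citing the backtracking count of Appendix~\ref{proof:lem-bound}, and devote the body of the proof to establishing the uniform branch-independent bound on $\ln\theta_n$ and summing it over the density subset.
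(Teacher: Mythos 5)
Your proposal is correct and follows essentially the same route as the paper: part (i) by transporting the termination argument of Lemma~\ref{lem_bound}(i) with $\sigma=1$, and part (ii) by invoking the positive-density conclusion of Lemma~\ref{lem_bound}(iii) for the set $\{n\le N:\tau_n\ge\hat{\tau}\}$ and bounding each factor $\theta_{n+1}$ on that set away from $1$ (the paper does this directly via monotonicity of both branches of the minimum in $\tau$, evaluated at $\hat{\tau}$, rather than through your logarithmic bound $\ln\theta_n\ge c_0\min\{\tau_{n-1},1\}$, but the two are equivalent). The only blemishes are cosmetic: the false starts in your part (i) and the unnecessary $\min\{\tau_{n-1},1\}$ truncation, neither of which affects correctness.
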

\begin{proof}
The conclusion (i) follows from the same proof of conclusion (i) in Lemma~\ref{lem_bound} with setting
 $\sigma =1$. We next prove conclusion (ii).  Given any integer $N \ge 1$,
it follows from the same proof  of conclusion (iii)  in Lemma~\ref{lem_bound} with setting $\sigma =1$
that $|\mathcal{K}_N| \ge \hat{c} N$ for some
constant $\hat{c} > 0$, where
\be\label{def_K_N}
\mathcal{K}_N = \{1 \le n \le N: \tau_n \ge \hat{\tau}\} \text{~~with~~}
\hat{\tau} = \frac{1}{L} \frac{\sqrt{\psi}}{\sqrt{\beta \varphi}} > 0
\ee
and $|\mathcal{K}_N|$ is the cardinality of the set $\mathcal{K}_N$.
Then, by the definition of $\Gamma_N$ and $\theta_n$, we have
\begin{eqnarray*}
\Gamma_{N+1} &= & \prod_{n=2}^{N+1} \theta_n = \prod_{n=2}^{N+1}
 \min\left\{1 + \frac{(\psi-\varphi)\gamma_g\tau_{n-1}}{\psi+\gamma_g \varphi\tau_{n-1}} ,\,
 1 + \beta\gamma_f\tau_{n-1}\right\}\\
&\ge & \prod_{n \in \mathcal{K}_N}
 \min\left\{1 + \frac{(\psi-\varphi)\gamma_g \tau_n}
 {\psi+\gamma_g \varphi \tau_n} ,\,  1 + \beta\gamma_f\tau_n\right\}\\
 &\geq& \tilde{\theta}^{|\mathcal{K}_N|} \ge \tilde{\theta}^{\hat{c} N} = \theta^N,
\end{eqnarray*}
where $\theta = \tilde{\theta}^{\hat{c}} >1$ and
$\tilde{\theta} = \min\big\{1 + \frac{(\psi-\varphi)\gamma_g \hat{\tau}}
{\psi+\gamma_g \varphi \hat{\tau}} ,\,  1 + \beta\gamma_f \hat{\tau}\big\} >1$.
\end{proof}

Based on   Lemma~\ref{lem-beta-fg-acc3}, we have the following iterative and nonergodic linear convergence results.
\begin{thm}[Linear convergence]
 Let $\{(z_n,x_n,y_n): n\geq 1\}$ be the sequence generated by Algorithm~\ref{algo-acc3-L}.
Then, the following holds:\\
(a) $\{(z_n,y_n): n\geq 1\}$ converge to the unique primal and dual optimal solutions $({\bar x}, {\bar y})$; \\
(b) there exit constants $C_1, C_2>0$ and $\theta >1$ such that for all $n\geq 1$ there hold
$\|z_{n+2}-\bar{x}\| \leq C_1/\theta^n$ and $\|y_{n}-\bar{y}\| \leq C_2/\theta^n$; \\
(c) there exist constants $C_3 >0$ and $\hat{\theta} > 1$ such that
$\min\limits_{1 \le n \le N} G(x_n, y_n) \le C_3/\hat{\theta}^N$.
\end{thm}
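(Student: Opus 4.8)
The plan is to rerun the argument behind Theorem~\ref{thm:gstrong}, but keeping the strong‑convexity term for \emph{both} $g$ and $f^*$, with the acceleration now carried by the geometric product $\Gamma_{N}$ of Lemma~\ref{lem-beta-fg-acc3} rather than by a growing $\beta_n$. First I would record the three subgradient inequalities coming from \eqref{x-gfstrong} and \eqref{y-gfstrong} through Fact~\ref{fact_proj}: the two from the $x$‑update are exactly \eqref{gstrong-ieq2} and \eqref{thm4.1-ieq2} with the $\gamma_g$‑terms retained, while the one from the $y$‑update, applied to the $\gamma_f$‑strongly convex $f^*$ with modulus parameter $\beta\tau_n$, is \eqref{thm4.1-ieq3} strengthened by an extra $+\tfrac{\gamma_f\tau_n}{2}\|y_n-\bar{y}\|^2$ on its right‑hand side. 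Adding these three and using $x_n-z_n=\psi(x_n-z_{n+1})$ from \eqref{z-gfstrong} gives the analogue of \eqref{stronger-c} carrying the additional negative terms $-\tfrac{\gamma_g\tau_n}{2}\|x_{n+1}-\bar{x}\|^2$, $-\tfrac{\gamma_g\tau_n}{2}\|x_{n+1}-x_n\|^2$ and $-\tfrac{\gamma_f\tau_n}{2}\|y_n-\bar{y}\|^2$.

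Next I would expand the inner products with \eqref{id}, discard the harmless $-\tfrac{\gamma_g\tau_n}{2}\|x_{n+1}-x_n\|^2$, and bound the cross term $2\tau_n\|K^\top(y_n-y_{n-1})\|\,\|x_{n+1}-x_n\|$ by $\psi\delta_n\|x_{n+1}-x_n\|^2+\tfrac1\beta\|y_n-y_{n-1}\|^2$ via the linesearch test \eqref{y-ls-gfstrong} (which here plays the role of $\sigma=1$) together with Cauchy--Schwarz. Substituting \eqref{x-to-z} and using $\psi\delta_n\le\psi\varphi=1+\tfrac1\psi$ to make the coefficient of $\|x_{n+1}-z_{n+1}\|^2$ at most $-\gamma_g\tau_n/\psi$, this yields, after dropping nonpositive terms,
\begin{align*}
&(1+\gamma_g\tau_n)\tfrac{\psi}{\psi-1}\|z_{n+2}-\bar{x}\|^2+\tfrac{1+\beta\gamma_f\tau_n}{\beta}\|y_n-\bar{y}\|^2+2\tau_nG(x_n,y_n)\\
&\qquad\le\tfrac{\psi+\gamma_g\tau_n}{\psi-1}\|z_{n+1}-\bar{x}\|^2+\tfrac1\beta\|y_{n-1}-\bar{y}\|^2.
\end{align*}
Now the algebra of \eqref{relax-coe}--\eqref{key-ineq} gives $(1+\gamma_g\tau_n)\tfrac{\psi}{\psi-1}\ge(1+\omega_{n+1}\gamma_g\tau_n)\tfrac{\psi+\gamma_g\tau_{n+1}}{\psi-1}$ with $\omega_{n+1}$ as in \eqref{omega-gstrong}, while trivially $\tfrac{1+\beta\gamma_f\tau_n}{\beta}=(1+\beta\gamma_f\tau_n)\tfrac1\beta$; since $\theta_{n+1}$ in \eqref{def-theta-n} is the minimum of $1+\omega_{n+1}\gamma_g\tau_n$ and $1+\beta\gamma_f\tau_n$, setting $E_n:=\tfrac{\psi+\gamma_g\tau_n}{2(\psi-1)}\|z_{n+1}-\bar{x}\|^2+\tfrac1{2\beta}\|y_{n-1}-\bar{y}\|^2$ I obtain the clean recursion $\theta_{n+1}E_{n+1}+\tau_nG(x_n,y_n)\le E_n$ for all $n\ge1$.

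From this recursion, (a)--(c) follow quickly. Iterating gives $E_{n}\le E_1/\prod_{k=2}^{n}\theta_k\le E_1\theta^{-(n-1)}$ by Lemma~\ref{lem-beta-fg-acc3}(ii); since $E_{n}$ dominates $\tfrac{\psi}{2(\psi-1)}\|z_{n+1}-\bar{x}\|^2$ and $\tfrac1{2\beta}\|y_{n-1}-\bar{y}\|^2$, taking square roots gives $\|z_{n+2}-\bar{x}\|,\|y_{n}-\bar{y}\|=O(\theta^{-n/2})$, which is (b) after renaming the rate base; (a) is then immediate, with $x_n\to\bar{x}$ obtained by rearranging \eqref{z-gfstrong}, and the uniqueness of $(\bar{x},\bar{y})$ coming from the strong convexity of $g$ and $f^*$. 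For (c), the recursion also gives $\tau_nG(x_n,y_n)\le E_n\le E_1\theta^{-(n-1)}$; invoking the estimate $|\mathcal{K}_N|\ge\hat{c}N$ with $\mathcal{K}_N=\{1\le n\le N:\tau_n\ge\hat\tau\}$ and $\hat\tau$ as in \eqref{def_K_N} (which Lemma~\ref{lem-beta-fg-acc3} inherits from Lemma~\ref{lem_bound}(iii)), and noting that $n^*:=\max\mathcal{K}_N$ satisfies $n^*\ge|\mathcal{K}_N|\ge\hat{c}N$ and $\tau_{n^*}\ge\hat\tau$, we get $\min_{1\le n\le N}G(x_n,y_n)\le G(x_{n^*},y_{n^*})\le E_1\theta^{-(n^*-1)}/\hat\tau\le C_3\hat\theta^{-N}$ with $\hat\theta=\theta^{\hat{c}}>1$.

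The main obstacle is the first, computation‑heavy step: tracking how the two strong‑convexity terms propagate through \eqref{id} and the substitution \eqref{x-to-z} so that the two resulting multiplicative factors $(1+\gamma_g\tau_n)\tfrac{\psi}{\psi-1}$ and $\tfrac{1+\beta\gamma_f\tau_n}{\beta}$ can be dominated simultaneously by one common factor relative to the coefficients in $E_{n+1}$ — which is exactly why $\theta_n$ in \eqref{def-theta-n} is a minimum and why $\gamma_f$ enters multiplied by $\beta$ whereas $\gamma_g$ does not. Everything after the recursion is routine, modulo the elementary observation $\max\mathcal{K}_N\ge|\mathcal{K}_N|$, which is what localizes a late iterate whose stepsize stays bounded away from zero.
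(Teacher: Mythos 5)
Your proposal is correct and follows essentially the same route as the paper: the same three proximal subgradient inequalities with both strong-convexity terms retained, the same Lyapunov function (your $E_n$ is the paper's $A_n/2$), the same one-step contraction $\theta_{n+1}E_{n+1}+\tau_n G(x_n,y_n)\le E_n$ driven by the minimum in \eqref{def-theta-n} together with \eqref{relax-coe}, the geometric bound from Lemma~\ref{lem-beta-fg-acc3}(ii), and the same localization of a late index $n^*=\max\mathcal{K}_N\ge\hat{c}N$ with $\tau_{n^*}\ge\hat\tau$ for part (c). The only differences are cosmetic (the factor of $2$ in the Lyapunov function and your explicit remark that the rate base must be renamed after taking square roots, which the paper glosses over).
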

\begin{proof}
Following the line of proof of Theorem \ref{thm:gstrong}, similar to \eqref{stronger-c},
one can easily show that
\ben
\tau_n G(x_n,y_n)
&\leq&\langle x_{n+1}-z_{n+1}, \bar{x}-x_{n+1}\rangle+
\frac{1}{\beta} \big\langle y_{n}-y_{n-1}, \bar{y}-y_{n}\big\rangle
+\psi\delta_n \big\langle x_{n}-z_{n+1}, x_{n+1}- x_n\big\rangle \nonumber\\
&&+ \tau_n \langle K^\top y_{n}- K^\top y_{n-1}, x_n-x_{n+1}\rangle- \frac{\gamma_g\tau_n}{2} \|x_{n+1}-\bar{x}\|^2 - \frac{\gamma_f\tau_n}{2}  \|y_{n}-\bar{y}\|^2,
\een
where $\delta_n = \tau_n/\tau_{n-1}$.
Then, by the same arguments as in \eqref{jy-0316-1}-\eqref{ineq13}, we have
\be
&&(1+\gamma_g\tau_n)\frac{\psi}{\psi-1}\|z_{n+2}-\bar{x}\|^2
+\big(1/\beta +\gamma_f\tau_n\big) \|y_{n}-\bar{y}\|^2+ 2 \tau_n G(x_n,y_n)\nonumber\\
&\leq&\frac{\psi+\gamma_g\tau_n}{\psi-1}\|z_{n+1}-\bar{x}\|^2+ \frac{1}{\beta}\|y_{n-1}-\bar{y}\|^2.
\label{jy-0316-3}
\ee
From  \eqref{relax-coe} and the definition of $\theta_{n+1}$, it holds that
\[\label{jy-0316-2}
\frac{\psi(1+\gamma_g\tau_n)}{\psi-1} = \frac{\psi(1+\gamma_g\tau_n)}{\psi+\gamma_g\tau_{n+1}} \frac{\psi+\gamma_g\tau_{n+1}}{\psi-1}
\geq   \big(1 + \omega_{n+1}\gamma_g\tau_n\big)  \frac{\psi+\gamma_g\tau_{n+1}}{\psi-1}
\geq   \theta_{n+1}   \frac{\psi+\gamma_g\tau_{n+1}}{\psi-1},
\]
where $\omega_n$ and $\theta_n$ are defined in \eqref{omega-gstrong} and \eqref{def-theta-n}, respectively.
Let
\[\label{jy-def:An}
A_n := \frac{\psi+\gamma_g\tau_n}{\psi-1}\|z_{n+1}-\bar{x}\|^2+ \frac{1}{\beta}\|y_{n-1}-\bar{y}\|^2.
\]
Then, we have from \eqref{jy-0316-3}, \eqref{jy-0316-2}
and $G(x_n,y_n) \ge 0$ that
\be\label{ineq-gf}
\theta_{n+1} A_{n+1} \le \theta_{n+1} A_{n+1} + 2\tau_n G(x_n,y_n) \leq A_n,
\ee
which implies $ \big(\prod_{n=2}^{N+1} \theta_n \big) A_{N+1} \le A_1$.
By (ii) of Lemma~\ref{lem-beta-fg-acc3}, we have
$ \Gamma_{N+1} =  \prod_{n=2}^{N+1} \theta_n \ge \theta^N$ for some $ \theta > 1$.
Hence, we have $A_{N+1} \le A_1 / \theta^{N}$, which implies conclusions (a) and (b).
Finally, let $\mathcal{K}_N = \{1 \le n \le N: \tau_n \ge \hat{\tau} >0\}$ be defined in
\eqref{def_K_N} and $\hat{N} = \max \{n : n \in \mathcal{K}_N \}$. Then, we have
$\hat{N} \ge |\mathcal{K}_N| \ge \hat{c} N$ for some $\hat{c}>0$. Thus, it follows from (b) and \eqref{ineq-gf} that
\ben
\min_{1 \le n \le N} G(x_n, y_n) \le G(x_{\hat{N}}, y_{\hat{N}})
\le \frac{A_{\hat{N}}}{2 \hat{\tau}}
\le  \frac{A_1}{2 \hat{\tau} \theta^{\hat{N}-1}}
\le \frac{A_1}{2 \hat{\tau} \theta^{\hat{c} N-1}} := \frac{C_3}{\hat{\theta}^N},
\een
where $C_3 = \theta A_1/(2 \hat{\tau})$ and $\hat{\theta} = \theta^{\hat{c}} > 1$.
So, (c) holds. We complete the proof.
\end{proof}

%

\section{Numerical results}
\label{sec-experiments}
In this section, numerical results are presented to demonstrate the performance of the proposed  Algorithm \ref{algo2} (GRPDA-L) and Algorithm \ref{algo-acc-L} (AGRPDA-L). We set  $\sigma = 0.99$ for GRPDA-L and $\psi = 1.5$ and $\mu = 0.7$ for both algorithms. First, choose $y_{-1}$ arbitrarily in a small neighborhood of the starting point $y_0$
such that $K^\top y_{-1}\neq K^\top y_0$ and compute
\be\label{t0tb}
m=\frac{\|y_{-1}-y_0\|}{\|K^\top y_{-1}-K^\top y_0\|} \geq {1\over L}.
\ee
Then, set $\tau_0=\frac{\sqrt{\psi}}{\sqrt{\beta}}m\geq\frac{\sqrt{\psi}}{\sqrt{\beta}L}$ for GRPDA-L and $\tau_0=\frac{\sqrt{\psi}}{\sqrt{\beta_0}}m\geq\frac{\sqrt{\psi}}{\sqrt{\beta_0}L}$ with some $\beta_0>0$ for AGRPDA-L.
We compare the proposed algorithms with their corresponding counterparts without linesearch, i.e.,
GRPDA \cite[Algoirthm 3.1]{ChY2020Golden} with $\tau=\frac{\sqrt{\psi}}{\sqrt{\beta}L}$ and $\psi = 1.618$, as well as  the state-of-the-art primal-dual algorithm with linesearch PDA-L, i.e., \cite[Algorithm 1]{Malitsky2018A}, with $\mu = 0.7$,  $\delta= 0.99$ and $\tau_0=\|y_{-1}-y_0\|/(\sqrt{\beta}\|K^\top y_{-1}-K^\top y_0\|)$.
Other parameters will be specified in the following.

All experiments were performed within Python 3.8 on an Intel(R) Core(TM) i5-4590 CPU 3.30GHz PC with 8GB of RAM running on 64-bit Windows operating system. For reproducible purpose, the codes are provided at \url{https://github.com/cxk9369010/GRPDA-Linesearch}. We solve the minimax matrix game problem and the LASSO problem for comparison.

\begin{prob}[Minimax matrix game]\label{pro_mm}
The minimax matrix game problem is given by
\be\label{mm_pro}
\min_{x \in \D_q}\max_{y\in \D_p} \lr{Kx, y},
\ee
where $K\in \R^{p\times q}$, $\Delta_q = \{x\in \R^q: \sum_i x_i = 1, \, x\geq 0\}$ and $\Delta_p = \{y\in \R^p: \sum_i y_i = 1, \, y\geq 0\}$
denote the standard unit simplex in $\R^q$ and $\R^p$, respectively.
\end{prob}

Clearly, \eqref{mm_pro} is a special case of \eqref{saddle_point} with $g = \iota_{\D_q}$ and $f^* = \iota_{\D_p}$, where
$\iota_C$ denotes the indicator function of a set $C$.  Since neither $g$ nor $f^*$ is strongly convex, only the non-accelerated algorithms GRPDA, GRPDA-L and PDA-L are relevant here.
For $(x, y)\in \D_q\times \D_p$, the primal-dual gap function is given by $G(x, y) := \max_i(Kx)_i - \min_j(K^\top y)_j$.
Initial points for all the algorithms are set to be $x_0 =\frac{1}{q}(1,\ldots,1)^\top\in\R^q$ and $y_0 =\frac{1}{p}(1,\ldots,1)^\top\in\R^p$.
The projection onto the unit simplex is computed by the algorithm from \cite{Duchi2011Diagonal}. We set $\psi=1.618$ and $\tau= \sigma= 1/\|K\|$
for GRPDA and $\beta= 1$ for PDA-L and GRPDA-L.
As in \cite{Malitsky2018A}, we generated $K\in \bR^{p\times q}$ randomly in the following four different ways with random number generator $seed=50$:
\bi
\item[(i)] All entries of $K$ were generated independently from the uniform distribution in $[-1, 1]$, and $(p,q) = (100,100)$;
\item[(ii)] All entries of $K$ were generated independently from the normal distribution $\cN (0,1)$,  and $(p,q) = (100,100)$;
\item[(iii)] All entries of $K$ were generated independently from the normal distribution $\cN (0,10)$,  and $(p,q) = (500,100)$;
\item[(iv)] The matrix $K$ is sparse with 10\% nonzero elements generated independently from the uniform distribution in $[0,1]$,  and $(p,q) = (1000,2000)$.
\ei

For a given $\epsilon>0$, we terminate the algorithms when $G(x_n, y_n)<\epsilon$ or $n = n_{\max}$, where $n_{\max}$ is the maximum number of iterations allowed. In this section, we set $n_{\max}=3\times 10^5$ and examine how the values of the primal-dual gap function $G(x,y)$ decrease as CPU time proceeds.
Table \ref{table1} presents the total CPU time (Time, in seconds), the number of iterations (Iter) and the number of extra linesearch trial steps (\#LS) of PDA-L and GRPDA-L as compared with their counterparts without linesearch. We emphasize that for each trial of linesearch a projection onto the unit simplex is required for this example.
The decreasing behavior of the primal-dual gap function values (abbreviated as PD gap $G(x_n,y_n)$) versus CPU time is shown in Figure \ref{Fig 5} for the compared algorithms  with $\epsilon=10^{-10}$.

\begin{table}[htpb]
\caption{Results of GRPDA, PDA-L and GRPDA-L on problem \ref{pro_mm}.  In the table, ``---" represents that the algorithm reached the maximum number of iterations without satisfying the stopping condition.}\label{table1}
\vspace{.1 in}
\center
\small
\begin{tabular}{|c|c|cc|ccc|ccc|}
\hline
\multirow{2}{*}{$\epsilon$}&\multirow{2}{*}{Test}&\multicolumn{2}{|c|}{GRPDA}&\multicolumn{3}{|c|}{PDA-L}&\multicolumn{3}{|c|}{GRPDA-L}\\
                                                    &&Iter &Time &Iter& \#LS &Time	&Iter& \#LS &Time\\
\hline
\multirow{4}{1cm}{$10^{-7}$}&(i)	&25688  &3.7  &18612  &18376 	&4.5  &11010  &3250   & 2.1	 	\\
&(ii)	&103788  &14.9  &40676  &40209 	&10.2  &32656  &9646   &7.0 	 	\\
&(iii)	& ---  &79.7  &73197  &72903 	&32.4  &64628  &19088   & 23.4	 	\\
&(iv)	&---  &486.5  &45705  &44633 	&118.7  &30356  &8961   &65.5 	 	\\
\hline
\multirow{4}{1cm}{$10^{-10}$}&(i)	&151134 &20.2 &58282 &57561	&16.0 &45645 &13481  &9.8	 	\\
&(ii)	&245612 &33.5 &89644 &88622	&24.4 &75467 &22292  &15.9	 	\\
&(iii)	&--- &79.9 &155281 &154655	&69.3 &145527 &42985  &51.2	 	\\
&(iv)	&--- &486.5 &---&292982	&699.6&--- &88613 &	617.7 	\\
\hline
\end{tabular}
\end{table}

It can be seen from Table \ref{table1} that PDA-L requires approximately one extra linesearch trial step per outer iteration, while
GRPDA-L requires roughly one extra linesearch trial step per three outer iterations. As a result, GRPDA-L consumed less CPU time than PDA-L.
From the results in Figure \ref{Fig 5}, for all the four tests GRPDA-L performs the best, followed by PDA-L, both are faster than GRPDA. It can be seen from Figure \ref{Fig 5} that test (iv) is a difficult case, which makes all the compared algorithms fail to reduce the primal-dual gap function value to less than $10^{-10}$ within the prescribed maximum number of iterations.
%

\begin{figure}[htp]
\centering
\subfigure[Test (i).]{
\includegraphics[width=0.45\textwidth]{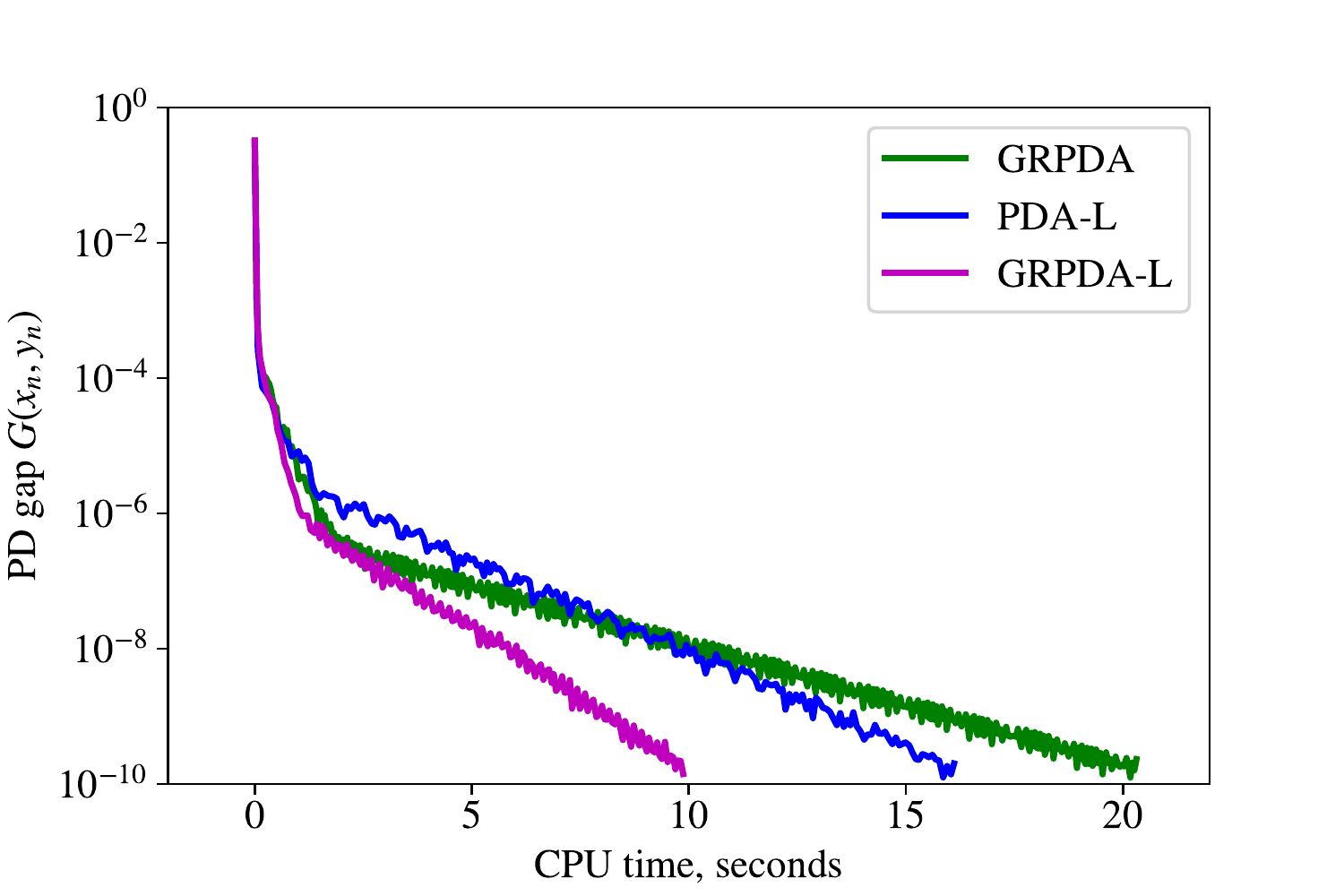}}
\subfigure[Test (ii).]{
\includegraphics[width=0.45\textwidth]{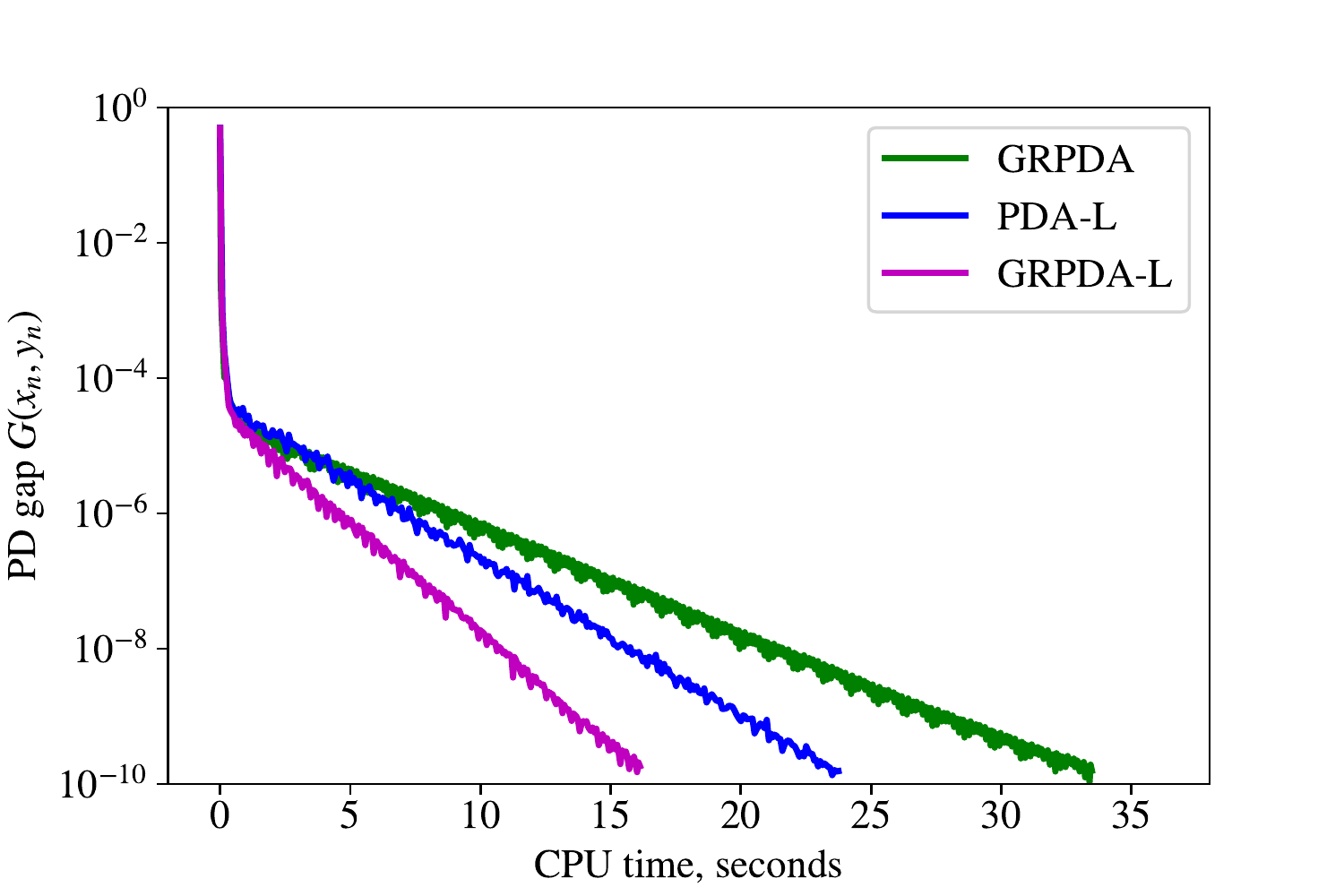}}
\subfigure[Test (iii).]{
\includegraphics[width=0.45\textwidth]{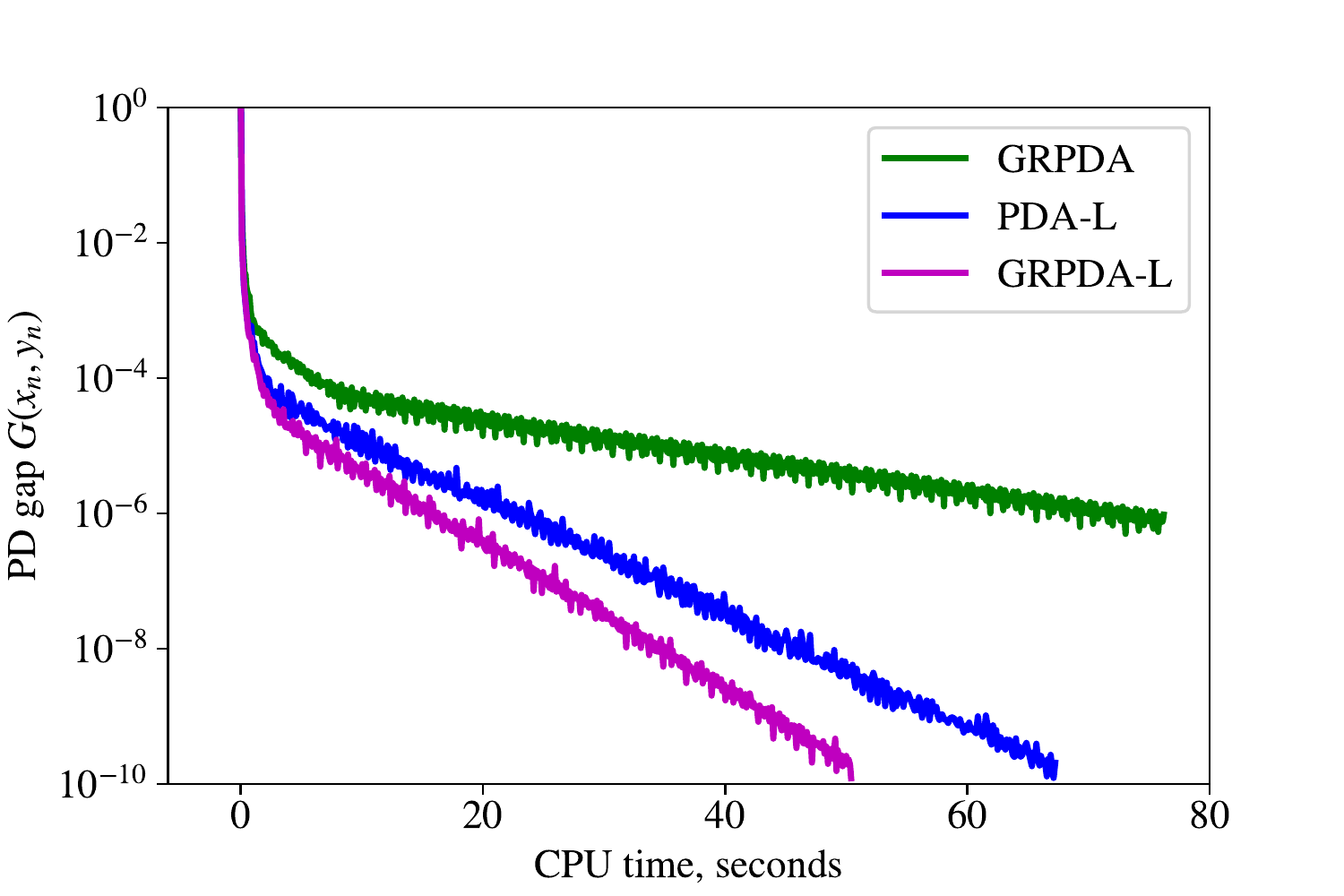}}
\subfigure[Test (iv).]{
\includegraphics[width=0.45\textwidth]{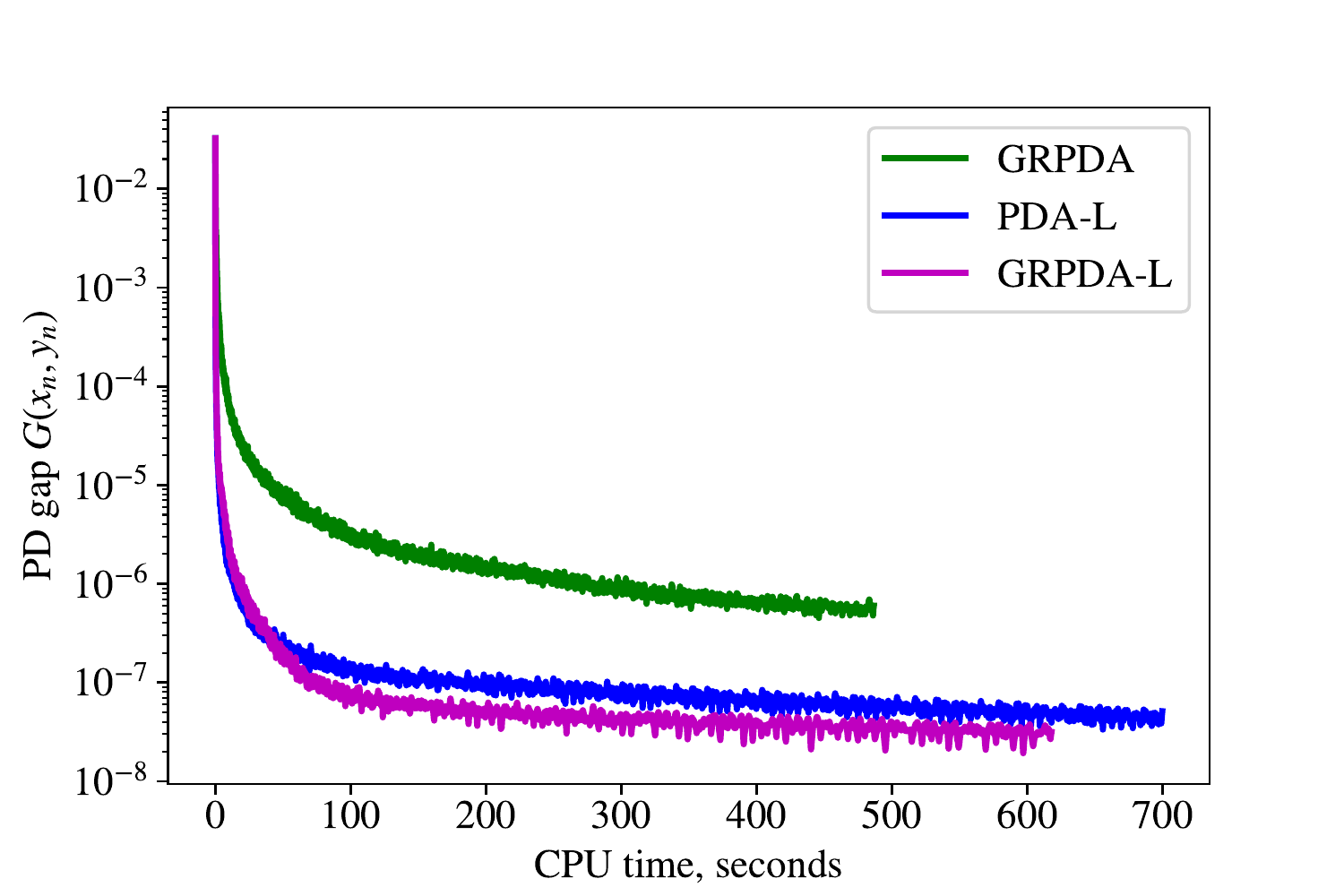}}
\caption{
Comparison results of GRPDA, GRPDA-L and PDA-L on Problem \ref{pro_mm}: PD gap versus CPU time.
}
\label{Fig 5} 
\end{figure}

\begin{prob}[LASSO]\label{pro_1}
Let $K\in \R^{p\times q}$ be a sensing matrix and $b\in \R^p$ be an observation vector.
One form of the LASSO problem is to recover a sparse signal via solving
\begin{equation}
  \label{pro_lasso1}
\min_x F(x):= \mu  \|x\|_1 + \frac 1 2 \|Kx-b\|^2,
\end{equation}
where $\mu > 0$ is a regularization parameter.
\end{prob}

It is easy to verify that the LASSO problem \eqref{pro_lasso1} can be represented as the saddle point problem \eqref{saddle_point} with
%
%
$g(x) = \mu \|x\|_1$ and $f^*(y) = \frac 1 2  \|y\|^2 + \langle b, y\rangle$.
Thus, the proximal operator $\prox_{\tau f^*}(\cdot)$ is linear. In fact, it holds that
\ben
y_{n} = \prox_{\beta_n\tau_n f^*}(y_{n-1}+\beta_n\tau_n K x_{n})=\frac{y_{n-1}+\beta_n\tau_n(K x_{n}-b)}{1+\beta_n\tau_n}.
\een
Therefore,   there is no extra matrix-vector multiplications introduced  within a linesearch step for GRPDA-L as
$K^\top y_{n}$ can always be obtained via a convex combination of the already computed quantities $K^\top y_{n-1}$ and $K^\top(K x_{n}- b)$.
On the other hand,  problem \eqref{saddle_point} is equivalent to
\ben
\max_{x\in \R^q}\min_{y\in \R^p} ~~f^*(y) + \langle -K^\top y, x \rangle  -g(x).
\een
Then, by swapping ``$\max_{x\in \R^q}$" with ''$\min_{y\in \R^p}$" and $(g,K,x,q)$ with $(f^*,-K^T,y,p)$, the strong convexity of $\frac 1 2 \|y\|^2 + \langle b,y\rangle$
(previously $f^*$) can be transferred to $g$, which enables the application of the accelerated version, i.e., AGRPDA-L.  Therefore, the algorithms to compare in this experiment are GRPDA-L, AGRPDA-L and PDA-L (i.e., \cite[Algorithm 1]{Malitsky2018A}).
GRPDA without linesearch will not be compared since it is the most inefficient.

We set $seed=100$ and generate a random vector $x^*\in \bR^q$ for which $s$ random coordinates are drawn from $\cN(0,1)$ and the rest are set to be zero. Then, we generate $\omega\in \bR^p$ with entries drawn from $\cN(0, 0.1)$ and set $b = Kx^* +\omega$.
The matrix $K\in \bR^{p\times q}$ is constructed in the following ways:
\bi
\item[(i)] All entries of $K$ are generated independently from $\cN(0, 1)$. The $s$ entries of $x^*$ are drawn from the uniform distribution in $[-10, 10]$;
\item[(ii)]
    First, we generate a matrix $A\in\R^{p\times q}$, whose entries are independently drawn from $\cN(0,1)$. Then, for a scalar $v\in(0, 1)$ we construct the matrix $K$ column by column as follows: $K_1 =  A_1/\sqrt{1-v^2}$ and $K_j = vK_{j-1} + A_j$, $j=2,\ldots,q$.
    Here $K_j$ and $A_j$ represent the $j$th column of $K$ and $A$, respectively.
     As $v \in (0,1)$ becomes larger, $K$ becomes more ill-conditioned.
     In this experiment we take $v = 0.5$ and $v=0.9$, respectively.
     The sparse vector $x^*$ is generated in the same way as in case (i).
\ei


In both cases, the regularization parameter $\mu$ was set to be $0.1$.
Similar to \cite{Malitsky2018A}, we set $\beta= 400$ for PDA-L and GRPDA-L. For AGRPDA-L, we set $\gamma=0.01$ and $\beta_0=1$ as in \cite{ChY2020Golden}.
The initial points for all algorithms are $x_0 = (0,\ldots, 0)^\top$ and $y_0=Kx_0-b$.

In this experiment, we first ran all the algorithms by a sufficiently large number of iterations and then chose the minimum attainable function value as an approximation of the optimal value $F^*$ of \eqref{pro_lasso1}.
Again, for a given $\epsilon>0$, we terminate the algorithms when $F(x_n)-F^*<\epsilon$ or $n = n_{\max}$. In this experiment, we set $\epsilon=10^{-12}$ and $n_{\max}=8\times 10^4$ to examine  their convergence behavior.

\begin{table}[htpb]
\caption{Results of GRPDA-L, PDA-L and AGRPDA-L on the LASSO problem \eqref{pro_lasso1}.}\label{table2}
\vspace{.1 in}
\center
\small
\begin{tabular}{|c|c|ccc|ccc|ccc|}
\hline
\multirow{2}{*}{$\epsilon$}&\multirow{2}{*}{Test}&\multicolumn{3}{|c|}{PDA-L} &\multicolumn{3}{|c|}{GRPDA-L}&\multicolumn{3}{|c|}{AGRPDA-L}\\
                                                    &&Iter & \#LS &Time &Iter& \#LS &Time	&Iter& \#LS &Time\\
\hline
\multirow{3}{0.8cm}{$10^{-8}$}&(i) 	& 4757  &  4688 & 14.7  & 4043 	& 1186  & 12.5  & 2450   & 723 &	10.9 	\\
&(ii)  $v=0.5$		& 6167  &  6130 & 21.3  & 5213 	& 1532  & 17.1  &  1759  & 517 &8.1	 	\\
&(ii)  $v=0.9$		& 27899  & 27889  & 94.2  &  26080	&  7697 & 86.7  & 7480   & 2208 &	34.5 	\\
\hline
\multirow{3}{0.8cm}{$10^{-12}$}&(i) 	& 11234  & 11082  & 34.8  &  9287	& 2735  & 28.8  &  3539  & 1043 &	15.8 	\\
&(ii) $v=0.5$	&  14913 &  14831 & 49.1  & 12330 	& 3634  &  40.3 &  3124  & 922 &14.3	 	\\
&(ii) $v=0.9$		& 64003  & 64007  & 213.8  &  55758	&  16464 & 183.1  &  12216  & 3608 &	55.6 	\\
\hline
\end{tabular}
\end{table}

It can be seen from the results in Table \ref{table2} that similar conclusion can be drawn, i.e., PDA-L requires approximately one extra linesearch trial step per outer iteration, while
GRPDA-L and AGRPDA-L require roughly one extra linesearch trial step per three outer iterations. Since the proximal operator $\prox_{\tau f^*}(\cdot)$ is linear and does not incur extra computations, GRPDA-L and PDA-L perform similarly in terms of outer iteration and CPU time.
In comparison, AGRPDA-L performs the best, i.e., takes much less number of iterations and CPU time.
The evolution of  function value residuals $F(x_n)-F^*$ versus CPU time is given in Figure \ref{Fig:lasso}, from which
it can be seen that AGRPDA-L, which takes advantage of strong convexity, is much faster than GRPDA-L and PDA-L, which do not.
These results lead to the conclusion that strong convexity of the component functions, if properly explored, helps to improve the performance of primal-dual type algorithms.

\begin{figure}[htp]
\centering
\subfigure[Case (i).]{
\includegraphics[width=0.32\textwidth]{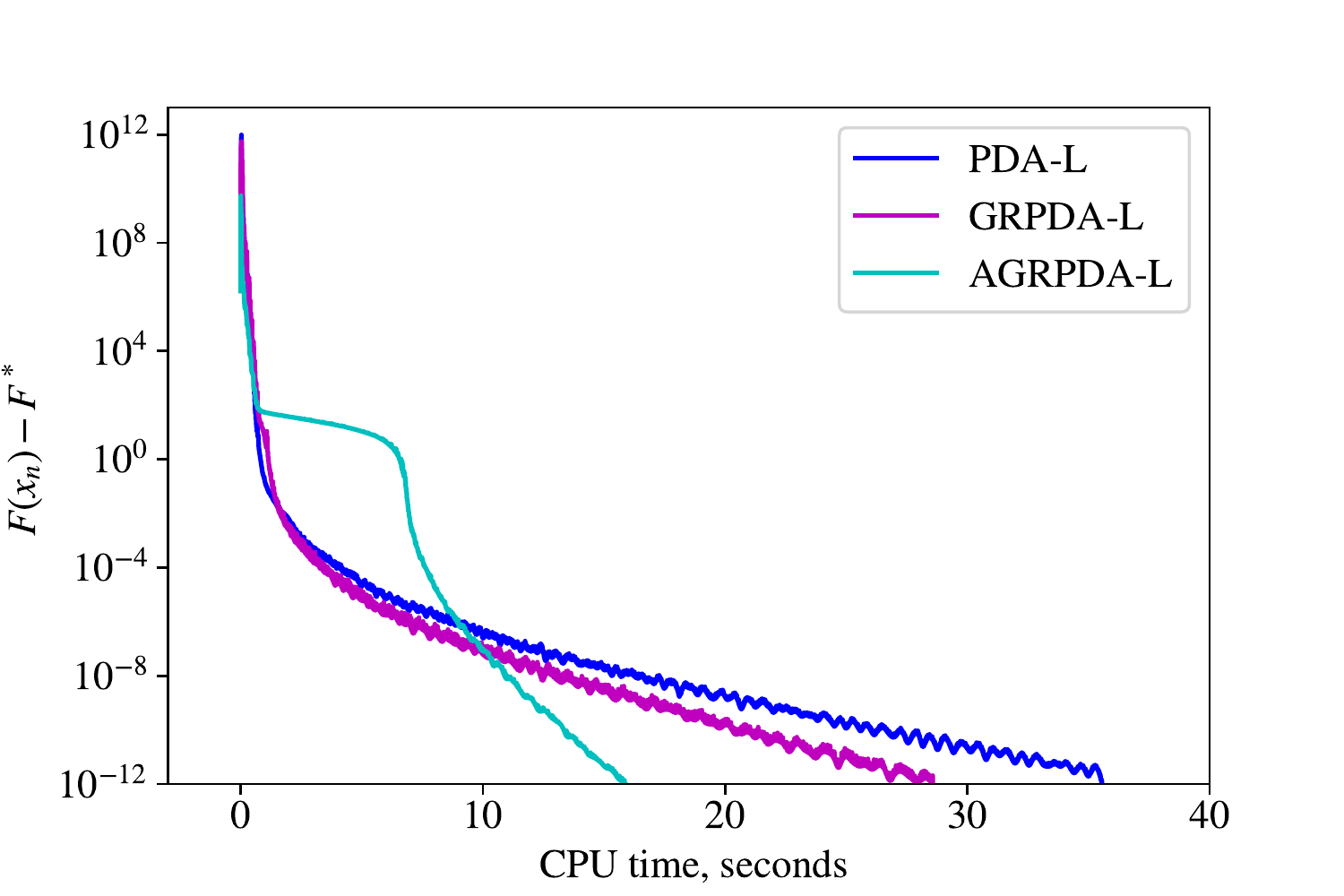}}
\subfigure[Case (ii) with $v = 0.5$.]{
\includegraphics[width=0.32\textwidth]{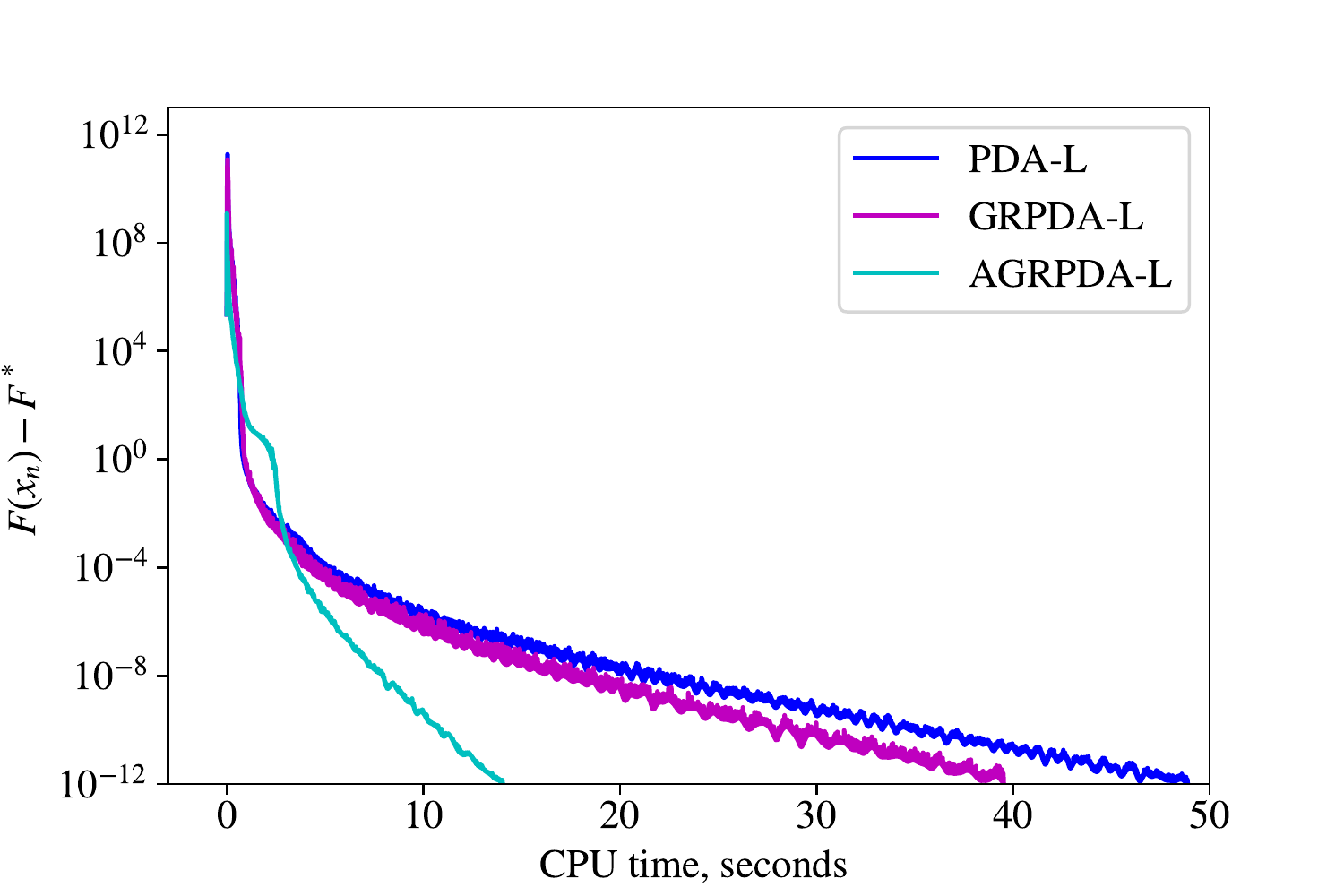}}
\subfigure[Case (ii) with $v = 0.9$.]{
\includegraphics[width=0.32\textwidth]{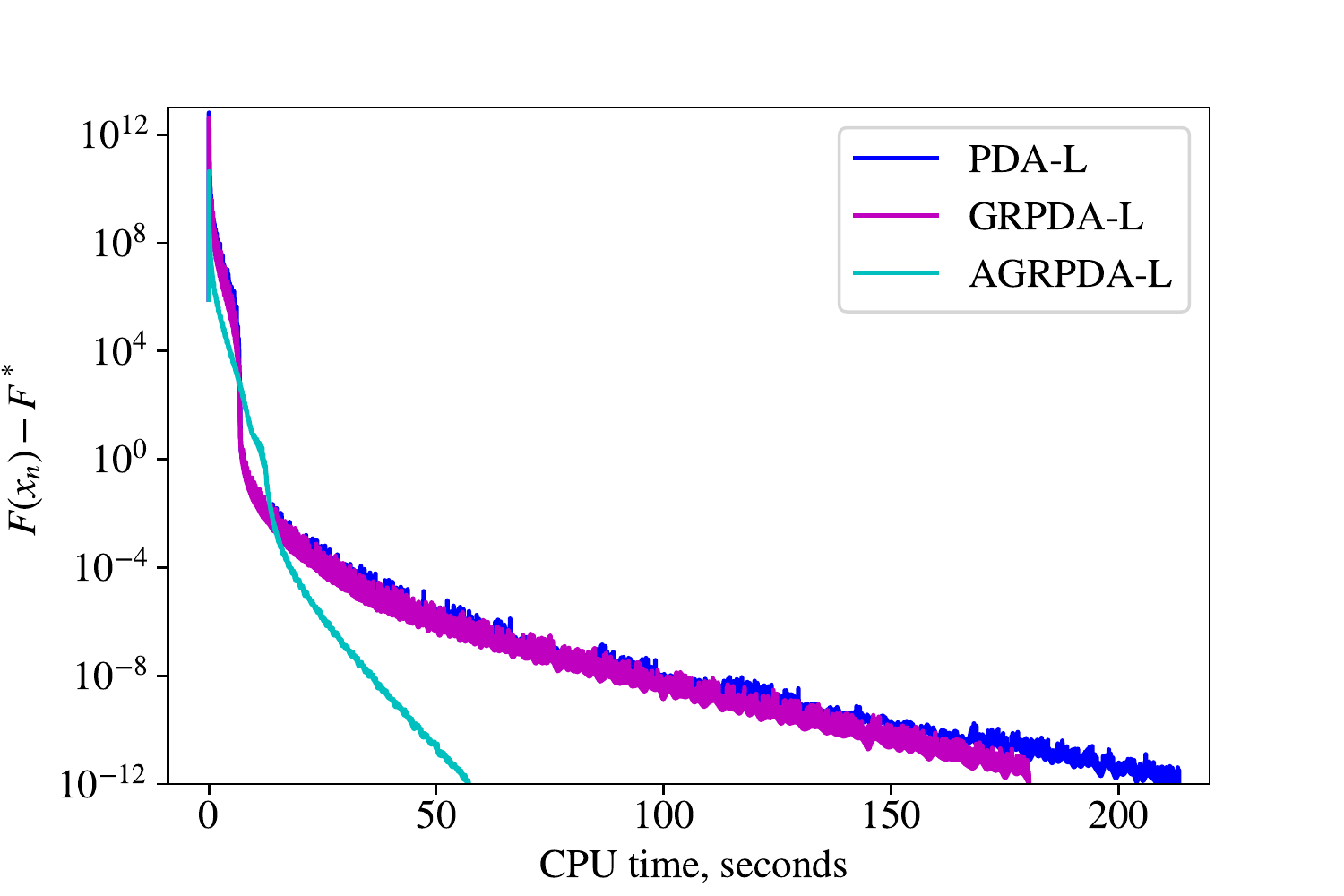}}
\caption{Experimental results on the LASSO problem. Decreasing behavior of function value versus CPU time.
From left to right: Case (i) with $(p,q,s) = (1000,2000,100)$, Case (ii) with $(p,q,s,v) = (1000,2000,10,0.5)$, and Case (ii) with $(p,q,s,v) = (1000,2000,10,0.9)$.
} 
\label{Fig:lasso} 
\end{figure}

\section{Conclusions}
\label{sec-conclusion}
In this paper, we have incorporated linesearch strategy into the golden ratio primal-dual algorithm (GRPDA) recently proposed  in \cite{ChY2020Golden}.
Global convergence and $\cO(1/N)$ ergodic convergence rate measured by primal-dual function gap
 are established in the general convex case.
When either one of the component functions is strongly convex, accelerated GRPDA with linesearch is proposed, which achieves $\cO(1/N^2)$ ergodic rate of convergence. Furthermore, when both component functions are strongly convex, nonergodic linear convergence results are obtained.
The proposed linesearch strategy does not require to evaluate the spectral norm of $K$ and adopts potentially much larger stepsizes.
In cases such as regularized least-squares problem, the proposed linesearch strategy
only requires minimal extra computational cost and thus is particularly useful.
Our numerical experimental on minimax matrix game and LASSO problems demonstrate
 the benefits gained by taking advantage of strong convexity and incorporating our proposed linesearch.
Experimentally, the extra linesearch trial steps used by golden ratio type primal-dual algorithms are about one-third of those proposed by Malitsky \cite{Malitsky2018A} and larger stepsizes can be accepted,
which could be significant  when the evaluations of proximal operators are nontrivial.

\section*{Acknowledgements}
Xiaokai Chang was supported by the Innovation Ability Improvement Project of Gansu (Grant No. 2020A022) and the Hongliu Foundation of Firstclass Disciplines of Lanzhou University of Technology, China.
Junfeng Yang was supported by the National Natural Science Foundation of China (NSFC grants 11922111 and 11771208).
Hongchao Zhang was supported by the USA National Science Foundation under grant DMS-1819161.

\bibliographystyle{abbrv}

\begin{appendix}
\section{Proof of Lemma \ref{lem_bound}}\label{proof:lem-bound}

\begin{proof}
(i) Since \eqref{y-ls} is fulfilled whenever $\tau_n$ satisfies $\sqrt{\beta\tau_{n}} L \leq \sigma\sqrt{\psi / \tau_{n-1}}$, we have from $\tau_n = \varphi \tau_{n-1} \mu^i$ that \eqref{y-ls} is fulfilled
whenever $\sqrt{\mu^i } \leq \underline{\tau} / \tau_{n-1} $.
Hence, \eqref{y-ls} will be fulfilled by the linesearch procedure in Step 2 since $\mu\in(0,1)$.

(ii) We consider two cases. \\
Case 1: There exists a $\bar{k}$ such that $\tau_n \ge \underline{\tau}$ for all $n \ge \bar{k}$.
If there is no infinite subsequence $\{n_k: k \ge 1\} \subseteq \{1,2, \ldots\}$ such that
$\delta_{n_k} \ge \rho$, we have $\delta_n < \rho < 1$ for all $n$ sufficiently large.
Then, we will have from $\tau_n = \tau_0 \prod_{i= 1}^n \delta_i$ and $\rho < 1$ that
$\lim_{n \to \infty} \tau_n = 0$,
which contradicts with $\tau_n \ge \underline{\tau}$ for all $n \ge \bar{k}$
Hence, in this case, property (ii) holds. \\
Case 2:  There exists an infinite subsequence $\{n_i: i \ge 1\}$ such that
$\tau_{n_i} < \underline{\tau}$.
By the linesearch procedure in Step 2, for any $\tau_n \le \underline{\tau}$,
the initial trial $\tau_{n+1} = \varphi \tau_n$ will satisfy \eqref{y-ls} and be accepted
by the linesearch. Hence,  defining $\ell(t) = \lfloor \log_{\varphi} (\underline{\tau}/t) \rfloor$
where $t < \underline{\tau}$, we have the following property:
\begin{eqnarray}\label{property-A}
& \mbox{If $\tau_n < \underline{\tau}$, then with $k':=n+\ell(\tau_n)$ we have} & \nonumber \\
& \mbox{$\tau_k <  \underline{\tau}$ and
$\tau_{k+1} =\varphi \tau_k$ for all $k=n, n+1, \ldots, k'$, and $ \underline{\tau} \le \tau_{k'+1}
< \varphi \underline{\tau}$}.&
\end{eqnarray}
Here, $\lfloor t \rfloor$ is the largest integer less or equal to $t$.
So, for any  $\tau_{n_i} < \underline{\tau}$, we have
$\tau_{k'+1} \ge \underline{\tau}$, where $k' = n_i + \ell(\tau_{n_i})$.
In addition, we have
$\delta_{k'+1} = \tau_{k'+1}/\tau_{k'} = \varphi > \rho$.
Hence, in this case property (ii) also holds.

%

(iii) First, if $\tau_0 < \underline{\tau}$, by property \eqref{property-A}, we have
 $\tau_s = \tau_0 \varphi^s \ge \underline{\tau}$,
where $s = \ell(\tau_0) + 1$.
 Hence, without losing of generality, to show property (iii),
we can simply assume $\tau_1 \ge \underline{\tau}$.

Now, we show the following property:
\begin{equation}\label{property-B}
\mbox{ For any $\tau_{n-1} \ge \underline{\tau}$ and  $\tau_{n} < \underline{\tau}$, we have
 (\ref{tau-aaa}) and (\ref{tau-bbb}) hold.}
\end{equation}
Since $\tau_n < \tau_{n-1}$,
by the linesearch procedure in Step 2, we have $\tau_n = \varphi \tau_{n-1} \mu^j$
with $j \ge 1$ and $\sqrt{\beta \varphi \tau_{n-1} \mu^{j-1}} L > \sigma \sqrt{\psi/\tau_{n-1}}$,
which is equivalent to
\begin{equation}\label{tau-aaa}
\tau_{n-1} > \underline{\tau} \mu^{-(j-1)/2}.
\end{equation}
On the other hand, by \eqref{tau-aaa} and $j \ge 1$, we have
\begin{eqnarray}\label{tau-bbb}
1 + \ell(\tau_n) &= & 1 + \lfloor \log_{\varphi} (\underline{\tau}/\tau_n) \rfloor
= 1 + \lfloor \log_{\varphi} (\underline{\tau}/( \varphi \tau_{n-1} \mu^j)) \rfloor \nonumber \\
 &=& \lfloor \log_{\varphi} (\underline{\tau}/( \tau_{n-1} \mu^j)) \rfloor
 \le \left\lfloor \frac{j+1}{2}\log_{\varphi} (1/\mu) \right\rfloor \nonumber \\
&\le& j \log_{\varphi} (1/\mu).
\end{eqnarray}

Let $\mathcal{Z}^+ := \{1,2,3,\ldots\}$ be the set of positive integers.
Given two integers $i_1 \le i_2$, let
interval $[i_1, i_2] := \{i \in \mathcal{Z}^+: i_1 \le i \le i_2\}$ and
interval $[i_1, \infty) :=  \{i \in \mathcal{Z}^+: i_1 \le i < \infty \}$.
Then, based on properties \eqref{property-A},
 \eqref{property-B} and the assumption $\tau_1 \ge \underline{\tau}$, there exist a set of positive integers
$\mathcal{K} := \cup_{i=1}^{|\mathcal{K}|}\{k_i\} \subseteq \mathcal{Z}^+$ and an associated integer set
 $\mathcal{M} := \cup_{i=1}^{|\mathcal{K}| - 1} \{m_i\} \subseteq \mathcal{Z}^+$,
 where $|\mathcal{K}|\geq 1$ denotes the cardinality of $\mathcal{K}$ that is either a finite number or infinity,
 such that they partition $\mathcal{Z}^+$, i.e., $\mathcal{Z}^+ = \cup_{i=1}^{\infty}[k_i, k_{i+1}-1]$
 if $|\mathcal{K}| =\infty$ or
 $\mathcal{Z}^+ = \cup_{i=1}^{|\mathcal{K}|-1}[k_i, k_{i+1}-1] \cup [k_{|\mathcal{K}|}, \infty)$
 if $|\mathcal{K}| < \infty$,
  and the following properties hold:
 \begin{enumerate}
\item[(a)] $k_1 = 1$ and $k_i < m_i < k_{i+1}$ for all $i$;
\item[(b)] $\tau_k \ge \underline{\tau}$ for all $k \in [k_i, m_i-1]$ and
 $\tau_k < \underline{\tau}$ for all $k \in [m_i, k_{i+1}-1]$, see the diagram below
 \ben
\overbrace{\ldots,~~~k_i-1,} ~~ \overbrace{ k_i,~~~\ldots,~~~m_i-1,}^{t_k\geq\underline{\tau}, \,\, (m_i-k_i) \text{~times}} ~~ \overbrace{m_i,~~~\ldots,~~~k_{i+1}-1,}^{t_k<\underline{\tau}, \,\, (k_{i+1}-m_i) \text{~times}} ~~ \overbrace{k_{i+1},~~\ldots}
\een
\item[(c)] If $|\mathcal{K}| < \infty$,  $ \tau_k \ge \underline{\tau}$ for all
$k \ge k_{|\mathcal{K}|}$; Otherwise,  $|\mathcal{K}| = \infty$ and
$\mathcal{Z}^+  = \cup_{i=1}^{\infty} [k_i, k_{i+1}-1]$;
\item[(d)] $ \tau_{k_i} < \varphi \underline{\tau}$ by property \eqref{property-A} for all $k_i \in
\mathcal{K}\setminus \{k_1\}$;
\item[(e)] $\tau_{m_i-1} >  \underline{\tau} \mu^{-(j-1)/2}$ by \eqref{tau-aaa}
and $k_{i+1}-m_i = \ell(\tau_{m_i}) + 1 \le j \log_{\varphi}(1/\mu)$ by \eqref{tau-bbb}
for all $m_i \in \mathcal{M}$ and some $j \ge 1$ depending on  $m_i$.
 \end{enumerate}
Now, we consider any interval $[k_i, k_{i+1}-1] = [k_i, m_i-1] \cup [m_i,  k_{i+1}-1]$.
Let $j\geq 1$ be the integer associated with $m_i$ such that property (e) holds.
Since $\tau_{k+1} \le \varphi \tau_k$ for all $k$, we have
\ben
\tau_{m_i-1} < \tau_{k_i} \varphi^{m_i-1-k_i}.
\een
Then, by properties (d) and (e), for $k_i \ne k_1=1$, we have
 $ \tau_{k_i} < \varphi \underline{\tau}$ and
 $\tau_{m_i-1} \ge \underline{\tau} \mu^{-(j-1)/2}$, which
 together with the above inequality gives
 $\underline{\tau} \mu^{-(j-1)/2} \leq \tau_{m_i-1} < \tau_{k_i} \varphi^{m_i-1-k_i} <
 \underline{\tau}  \varphi^{m_i-k_i}$, which is equivalent to
 \ben
 (j-1) \log_{\varphi} (1/\sqrt{\mu}) < m_i - k_i.
 \een
For $k_i = k_1 =1$, we have $\underline{\tau} \mu^{-(j-1)/2} < \tau_{1} \varphi^{m_i-2}$, or
$(j-1) \log_{\varphi} (1/\sqrt{\mu}) \le \log_{\varphi} \big(\tau_1/\underline{\tau} \big) +  m_i - 2$.
So, there exists an integer constant $\bar{j} \ge 1$, which does not depend on either $k_i$ or $m_i$,
such that
\begin{equation}\label{X-CCC}
\mbox{$m_i - k_i  \geq  \frac{j}{4} \log_{\varphi} (1/\mu)$ \; whenever \; $j \ge \bar{j}$.}
 \end{equation}
Next, we show that there exists a constant $\bar{c} >0$, which does not depend on $m_i$
or the interval $[k_i, k_{i+1}-1]$, such that
\begin{equation}\label{X-interval-ratio}
m_i - k_i \ge \bar{c}\, (k_{i+1}-m_i).
\end{equation}
Noticing that by property (e), we have $ k_{i+1} - m_i\le j \log_{\varphi}(1/\mu)$.
Hence, it follows from \eqref{X-CCC} that if $j \ge \bar{j}$ we have
\begin{equation*}
m_i - k_i \ge \frac{ \log_{\varphi} (1/\mu)}{4  \log_{\varphi}(1/\mu)}  (k_{i+1}-m_i) =
\frac{1}{4} (k_{i+1}-m_i).
\end{equation*}
On the other hand, if $ 1\le j < \bar{j}$,  we have from  $ k_{i+1} - m_i\le j \log_{\varphi}(1/\mu)$ that
\ben
 m_i - k_i \geq 1 >  j/ \bar{j}  \ge   \frac{k_{i+1}-m_i}{\bar{j} \log_{\varphi}(1/\mu)}.
\een
Hence, \eqref{X-interval-ratio} holds with
$\bar{c} = \min\{1/( \bar{j} \log_{\varphi}(1/\mu) ), 1/4\} > 0$.

Apparently, if follows from \eqref{X-interval-ratio} that
$m_i - k_i \ge \hat{c}\, (k_{i+1}-k_i)$ with $\hat{c} := \bar{c}/(1+\bar{c}) > 0$.
If $|\mathcal{K}| = \infty$, given any $N \ge 1$, it follows from property (c) that
$N \in [k_i, k_{i+1}-1]$ for certain $i \ge 1$. Hence, by the definition of
$\mathcal{K}_N = \{1 \le n \le N: \tau_n \ge \underline{\tau}\}$, we have
$\mathcal{K}_N = \cup_{j=1}^i \left( \mathcal{K}_N \cap [k_j, k_{j+1}-1] \right)$.
Then, it follows from \eqref{X-interval-ratio} and property (b) that
\begin{eqnarray}\label{X-ratio-1}
|\mathcal{K}_N | &=& \sum_{j=1}^i \left| \mathcal{K}_N \cap [k_j, k_{j+1}-1] \right|
                 \ge  \sum_{j=1}^i \left| \mathcal{K}_N \cap [k_j, m_j-1] \right| \nonumber \\
                & = & \min\{N, m_i \} - k_i  + \sum_{j=1}^{i-1} \left|[k_j, m_j-1] \right|
                 = \min\{N, m_i \} - k_i  + \sum_{j=1}^{i-1}  (m_j-k_j) \nonumber \\
                 &\ge & \hat{c} (N-k_i) +  \hat{c }\sum_{j=1}^{i-1}  (k_{j+1}-k_j)
                 = \hat{c} N,
\end{eqnarray}
where $\hat{c} = \bar{c}/(1+\bar{c})$.
If $|\mathcal{K}| < \infty$, by property (c), for all $n \ge k_{|\mathcal{K}|}$
we have $\tau_n \ge  \underline{\tau}$. This property together with \eqref{X-ratio-1} implies
$|\mathcal{K}_N | \ge \hat{c} N$ for any $N \ge 1$.
\end{proof}

\section{Proof of Lemma \ref{lem-beta-cnn}}\label{proof:lem-beta-cnn}
\begin{proof}
(i) This conclusion follows almost from an identical proof of conclusion (i) in Lemma~\ref{lem_bound}
except by replacing $\beta$ by $\beta_n$ and setting $\sigma =1$.

(ii) Let $h(\tau) := 1+ \frac{(\psi-\varphi)\gamma_g\tau}{\psi+\varphi\gamma_g\tau}$.
Since $h(\tau)$ is strictly increasing with respect to $\tau>0$, we have that
$1 < 1+\gamma_g \omega_{n}\tau_{n-1} = h(\tau_{n-1}) < \varsigma :=\psi/\varphi$.
So, by \eqref{beta-gstrong}, $\beta_{n-1} < \beta_{n} = \beta_{n-1} h(\tau_{n-1}) < \varsigma \beta_{n-1}$.
Therefore, for any $\sqrt{\beta_{n-1}} \tau_{n-1} \le 1/L$,  we have
$\sqrt{\beta_{n}} \tau_{n-1} \le \sqrt{\varsigma \beta_{n-1}} \tau_{n-1}
\le \sqrt{\varsigma}/L = \frac{1}{L} \sqrt{\frac{\psi}{\varphi}}$,
which by the linesearch procedure in Step 2 implies that
the initial trial $\tau_{n} = \varphi \tau_{n-1}$ will satisfy \eqref{y-ls-gstrong} and be accepted
by the linesearch.
Hence, analogous to property \eqref{property-A},
 we have the following property:
\begin{eqnarray} \label{property-A-strong}
& \mbox{ If $\sqrt{\beta_n} \tau_n < \frac{1}{L}$, then there exists an integer $\ell_n \ge 0$
such that $k' :=n+\ell_n$ satisfies: }  &  \nonumber\\
&  \mbox{$\sqrt{\beta_k} \tau_k < \frac{1}{L}$ and $\tau_{k+1} = \varphi \tau_k$
for all $k = n, n+1, \ldots, k'$, and $\sqrt{\beta_{k'+1}} \tau_{k'+1} \ge \frac{1}{L}$
}, &
\end{eqnarray}
which, by $ \beta_n < \beta_{n+1} < \varsigma \beta_n $ and $\tau_{n+1} \le \varphi \tau_n$
for all $n \ge 1$, also implies
\begin{equation}\label{XXX}
\ell_n \leq \vartheta \big(\sqrt{\beta_n} \tau_n \big) \quad \mbox{and} \quad
\sqrt{\beta_{k'+1}} \tau_{k'+1} < \sqrt{\varsigma \beta_{k'}}
\varphi \tau_{k'} = \sqrt{\psi \varphi} \sqrt{\beta_{k'}} \tau_{k'}
< \theta,
\end{equation}
where $\vartheta(t) := \lfloor \log_{\varphi}(1/(L t)) \rfloor$ for $t < 1/L$
and $\theta := \sqrt{\psi \varphi}/L$.

Analogous to property \eqref{property-B}, we show the following property:
\begin{equation} \label{property-B-strong}
\mbox{For any $\sqrt{\beta_{n-1}} \tau_{n-1} \ge \frac{1}{L}$ and $\sqrt{\beta_n} \tau_{n} < \frac{1}{L}$,
we have  (\ref{AAA}) and (\ref{BBB}) hold. }
\end{equation}
Since $\beta_n > \beta_{n-1}$ and $\sqrt{\beta_n} \tau_{n} < \sqrt{\beta_{n-1}} \tau_{n-1}$, by the linesearch procedure in
Step 2, we have $\tau_n = \varphi \tau_{n-1} \mu^j$ with $j \ge 1$ and
$\sqrt{ \beta_n \varphi  \tau_{n-1} \mu^{j-1}} L > \sqrt{\psi/\tau_{n-1}}$, which gives
\begin{equation}\label{AAA}
\sqrt{\beta_n} \tau_{n-1} \ge \sqrt{\frac{\psi}{ \varphi}} \frac{\mu^{-{j-1 \over 2}}}{L}
\quad \mbox{and} \quad \sqrt{\beta_{n-1}} \tau_{n-1} > \sqrt{\beta_n/\varsigma} \tau_{n-1} \ge
\frac{\mu^{-{j-1 \over 2}}}{L }.
\end{equation}
It follows from \eqref{XXX},
$\psi > \varphi > 1$, $j \ge 1$ and  \eqref{AAA} that
\begin{eqnarray}\label{BBB}
1+ \ell_n &\leq& 1+  \vartheta (\sqrt{\beta_n} \tau_n )
  = 1 + \left\lfloor \log_{\varphi}\big(1/(L \sqrt{\beta_n} \tau_n)\big) \right\rfloor\nonumber \\
& = & 1+\left\lfloor \log_{\varphi}\big(1/(L \sqrt{\beta_n} \varphi \tau_{n-1} \mu^j)\big) \right\rfloor
= \left\lfloor \log_{\varphi}(1/(L \sqrt{\beta_n} \tau_{n-1} \mu^j)) \right\rfloor \nonumber \\
&\le & \left\lfloor \frac{j+1}{2}\log_{\varphi}(1/\mu) +
 \frac{1}{2}\log_{\varphi} (\varphi/\psi) \right\rfloor \nonumber \\
 &\le & j  \log_{\varphi}(1/\mu).
\end{eqnarray}

Let $\mathcal{Z}^+ := \{1,2,3,\ldots\}$ be the set of positive integers. Given two
integers $i_1 \le i_2$, let
interval $[i_1, i_2] := \{i \in \mathcal{Z}^+: i_1 \le i \le i_2\}$ and
interval $[i_1, \infty) :=  \{i \in \mathcal{Z}^+: i_1 \le i < \infty \}$.
To show this lemma, without losing of generality, by property \eqref{property-A-strong}, we can
simply assume $\sqrt{\beta_1} \tau_1 \geq 1/L$.
Then, based on properties \eqref{property-A-strong} and
 \eqref{property-B-strong}, there exist a set of positive integers
$\mathcal{K} := \cup_{i=1}^{|\mathcal{K}|}\{k_i\} \subseteq \mathcal{Z}^+$ and an associated integer set
 $\mathcal{M} := \cup_{i=1}^{|\mathcal{K}| - 1} \{m_i\} \subseteq \mathcal{Z}^+$,
 where $|\mathcal{K}|\geq 1$ denotes the cardinality of $\mathcal{K}$ that is either a finite number or infinity,
 such that they partition $\mathcal{Z}^+$, i.e., $\mathcal{Z}^+ = \cup_{i=1}^{\infty}[k_i, k_{i+1}-1]$ if $|\mathcal{K}| =\infty$ or
 $\mathcal{Z}^+ = \cup_{i=1}^{|\mathcal{K}|-1}[k_i, k_{i+1}-1] \cup [k_{|\mathcal{K}|}, \infty)$ if $|\mathcal{K}| < \infty$,
  and the following properties hold:
 \begin{enumerate}
\item[(a)] $k_1 = 1$ and $k_i < m_i < k_{i+1}$ for all $i$;
\item[(b)] $\sqrt{\beta_{k}} \tau_k \ge 1/L$ for all $k \in [k_i, m_i-1]$ and
 $\sqrt{\beta_{k}} \tau_k < 1/L$ for all $k \in [m_i, k_{i+1}-1]$;
\item[(c)] If $|\mathcal{K}| < \infty$,  $\sqrt{\beta_{k}} \tau_k \ge 1/L$ for all
$k \ge k_{|\mathcal{K}|}$; Otherwise,  $|\mathcal{K}| = \infty$ and
$\mathcal{Z}^+  = \cup_{i=1}^{\infty} [k_i, k_{i+1}-1]$;
\item[(d)] $\sqrt{\beta_{k_i}} \tau_{k_i} < \theta$ by \eqref{XXX} for all $k_i \in
\mathcal{K}\setminus \{k_1\}$, where $\theta =  \sqrt{\psi \varphi}/L$;
\item[(e)] $\sqrt{\beta_{m_i-1}} \tau_{m_i-1} > \mu^{-(j-1)/2}/L$ by \eqref{AAA}
and $k_{i+1}-m_i = \ell_{m_i} + 1 \le j \log_{\varphi}(1/\mu)$ by \eqref{BBB}
for all $m_i \in \mathcal{M}$ and some $j \ge 1$ depending on  $m_i$, where $\ell_{m_i}$ is defined in property \eqref{property-A-strong} associated with $\sqrt{\beta_{m_i}}\tau_{m_i}$.
 \end{enumerate}
Now, we consider any interval $[k_i, k_{i+1}-1] = [k_i, m_i-1] \cup [m_i,  k_{i+1}-1]$.
Let $j\geq 1$ be the integer associated with $m_i$ such that property (e) holds.
Since $\sqrt{\beta_{k+1}} \tau_{k+1} <  \sqrt{\varsigma \beta_k} \varphi \tau_k =
 \sqrt{\psi \varphi} \sqrt{\beta_k} \tau_k$ for all $k$, we have
\ben
\sqrt{\beta_{m_i-1}} \tau_{m_i-1} < \sqrt{\beta_{k_i}} \tau_{k_i} \rho^{(m_i-1-k_i)/2},
\een
where $\rho := \psi \varphi >1$.
By properties (d) and (e), for $k_i \ne k_1=1$, we have  $\sqrt{\beta_{k_i}} \tau_{k_i} < \theta$ and
 $\sqrt{\beta_{m_i-1}} \tau_{m_i-1} \ge \mu^{-(j-1)/2}/L$, which
 together with the above inequality gives
 \ben
 \mu^{-(j-1)/2}/L < \theta  \rho^{(m_i-1-k_i)/2} \quad \Longleftrightarrow \quad
 j \log_{\rho} (1/\mu) &<& \log_{\rho} \big((\theta L)^2/(\mu \rho)\big) + m_i - k_i\\
 &=&\log_{\rho} \big(1/\mu\big) + m_i - k_i.
 \een
For $k_i = k_1 =1$, we have
$j \log_{\rho} (1/\mu) \le \log_{\rho} \big((\theta_1 L)^2/(\mu \rho)\big) +  m_i - 1$, where
$\theta_1 := \sqrt{\beta_1} \tau_1$.
So, there exists an integer constant $\bar{j} \ge 1$, which does not depend on either $k_i$ or $m_i$,
such that
\begin{equation}\label{CCC}
\mbox{$m_i - k_i  \geq  \frac{j}{2} \log_{\rho} (1/\mu)$ \; whenever \; $j \ge \bar{j}$.}
 \end{equation}
Next, we show that there exists a constant $\bar{c} >0$, which does not depend on $m_i$, such that
\begin{equation}\label{interval-ratio}
m_i - k_i \ge \bar{c}\, (k_{i+1}-m_i + 1).
\end{equation}
Noticing that by property (e), we have
\begin{equation}\label{DDD}
k_{i+1} - m_i +1 \le j \log_{\varphi}(1/\mu) +1 \le j \,(\log_{\varphi}(1/\mu) +1).
\end{equation}
Hence, it follows from \eqref{CCC} and \eqref{DDD} that whenever $j \ge \bar{j}$ we have
\begin{equation*}
m_i - k_i \ge \frac{ \log_{\rho} (1/\mu)}{2 (\log_{\varphi}(1/\mu) +1)}  (k_{i+1}-m_i + 1).
\end{equation*}
On the other hand, if $ 1\le j < \bar{j}$, then by \eqref{DDD} we have
\ben
 m_i - k_i \geq 1 >  j/ \bar{j}  \ge   \frac{k_{i+1}-m_i+1}{\bar{j} \big(\log_{\varphi}(1/\mu)+1\big)}.
\een
Hence, \eqref{interval-ratio} holds with
$\bar{c} = \min\{1/(\bar{j} (\log_{\varphi}(1/\mu))+1)), \log_{\rho} (1/\mu)/(2 (\log_{\varphi}(1/\mu)+1))\} > 0$.

Now, by \eqref{beta-gstrong} we have $\beta_{n+1} = \beta_n h(\tau_n) = \beta_n (1+ t_n)$, where
$t_n = \frac{(\psi-\varphi)\gamma_g\tau_n}{\psi+\varphi\gamma_g\tau_n}$.
 Then, we have
\begin{eqnarray}\label{beta-diff}
\sqrt{\beta_{n+1}} - \sqrt{\beta_n} &= & \frac{\beta_{n+1} - \beta_n}{\sqrt{\beta_{n+1}} + \sqrt{\beta_n}}
= \frac{t_n \beta_n}{\sqrt{\beta_{n+1}} + \sqrt{\beta_n}} \nonumber \\
&\ge & \frac{t_n \beta_n}{(\sqrt{\varsigma} +1) \sqrt{\beta_n}} \ge \tilde{c}
\min \{\tau_n, 1\} \sqrt{\beta_n},
\end{eqnarray}
where $\tilde{c} >0$ is some constant. Consider any $k \in [k_i, k_{i+1}-1] = [k_i, m_i-1] \cup [m_i,  k_{i+1}-1]$.
If $k \in [m_i, k_{i+1}-1]$,
by \eqref{interval-ratio} we have
\begin{equation}\label{ratio-2}
\frac{m_i - k_i}{k_{i+1} - k_i+1} = \frac{m_i - k_i}{(m_i - k_i) + (k_{i+1} - m_i+1)}
\ge \frac{1}{1+1/\bar{c}} = \frac{\bar{c}}{1+\bar{c}},
\end{equation}
and it thus follows from \eqref{beta-diff},
  property (b),   $\beta_n \ge \beta_0 >0$ and \eqref{ratio-2} that
 \begin{eqnarray}
 \sqrt{\beta_k} - \sqrt{\beta_{k_i}} &=& \sum_{n=k_i}^{k-1}
 \Big(\sqrt{\beta_{n+1}} - \sqrt{\beta_n}\Big)
 \ge \tilde{c} \sum_{n=k_i}^{k-1}  \min \{\tau_n, 1\} \sqrt{\beta_n} \nonumber \\
 &\ge& \tilde{c} \sum_{n=k_i}^{m_i-1}  \min \{\tau_n, 1\} \sqrt{\beta_n}
 \ge \tilde{c} \sum_{n=k_i}^{m_i-1} \min \{1/L, \sqrt{\beta_0}\} \nonumber \\
 & = &
  \tilde{c}  (m_i - k_i) \min \{1/L, \sqrt{\beta_0}\}
  \ge   \hat{c} (k_{i+1} - k_i + 1) \ge \hat{c}\, (k - k_i + 1), \label{beta-diff-n}
 \end{eqnarray}
 where $\hat{c} := \tilde{c} \, \bar{c}  \min \{1/L, \sqrt{\beta_0}\} /(1+\bar{c}) > 0$,
 $\bar{c}$ and $\tilde{c}$ are constants given in \eqref{interval-ratio} and \eqref{beta-diff},
 respectively.
 On the other hand,  if $k \in [k_i, m_i-1]$, similar to \eqref{beta-diff-n} we can show
 \begin{equation}\label{beta-diff-n1}
 \sqrt{\beta_k} - \sqrt{\beta_{k_i}} \ge \tilde{c} \min \{1/L, \sqrt{\beta_0}\} (k-k_i) \ge \hat{c}\, (k-k_i).
 \end{equation}
 If $|\mathcal{K}| = \infty$, given any $n \ge 1$, it follows from
  property (c) that $n \in [k_i, k_{i+1} - 1]$ for certain $i \ge 1$.
  Hence, by \eqref{beta-diff-n}, \eqref{beta-diff-n1} and $0 < \beta_j < \beta_{j+1}$ for any $j \ge 1$, we have
  \begin{eqnarray}\label{beta-rate}
  \sqrt{\beta_n} &=& \sqrt{\beta_n} - \sqrt{\beta_{k_i}} +
   \sum_{j=1}^{i-1} (\sqrt{\beta_{k_{j+1}}} - \sqrt{\beta_{k_j}}) + \sqrt{\beta_{k_1}} \nonumber \\
   &=& \sqrt{\beta_n} - \sqrt{\beta_{k_i}} +
   \sum_{j=1}^{i-1} \left[\Big(\sqrt{\beta_{k_{j+1}-1}} - \sqrt{\beta_{k_j}}\Big)
   + \Big(\sqrt{\beta_{k_{j+1}}} - \sqrt{\beta_{k_{j+1}-1}}\Big)\right]  + \sqrt{\beta_{k_1}} \nonumber\\
  & \ge & \sqrt{\beta_n} - \sqrt{\beta_{k_i}} +
   \sum_{j=1}^{i-1} \Big(\sqrt{\beta_{k_{j+1}-1}} - \sqrt{\beta_{k_j}}\Big)  \nonumber \\
   & \ge & \hat{c} \Big[ (n-k_i) + \sum_{j=1}^{i-1} (k_{j+1} - k_j)  \Big] = \hat{c}\,   (n - 1).
  \end{eqnarray}
 If $|\mathcal{K}| < \infty$, by property (c),  for any $k \ge k_{|\mathcal{K}|}$ we have
 $\sqrt{\beta_{k}} \tau_k \ge 1/L$ and thus
 \ben
 \sqrt{\beta_k} - \sqrt{\beta_{k_{|\mathcal{K}|}}}
 \ge \hat{c} (k-k_{|\mathcal{K}|}).
 \een
  This together with \eqref{beta-diff-n} 
  and property (c) also implies \eqref{beta-rate} holds
  for any $n \ge 1$. Then, conclusion (ii) follows from \eqref{beta-rate}.

(iii). Note that for any $\sqrt{\beta_p} \tau_p \ge \sqrt{\beta_q} \tau_q $ with $p \le q$, we have from $\beta_p \le \beta_q$ that $\tau_p \ge \tau_q$. So, based on property (b) in (ii) and \eqref{interval-ratio}, we have the property:
 \ben
 \mbox{(b') $\tau_p \ge \tau_q$ for all $p \in [k_i, m_i-1]$, $q \in [m_i, k_{i+1}-1]$
 and therefore $\sum_{n=k_i}^{m_i-1} \tau_n \ge \bar{c} \sum_{n=m_i}^{k_{i+1}-1} \tau_n$},
 \een
 where $\bar{c} >0$ is the constant in \eqref{interval-ratio}.

Let $\mathcal{K} \subset \mathcal{Z}^+$ be the set given in the proof of (ii).
If $|\mathcal{K}| = \infty$, then for any $N \ge 1$ it follows from property (c) that $N \in [k_j, k_{j+1} - 1]$ for certain $j \ge  1=k_1$. Hence, we have from property  (b') that
 \begin{eqnarray*}
  \sum_{n=1}^N \tau_n
  &=& \sum_{n=k_j}^N \tau_n + \sum_{s=1}^{j-1}    \sum_{n=k_s}^{k_{s+1}-1} \tau_n
   =  \sum_{n=k_j}^N \tau_n + \sum_{s=1}^{j-1}  \left( \sum_{n=k_s}^{m_s-1} \tau_n  + \sum_{n=m_s}^{k_{s+1}-1} \tau_n \right) \\
  &\le & (1+1/\bar{c}) \sum_{n=k_j}^{\min\{N, m_j-1\}} \tau_n +   \sum_{s=i}^{j-1} \left(\sum_{n=k_s}^{m_s-1} \tau_n +  1/\bar{c} \sum_{n=k_s}^{m_s-1} \tau_n \right) \\
  &=& (1+1/\bar{c}) \left( \sum_{n=k_j}^{\min\{N, m_j-1\}} \tau_n  +   \sum_{s=1}^{j-1} \sum_{n=k_s}^{m_s-1} \tau_n \right)
   = (1+1/\bar{c}) \sum_{n\in \mathcal{S}_N} \tau_n.
 \end{eqnarray*}
 When $|\mathcal{K}| < \infty$, we can also similarly prove that
  $\sum_{n=1}^N \tau_n \le (1+1/\bar{c}) \sum_{n\in \mathcal{S}_N} \tau_n$ for any $N\geq 1$, because $\sqrt{\beta_{n}} \tau_n \ge 1/L$ for all $n \geq k_{|\mathcal{K}|}$.
The proof is completed by letting $\tilde{c} = 1 + 1/\bar{c}$.
\end{proof}

\end{appendix}

\end{document}